\def\N{{\mathbb N}}
\def\Z{{\mathbb Z}}
\def\Qq{{\mathbb Q}}
\def\R{{\mathbb R}}
\def\C{{\mathbb C}}
\def\H{{\mathfrak H}}
\def\sq{\hbox{\rlap{$\sqcap$}$\sqcup$}}
\def\qed{\ifmmode\sq\else{\unskip\nobreak\hfil
         \penalty50\hskip1em\null\nobreak\hfil\sq
         \parfillskip=0pt\finalhyphendemerits=0\endgraf}\fi}
\def\smat#1#2#3#4{\left(\begin{smallmatrix}#1&#2\\#3&#4\end{smallmatrix}\right)}
\def\M{{\mathcal M}}
\def\N{{\mathcal N}}
\newtheorem{theorem}{Theorem}
\newtheorem{lemma}[theorem]{Lemma}
\newtheorem{prop}[theorem]{Proposition}
\newtheorem{df}[theorem]{Definition}
\numberwithin{theorem}{section}
\numberwithin{equation}{section}
\title[
A certain Dirichlet series of Rankin-Selberg type
]{
A certain Dirichlet series of Rankin-Selberg type
associated with the Ikeda lift of half-integral weight
}
\author[S.~Hayashida]{Shuichi Hayashida
}
\date{\today}
\keywords{Siegel modular forms, Lifting, Half-integral weight}
\subjclass[2010]{11F46 (primary), 11F37, 11F50 (secondary)}
\begin{document}

\begin{abstract}
In this article we obtain an explicit formula for certain Rankin-Selberg  type
Dirichlet series associated to certain Siegel cusp forms of half-integral weight.
Here these Siegel cusp forms of half-integral weight are obtained
from the composition of the Ikeda lift and the Eichler-Zagier-Ibukiyama
correspondence.
The integral weight version of the main theorem had been obtained by
Katsurada and Kawamura.
The result of the integral weight case is a
product of $L$-function and Riemann zeta functions,
while half-integral weight case
is a infinite summation over negative fundamental discriminants
with certain infinite products.
To  calculate explicit formula of
such Rankin-Selberg type Dirichlet series,
we use a generalized Maass relation
and adjoint maps of index-shift maps of Jacobi forms.
\end{abstract}

\maketitle

\section{Introduction}\label{s:intro}

The purpose of this paper is to show a certain formula for Dirichlet series
of Rankin-Selberg type of certain Siegel cusp forms of \textit{half-integral weight}.

Let $2n$ and $k$ be positive even integers such that $k > 2n + 1$.
Let $g$ be a cusp form of weight $k-n+\frac12$ in the Kohnen plus-space.
Let $f$ be a normalized Hecke eigenform of elliptic cusp form of weight $2k-2n$
which corresponds to $g$ by the Shimura correspondence.
Let $F$ be the Siegel cusp form of weight $k$ of degree $2n$ which is the Ikeda lift
(the Duke-Imamo\={g}lu-Ikeda lift) of $g$.
In~\cite{KaKa:Dirichlet} Katsurada and Kawamura obtained the identity
\begin{eqnarray*}
  \zeta(2s - 2k + 4n) \sum_{N=1}^\infty \frac{\langle \psi_N, \psi_N \rangle}{N^s} &=& \langle \psi_1, \psi_1 \rangle \zeta(s - k + 1) \zeta(s - k + 2n) L(s,f),
\end{eqnarray*}
where $\psi_\N$ denotes the $N$-th Fourier-Jacobi coefficient of $F$ and
$\langle \psi_N, \psi_N \rangle$ denotes the Petersson inner product of $\psi_N$.
Here $\zeta(s) $ denotes the Riemann zeta function and $L(s,f)$ denotes the usual $L$-function
of $f$.
In the case of $n=1$ the above formula had been obtained by Kohnen and Skoruppa in \cite{KoSk}.

Let $G$ be the Siegel cusp form of weight $k-\frac12$ of degree $2n-1$ which is obtained
from the $1$st Fourier-Jacobi coefficient $\psi_1$ by the Eichler-Zagier-Ibukiyama correspondence.
Here the Eichler-Zagier-Ibukiyama correspondence is the linear isomorphism
between Jacobi forms of index 1 and the generalized plus-space of Siegel modular forms
(see~\cite{Ib}).
The form $G$ belongs to the generalized plus-space.
We remark that if $n=1$, then $G = g$.
Let $\phi_m$ be the $m$-th Fourier-Jacobi coefficient of $G$ for any natural number $m$.
(See \S\ref{s:maass_relation}).
We remark that $\phi_m$ is a Jacobi cusp form of weight $k-\frac12$ of index $m$ of degree $2n-2$
and $\phi_m$ is identically $0$ unless $-m \equiv 0, 1 \!\! \mod 4$.
We denote by $\langle \phi_m, \phi_m \rangle$ the Petersson inner product  of $\phi_m$.
(See \S\ref{ss:def_jacobi_half_weight} for the definition).
If $n=1$, then $\phi_m$ is the $m$-th Fourier coefficient of $g$ and we put
$\langle \phi_m, \phi_m \rangle := |\phi_m|^2$ in this case.

The aim of this paper is to show the following theorem.
\begin{theorem}\label{th:main_dshalf}
Let $f$ be a normalized Hecke eigenform of elliptic cusp form of weight $2k-2n$.
Let the symbols be as above. We have
 \begin{eqnarray*}
    &&
    \hspace{-0.5in}
     \sum_{\begin{smallmatrix} m \in \Z_{>0} \\ -m \equiv 0,1 \!\! \mod 4 \end{smallmatrix}}
      \frac{\langle  \phi_m , \phi_m  \rangle}{m^{s + k - n - \frac12}} \\
   &=&
     \zeta(2s - 2n + 2)
    \, \zeta(4s)^{-1}
    \, L(2s, f, Ad) \\
    &&
    \times \sum_{D_0}  \frac{\langle \phi_{|D_0|}, \phi_{|D_0|} \rangle }{{|D_0|}^{s+k-n-\frac12}}
         \prod_{p \nmid |D_0|}
    \left\{
       1 + p^{-2s - 1}
     - \frac{\left( \frac{D_0}{p} \right)
     \left( 1 + p^{2n-2} \right)
     p^{-k+1} a_f(p) }{
      1 + p^{2s}
     }
     \right\}
 \end{eqnarray*}
 for sufficient large $\mbox{Re}(s)$,
 where $D_0 < 0$ runs over all fundamental discriminants
 and $a_f(p)$ denotes the $p$-th Fourier coefficient of $f$.
 Here $L(s,f,Ad)$ denotes the adjoint L-function of $f$:
 \begin{eqnarray*}
   L(s,f,Ad)
   &:=&
   \prod_p 
   \left\{ \left(1 - p^{-s} \right) \left( 1 - \alpha_p^2 p^{-s}\right) \left( 1 - \alpha_p^{-2} p^{-s}\right) \right\}^{-1},
 \end{eqnarray*}
 where $\left\{ \alpha_p^{\pm} \right\}$ are complex values determined by
 the identity 
 \begin{eqnarray*}
   a_f(p) &=& \left(\alpha_p + \alpha_p^{-1} \right) p^{k-n-\frac12}.
 \end{eqnarray*}
\end{theorem}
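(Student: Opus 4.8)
The plan is to reorganize the Dirichlet series on the left according to the factorization of each admissible index into its fundamental and square parts, and then to evaluate the resulting inner sums by means of the generalized Maass relation together with the adjoint index-shift maps. Since $-m \equiv 0,1 \!\!\pmod 4$ means that $-m$ is a discriminant, I would write $m = |D_0|\,\ell^2$ uniquely with $D_0 < 0$ a fundamental discriminant and $\ell \in \Z_{>0}$. This splits the left-hand side as
\begin{eqnarray*}
\sum_{D_0 < 0 \text{ fund.}} \frac{1}{|D_0|^{s+k-n-\frac12}} \sum_{\ell=1}^\infty \frac{\langle \phi_{|D_0|\ell^2}, \phi_{|D_0|\ell^2}\rangle}{\ell^{2(s+k-n-\frac12)}},
\end{eqnarray*}
so that the problem reduces to evaluating, for each fixed fundamental $D_0$, the inner Dirichlet series in $\ell$ and matching it, prime by prime, against the right-hand side. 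Note that the appearance of $\ell^2$ is exactly what converts the natural variable into $2s$ and produces the arguments $2s$, $2s-2n+2$, $4s$ seen in the zeta and $L$-factors.

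The first key step is to invoke the generalized Maass relation (from \S\ref{s:maass_relation}) to express $\phi_{|D_0|\ell^2}$ as the image of $\phi_{|D_0|}$ under a suitable composition of index-shift maps on Jacobi forms of half-integral weight. Because $G$ is built from the Hecke eigenform $f$ through the Ikeda lift and the Eichler-Zagier-Ibukiyama correspondence, these images are governed by the Satake parameters $\alpha_p^{\pm 1}$ of $f$. The second, and decisive, step is to compute the Petersson norm $\langle \phi_{|D_0|\ell^2}, \phi_{|D_0|\ell^2}\rangle$ by passing the index-shift maps to their adjoints: writing the relevant operator as $V$, I would use $\langle V\phi_{|D_0|}, V\phi_{|D_0|}\rangle = \langle \phi_{|D_0|}, V^{*} V \phi_{|D_0|}\rangle$ together with the fact that the lift structure makes $\phi_{|D_0|}$ a simultaneous eigenvector for the relevant index-preserving Hecke operators. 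This yields a scalar $c(D_0,\ell)$, polynomial in $\alpha_p^{\pm 1}$ and $p^{\pm 1}$ and determined prime-by-prime, with $\langle \phi_{|D_0|\ell^2}, \phi_{|D_0|\ell^2}\rangle = c(D_0,\ell)\,\langle \phi_{|D_0|}, \phi_{|D_0|}\rangle$.

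Once $c(D_0,\ell)$ is in hand the remaining work is formal: since the adjoint eigenvalues are multiplicative in $\ell$, the inner sum $\sum_{\ell} c(D_0,\ell)\,\ell^{-2(s+k-n-\frac12)}$ factors as an Euler product $\prod_p L_p$. For $p \nmid |D_0|$ the local factor $L_p$ should, after clearing the generic denominators, reproduce the bracketed term $\{1 + p^{-2s-1} - (\frac{D_0}{p})(1+p^{2n-2})p^{-k+1}a_f(p)(1+p^{2s})^{-1}\}$ multiplied by the local Euler factors of $\zeta(2s-2n+2)\,\zeta(4s)^{-1}\,L(2s,f,Ad)$, while for $p \mid |D_0|$ the local factor should reduce to exactly those global Euler factors (the bracketed product being omitted). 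Collecting over all primes then gives the stated identity, with the infinite sum over $D_0$ surviving precisely because in half-integral weight the norms $\langle \phi_{|D_0|}, \phi_{|D_0|}\rangle$ at distinct fundamental discriminants encode varying central $L$-values and so cannot be extracted, in contrast to the integral weight case of Katsurada and Kawamura.

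The main obstacle I anticipate is the explicit evaluation of the adjoint composition $V^{*}V$ on the half-integral weight Jacobi cusp form $\phi_{|D_0|}$: the metaplectic structure and the generalized plus-space conditions make the relevant Hecke algebra and its adjoint action substantially more delicate than in the integral-weight situation, and it is there that the nontrivial factors $1+p^{2n-2}$ and $(1+p^{2s})^{-1}$, as well as the appearance of the adjoint $L$-function $L(2s,f,Ad)$ in place of a plain product of $L$-functions, must emerge from the prime-by-prime bookkeeping.
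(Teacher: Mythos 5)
Your global architecture coincides with the paper's: the decomposition $m = |D_0|\ell^2$, the use of the generalized Maass relation (Proposition~\ref{prop:maass_relation_D}) to write $\phi_{|D_0|\ell^2}$ as $\phi_{|D_0|}|D_{2n-2}(\ell^2)$, the passage to adjoints, and the final Euler-product bookkeeping are exactly the paper's steps, and your prediction of how the local factors split between $p\nmid|D_0|$ and $p\mid |D_0|$ is accurate. However, at the decisive step you assert that $\langle \phi_{|D_0|\ell^2},\phi_{|D_0|\ell^2}\rangle = c(D_0,\ell)\langle\phi_{|D_0|},\phi_{|D_0|}\rangle$ follows from ``the fact that the lift structure makes $\phi_{|D_0|}$ a simultaneous eigenvector for the relevant index-preserving Hecke operators,'' and this is a genuine gap: no such eigenvector property is available as an input. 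The operators actually in play ($U_N^*$, $\tilde{V}^*_{\alpha,n'-\alpha}(p^2)$) change the index, the plus-spaces $J^{+(2n-2)}_{k-\frac12,m}$ are not one-dimensional, and the scalar action of $D_{2n-2}(N^2)D^*_{2n-2}(N^2)$ on $\phi_{|D_0|}$ is a \emph{conclusion} of the paper's technical apparatus, not a structural fact one can invoke.

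Concretely, two ingredients are missing from your outline. First, the reduction of $D_{2n-2}(N^2)D_{2n-2}^*(N^2)$ to computable pieces requires the identity $D^*_{n'}(N^2)=D_{n'}(N^2)U^*_{N^2}$ (Proposition~\ref{prop:adjoint_D}) combined with the Hecke-algebra composition law of Lemma~\ref{lem:DD}(4), whose M\"obius- and $\left(\frac{D_0}{d_1}\right)$-twisted divisor sum is precisely what generates the bracketed factor and the $\zeta(4s)^{-1}$; your appeal to ``multiplicativity'' presupposes this but does not produce it. Second, and more seriously, after Lemma~\ref{lem:DD}(4) and the Maass relation one is left needing the scalar in $\phi_{mM^2}|U^*_M = M^{k-n-\frac12}\prod_{p|M}\Psi^{(2n-2)}_p(p^\nu,\alpha_p)\,\phi_m$ (Lemma~\ref{lem:phi_U*}). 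The paper obtains this not by any eigenvector argument but by an Eisenstein-series comparison: the action of $U^*_N$ on the Fourier--Jacobi coefficients $e^{(n')}_{k-\frac12,mN^2}$ of the generalized Cohen--Eisenstein series is computed explicitly (Proposition~\ref{prop:e_u_tilde}, resting on B\"ocherer's decomposition in Lemma~\ref{lem:e_E_half}, the recursion for $g_k$ in Lemma~\ref{lem:gk}, and $E^{(n')}_{k-\frac12,mN^2}|U^*_N = N^{-n'}E^{(n')}_{k-\frac12,m}$ in Lemma~\ref{lem:E_U_N}), and the resulting polynomial identity in $p^{k'-n-\frac12}$, valid for infinitely many weights $k'$, is then specialized at the Satake parameter $\alpha_p$ by the standard Ikeda-lift argument. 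Without this transfer mechanism your scalar $c(D_0,\ell)$ --- in particular the factor $\Psi_p^{(2n-2)}$ carrying the crucial $\left(\frac{D_0}{p}\right)$-terms --- has no derivation, so the proposal, while structurally aligned with the paper, does not close at its central computation.
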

In the case of $n=1$ the above formula coincides with the formula 
in~\cite[p.182, l.5]{KoZa}
with a modification that the function
$\zeta(2s) \zeta(4s)^{-1} L(2s,f,Ad)$ should be multiplied in the left hand side in
~\cite[p.182, l.5]{KoZa}.

To obtain main theorem,
a generalization of the Maass relation (Proposition~\ref{prop:maass_relation_D})
plays an important rule. 
This generalization of the Maass relation had been shown essentially in~\cite[Theorem 8.2]{CE_N}.
We also need calculations of adjoint maps of index-shift maps.

We remark that we have analytic properties of the above Dirichlet series
by using Rankin-Selberg method for generalized plus-space which is shown in~\cite[Corollary 3.4]{RSJacobi}.
We put
\begin{eqnarray*}
  \mathcal{R}_1(G;s)
  &:=&
  \pi^{-2s} \Gamma(s+k-n-\frac12) \Gamma(s+n-1) \zeta(2s+2n-2) \\
  && \times
       \sum_{\begin{smallmatrix} m \in \Z_{>0} \\ -m \equiv 0,1 \!\! \mod 4 \end{smallmatrix}}
      \frac{\langle  \phi_m , \phi_m  \rangle}{m^{s+k-n-\frac12}}.
\end{eqnarray*}
Then $\mathcal{R}_1(G;s)$ has meromorphic continuation to the whole complex plane
and satisfies the functional equation
\begin{eqnarray*}
  \mathcal{R}_1(G;s) &=&  \mathcal{R}_1(G;1-s).
\end{eqnarray*}
The function $\mathcal{R}_1(G;s)$ is entire except for $s = n$ and $s = 1-n$.
The residue at $s = n$ is
\begin{eqnarray*}
  \mbox{Res}_{s = n} \mathcal{R}_1(G;s)
  &=& (1 + \delta_{n,1})^{-1} 2^{2k-1}\pi^{k-\frac32} \langle G, G \rangle,
\end{eqnarray*}
where we put $\delta_{n,1} := 1$ if $n=1$ and $\delta_{n,1} := 0$ if $n > 1$,
and $\langle G, G \rangle$ denotes the Petersson inner product of $G$.
Moreover, an explicit formula for $\langle G, G \rangle$ is shown in~\cite{KaKa}.
To describe the value $\langle G, G \rangle$ we prepare some symbols.

We put $\Gamma_\C(s) := 2(2\pi)^{-s} \Gamma(s)$
and put $\tilde{\xi}(s) := \Gamma_\C(s) \zeta(s)$.
We set
\begin{eqnarray*}
  \tilde{\Lambda}(s,f,Ad)
  &:=&
  \Gamma_\C(s) \Gamma_\C(s+2k-2n-1) L(s,f,Ad).
\end{eqnarray*}
Then it is known in~\cite{KaKa} that 
\begin{eqnarray*}
  \langle G, G \rangle
  &=& (1 + \delta_{n,1}) 2^{-6k(n-1)+n(2n-3)} \langle g, g \rangle
  \prod_{i=1}^{n-1} \tilde{\xi}(2i) \tilde{\Lambda}(2i+1,f,Ad).
\end{eqnarray*}
Therefore the residue of $\mathcal{R}_1(G;s)$ at $s = n$ is
\begin{eqnarray*}
  \mbox{Res}_{s=n} \mathcal{R}_1(G;s)
  &=&
  2^{-2k(3n-4)+n(2n-3)-1} \pi^{k-\frac32} \langle g, g \rangle \prod_{i=1}^{n-1} \tilde{\xi}(2i) \tilde{\Lambda}(2i+1,f,Ad).
\end{eqnarray*}

Remark also 
that the infinite product
\begin{eqnarray*}
        \prod_{p \nmid |D_0|}
    \left\{
       1 + p^{-2s - 1}
     - \frac{\left( \frac{D_0}{p} \right)
     \left( 1 + p^{2n-2} \right)
     p^{-k+1} a_f(p) }{
      1 + p^{2s}
     }
     \right\}
\end{eqnarray*}
in Theorem~\ref{th:main_dshalf}
appears in a formula of
a certain two variable Dirichlet series $\mathscr{L}_{-1}(f;\lambda,s)$ associated to $f$ (cf. \cite[p.225]{IbKa}).

\ \\

This article is organized as follows.
In \S\ref{s:notation} we prepare some symbols. We also recall definitions of Jacobi forms
and the index shift maps of Jacobi forms.
In \S\ref{s:Ikeda_lift} we recall Ikeda lifts and construct Siegel modular forms of
half-integral weight.
In \S\ref{s:maass_relation} we review a generalization of the Maass relation for
Siegel modular forms of half-integral weight. We also introduce a index shift map
$D_{2n-2}(N)$ for Jacobi forms of half-integral weight.
In \S\ref{s:lin_iso_jacobi} we review a linear isomorphism between
Jacobi forms of integral weight and half-integral weight.
In \S\ref{s:adjoint_maps} we give a formula for the adjoint map $D^*_{n'}(N)$
of $D_{n'}(N)$ with respect to the Petersson inner product.
In \S\ref{s:Jacobi_Eisenstein} we review the Fourier-Jacobi coefficients
of the generalized Cohen-Eisenstein series and Jacobi-Eisenstein series.
We also calculate the image of them by an adjoint map $U^*_N$ of a certain index-shift map
$U_N$.
Finally, in \S\ref{s:proof_of_main_theorem} we will give a proof of Theorem~\ref{th:main_dshalf}.

\ \\

\noindent
Acknowledgement:

The author would like to express his sincere thanks to Professor Hidenori Katsurada
for his variable comments.
This work was supported by JSPS KAKENHI Grant Number 80597766.

\section{Notation and definitions}\label{s:notation}
We denote by $\Z_{>0}$ (resp. $\R_{>0}$) the set of all positive integers
(resp. positive real numbers).
The symbol 
$R^{(n,m)}$ denotes the set of $n\times m$ matrices with entries in a ring $R$.
The symbol $L_n^*$ denotes the set of all semi positive-definite, half-integral symmetric matrices
of size $n$,
and the symbol
$L_n^+$ denotes the set of all positive definite, half-integral symmetric matrices of size $n$.
The transpose of a matrix $B$ is denoted by $^t B$.
We write $A[B]$ $:=$ ${^t B} A B $ for two matrices $A \in R^{(n,n)}$ and $B \in R^{(n,m)}$ .
We write the identity matrix (resp. zero matrix) of size $n$ by $1_n$ (resp. $0_n$).
We denote by $\mbox{tr}(S)$ the trace of a square matrix $S$
and we write $e(S) := e^{2 \pi \sqrt{-1}\, \mbox{tr}(S)}$ for a square matrix $S$.
For square matrices $a_1$, ..., $a_n$, we denote by
$\mbox{diag}(a_1,...,a_n)$
 the diagonal matrix
$\left(\begin{smallmatrix} a_1 & & \\ &\ddots & \\ & & a_n\end{smallmatrix}\right)$.
The symbol $p$ is reserved for prime number.
For any odd prime $p$ the symbol $\left(\frac{*}{p}\right)$ denotes the Legendre symbol.
If $p = 2$, then we denote by $\left(\frac{d}{2}\right) = 1$, $-1$ or $0$
for $d \equiv \pm 1 \!\! \mod 8$, $d \equiv \pm 3 \!\! \mod 8$ or $d \equiv 0 \mod 2$,
respectively.

We denote by $\mathfrak{H}_n$ the Siegel upper half space of degree $n$
and  denote by $\mbox{Sp}_n(\R)$ the real symplectic group of size $2n$.
We set $\Gamma_n$ := $\mbox{Sp}_n(\Z)$.
We put $\Gamma_0^{(n)}(4) := \left. \left\{ \begin{pmatrix} A & B \\ C & D \end{pmatrix} \in \Gamma_n
\, \right| \, C \in 4 \Z^{(n,n)} \right\}$
and put $ \Gamma_{\infty}^{(n)}
 :=
 \left.
 \left\{
  \begin{pmatrix} A & B \\ C & D \end{pmatrix} \in \Gamma_n
 \, \right| \,
  C = 0_n 
 \right\}
$.
We denote by
$M_{k-\frac12}^{(n)}$  the vector space of Siegel modular forms of weight $k-\frac12$ of
$\Gamma_0^{(n)}(4)$.
We put $M_{k-\frac12}^{+(n)}$
the plus-space of weight $k-\frac12$ of degree $n$,
which is a certain subspace of $M_{k-\frac12}^{(n)}$
and it is a generalization of Kohnen plus-space for general degree (cf. Ibukiyama~\cite{Ib},
see also \S\ref{s:lin_iso_jacobi}).
The symbol
$S_{k-\frac12}^{+(n)}$ denotes the vector space of all Siegel cusp forms in $M_{k-\frac12}^{+(n)}$.

In the following we quote some symbols and definitions from~\cite{CE_N}.
\subsection{Jacobi group}\label{ss:Jacobi_group}
For a positive integer $n$ we define the group
\begin{eqnarray*}
 \mbox{GSp}_n^+(\R)
 \!\!\!\!\!
 &:=& \!\!\!\!\!
 \left\{
  g \in \R^{(2n,2n)} \, 
  | \, 
  g \left(\begin{smallmatrix}
     0_n & -1_n \\ 1_n & 0_n 
   \end{smallmatrix}\right)
  {^t g} 
  = n(g) 
  \left(\begin{smallmatrix}
     0_n & -1_n \\ 1_n & 0_n 
   \end{smallmatrix}\right)
  \mbox{ for some } n(g) \in \R_{>0}
 \right\}.
\end{eqnarray*}
For a matrix $g \in \mbox{GSp}_n^+(\R)$, the number $n(g)$ in the above definition of $\mbox{GSp}_n^+(\R)$ is called the \textit{similitude} of the matrix $g$.

For positive integers $n$ and $r$,
we define a subgroup $G_{n,r}^J \subset \mbox{GSp}_{n+r}^+(\R)$ by
\begin{eqnarray*}
 G_{n,r}^J 
 &:=&
 \left\{
 \left.
  \begin{pmatrix}
   A &   & B &  \\
     & U &   &  \\
   C &   & D &  \\
     &   &   & V
\end{pmatrix}\begin{pmatrix}
   1_n &   &   & \mu  \\
   ^t \lambda  & 1_r & ^t \mu  & {^t \lambda} \mu + \kappa  \\
     & & 1_n & - \lambda  \\
     &   &   & 1_r
\end{pmatrix} 
\in \mbox{GSp}_{n+r}^+(\R)
 \right|
 \begin{matrix}
 A, B, C, D, \\
 U, V, \\
 \lambda, \mu, \kappa
 \end{matrix}
\right\},
\end{eqnarray*}
where
$\begin{pmatrix}
  A&B\\C&D
 \end{pmatrix}
 \in \mbox{GSp}_n^+(\R)$,
$\begin{pmatrix}
  U&0\\0&V
 \end{pmatrix}
 \in \mbox{GSp}_r^+(\R)$,
$\lambda, \mu \in \R^{(n,r)}$
and
$\kappa = {^t \kappa}\in \R^{(r,r)}$.
We remark that two matrices $\left(\begin{smallmatrix}A&B\\C&D\end{smallmatrix}\right)$ and
$\left(\begin{smallmatrix}U&0\\0&V\end{smallmatrix}\right)$
in the above notation
have the same similitude.
We abbreviate an element
 $\left(\begin{smallmatrix}
    A &   & B &  \\
      & U &   &  \\
    C &   & D &  \\
      &   &   & V
 \end{smallmatrix}\right)
 \left(\begin{smallmatrix}
   1_n &   &   & \mu  \\
   ^t \lambda  & 1_r & ^t \mu  & {^t \lambda}\mu + \kappa  \\
     & & 1_n & - \lambda  \\
     &   &   & 1_r
 \end{smallmatrix}\right)$
as
\[
\left(\begin{pmatrix}A&B\\C&D\end{pmatrix}
  \times
  \begin{pmatrix}U&0\\0&V\end{pmatrix},
  [
   (\lambda,\mu),\kappa
  ]
  \right) .
\]
We will often write
\begin{eqnarray*}
\left(\left(\begin{matrix}A&B\\C&D\end{matrix}\right),
  [
   (\lambda,\mu),\kappa
  ]
  \right)
\end{eqnarray*}
instead of
 $\left(\left(\begin{smallmatrix}A&B\\C&D\end{smallmatrix}\right)
  \times
  1_{2r},
  [
   (\lambda,\mu),\kappa
  ]
  \right)$
for simplicity.
The element
 $\left(\left(\begin{smallmatrix}A&B\\C&D\end{smallmatrix}\right),
  [
   (\lambda,\mu),\kappa
  ]
  \right)$
belongs to $\mbox{Sp}_{n+r}(\R) $.
Similarly, we abbreviate an element
\begin{eqnarray*}
 \left(\begin{smallmatrix}
   1_n &   &   & \mu  \\
   ^t \lambda  & 1_r & ^t \mu  & {^t \lambda}\mu + \kappa  \\
     & & 1_n & - \lambda  \\
     &   &   & 1_r
 \end{smallmatrix}\right)
 \left(\begin{smallmatrix}
    A &   & B &  \\
      & U &   &  \\
    C &   & D &  \\
      &   &   & V
 \end{smallmatrix}\right)
\end{eqnarray*}
as
\begin{eqnarray*}
 \left(
  [
   (\lambda,\mu),\kappa
  ],
  \left(\begin{matrix}A&B\\C&D\end{matrix}\right)
  \times
  \left(\begin{matrix}U&0\\0&V\end{matrix}\right)
 \right),
\end{eqnarray*}
and abbreviate it as
 $
 \left(
  [
   (\lambda,\mu),\kappa
  ],
  \left(\begin{smallmatrix}A&B\\C&D\end{smallmatrix}\right)
 \right)
 $
for the case $U = V = 1_r$ .

If there is no confusion, we write 
\[
[(\lambda,\mu),\kappa]
\]
for the element $(1_{2n},[(\lambda,\mu),\kappa])$ for simplicity.

We set a subgroup  of $G_{n,r}^J$ by
\begin{eqnarray*}
 \Gamma_{n,r}^J 
  &:=&
 \left\{
  \left(M,
  [
   (\lambda,\mu),\kappa
  ]
  \right) 
   \in G_{n,r}^J
  \, \left| \,
  M \in \Gamma_n ,
  \lambda, \mu \in \Z^{(n,r)}, \kappa \in \Z^{(r,r)}
 \right\} \right. .
\end{eqnarray*}

\subsection{Groups $\widetilde{\mbox{GSp}_n^+(\R)}$
and $\widetilde{G_{n,1}^J}$}\label{ss:double_covering_groups}

The symbol $\widetilde{\mbox{GSp}_n^+(\R)}$
denotes the group
which consists of pairs $(M,\varphi(\tau))$,
where $M$ is a matrix $M = \left(\begin{smallmatrix} A&B\\C&D \end{smallmatrix}\right)\in \mbox{GSp}_n^+(\R)$,
and where $\varphi$ is a holomorphic function on $\mathfrak{H}_n$
which satisfies $|\varphi(\tau)|^2 = \det(M)^{-\frac12} |\det(C\tau + D)|$.
The group operation on $\widetilde{\mbox{GSp}_n^+(\R)}$ is given by
$(M,\varphi(\tau))(M',\varphi'(\tau)) := (M M', \varphi(M'\tau)\varphi'(\tau))$
for $(M,\varphi)$, $(M',\varphi')$ $\in$ $\widetilde{\mbox{GSp}_n^+(\R)}$.

We denote the theta constant  $ \theta^{(n)}(\tau) 
  :=
   \displaystyle{\sum_{p \in \Z^{(n,1)}} e(\tau[p])}$
for $\tau \in \H_n$.
We embed $\Gamma_0^{(n)}(4)$ into the group $\widetilde{\mbox{GSp}_n^+(\R)}$
via $M \mapsto (M,\theta^{(n)}(M\tau)\, \theta^{(n)}(\tau)^{-1})$.

We denote by $\Gamma_0^{(n)}(4)^*$ the image of $\Gamma_0^{(n)}(4)$
in $\widetilde{\mbox{GSp}_n^+(\R)}$ by this embedding.

We define the group
\begin{eqnarray*}
 H_{n,1}(\R)
 &:=&
 \left\{[(\lambda,\mu),\kappa] \in \mbox{Sp}_{n+1}(\R)
 \, | \,
 \lambda,\mu \in \R^{(n,1)}, \kappa \in \R \right\} .
\end{eqnarray*}
and define the group
\begin{eqnarray*}
 \widetilde{G_{n,1}^J}
 &:=&
 \widetilde{\mbox{GSp}_n^+(\R)} \ltimes H_{n,1}(\R) \\
 &=&
 \left. \left\{(\tilde{M},[(\lambda,\mu),\kappa]) \, \right| \,
   \tilde{M} \in \widetilde{\mbox{GSp}_n^+(\R)},
   [(\lambda,\mu),\kappa] \in H_{n,1}(\R) \right\} 
\end{eqnarray*}
with the group operation
\begin{eqnarray*}
 (\tilde{M_1},[(\lambda_1,\mu_1),\kappa_1])\cdot (\tilde{M_2},[(\lambda_2,\mu_2),\kappa_2])
 &:=&
 (\tilde{M_1}\tilde{M_2},[(\lambda',\mu'),\kappa'])
\end{eqnarray*}
for $(\tilde{M_i},[(\lambda_i,\mu_i),\kappa_i]) \in \widetilde{G_{n,1}^J}$ $(i=1,2)$,
and where $[(\lambda',\mu'),\kappa'] \in H_{n,1}(\R)$ is the matrix determined through the identity
\begin{eqnarray*}
\begin{aligned}
 &
 (M_1\times\left(\begin{smallmatrix}n(M_1)&0\\0&1\end{smallmatrix}\right),[(\lambda_1,\mu_1),\kappa_1])
 (M_2\times\left(\begin{smallmatrix}n(M_2)&0\\0&1\end{smallmatrix}\right),[(\lambda_2,\mu_2),\kappa_2])
\\
 &=
 (M_1M_2 \times \left(\begin{smallmatrix}n(M_1)n(M_2)&0\\0&1\end{smallmatrix}\right),[(\lambda',\mu'),\kappa'])
\end{aligned}
\end{eqnarray*}
in $G_{n,1}^J$.
Here $n(M_i)$ is the similitude of $M_i$.

\subsection{Action of the Jacobi group}
The group $G_{n,r}^J$ acts on $\mathfrak{H}_n \times \C^{(n,r)}$ by
\begin{eqnarray*}
 \gamma \cdot (\tau,z) 
 &:=&
 \left(
  \begin{pmatrix}A&B\\C&D\end{pmatrix} \cdot \tau
 \, ,\,
  ^t(C\tau+D)^{-1}(z + \tau\lambda + \mu)^t U
 \right)
\end{eqnarray*}
for any $\gamma = \left(\left(\begin{smallmatrix}A&B\\C&D\end{smallmatrix}\right)
  \times
  \left(\begin{smallmatrix}U&0\\0&V\end{smallmatrix}\right),
  [
   (\lambda,\mu),\kappa
  ]
  \right) \in G_{n,r}^J$ 
and for any $(\tau,z) \in \mathfrak{H}_n \times \C^{(n,r)}$. 
Here $\begin{pmatrix}A&B\\C&D\end{pmatrix} \cdot \tau := (A\tau+B)(C\tau+D)^{-1}$ 
is the usual transformation.

The group $\widetilde{G_{n,1}^J}$ acts on $\mathfrak{H}_n\times \C^{(n,1)}$
through the projection $\widetilde{G_{n,1}^J} \rightarrow G_{n,1}^J$.
It means $\widetilde{G_{n,1}^J}$ acts on $\mathfrak{H}_n\times \C^{(n,1)}$
by
\begin{eqnarray*}
 \tilde{\gamma}\cdot(\tau,z) 
  &:=& 
  (M\times\left(\begin{smallmatrix}n(M)&0\\0&1\end{smallmatrix}\right),[(\lambda,\mu),\kappa])\cdot(\tau,z)
\end{eqnarray*}
for $\tilde{\gamma} = ((M,\varphi),[(\lambda,\mu),\kappa]) \in \widetilde{G_{n,1}^J}$
and for $(\tau,z) \in \mathfrak{H}_n\times \C^{(n,1)}$.
Here $n(M)$ is the similitude of $M \in \mbox{GSp}_n^+(\R)$.

\subsection{Factors of automorphy}\label{ss:factors_automorphy}
Let $k$ be an integer and let $\mathcal{M}$ be a symmetric matrix of size $r$ with entries in $\R$.
For any
 $\gamma = \left(\left(\begin{smallmatrix}A&B\\C&D\end{smallmatrix}\right)
  \times
  \left(\begin{smallmatrix}U&0\\0&V\end{smallmatrix}\right),
  [
   (\lambda,\mu),\kappa
  ]
  \right) \in G_{n,r}^J$
we define the factor of automorphy
\begin{eqnarray*}
 &&
 \hspace{-1cm}
 J_{k,\mathcal{M}}\left( 
   \gamma, (\tau,z) \right) \\
 &:=& 
  \det(V)^k \det(C\tau+D)^k\, e(V^{-1}\mathcal{M}U (((C\tau+D)^{-1}C)[z + \tau \lambda + \mu]) ) \\
 && \times
  e(- V^{-1} \mathcal{M} U ({^t \lambda} \tau \lambda
     + {^t z} \lambda + {^t \lambda} z + {^t \mu} \lambda + {^t \lambda}\mu + \kappa)).
\end{eqnarray*}
We define the slash operator $|_{k,\mathcal{M}}$ by
\begin{eqnarray*}
 (\psi|_{k,\mathcal{M}}\gamma)(\tau,z) 
 &:=& 
 J_{k,\mathcal{M}}(\gamma,(\tau,z))^{-1} \psi(\gamma\cdot(\tau,z))
\end{eqnarray*}
for any function $\psi$ on $\mathfrak{H}_n \times \C^{(n,r)}$ 
and for any $\gamma \in G_{n,r}^J$.
We remark that
\begin{eqnarray*}
 J_{k,\mathcal{M}}(\gamma_1 \gamma_2, (\tau,z))
 &=&
 J_{k,\mathcal{M}}(\gamma_1, \gamma_2 \cdot (\tau,z))
 J_{k,V_1^{-1}\mathcal{M}U_1}(\gamma_2, (\tau,z)),
\\
 \psi|_{k,\mathcal{M}}\gamma_1 \gamma_2 
 &=&
 (\psi|_{k,\mathcal{M}}\gamma_1) |_{k,V_1^{-1}\mathcal{M}U_1}\gamma_2 .
\end{eqnarray*}
for any $\gamma_i = \left(M_i
  \times
  \left(\begin{smallmatrix}U_i&0\\0&V_i\end{smallmatrix}\right),
  [
   (\lambda_i,\mu_i),\kappa_i
  ]
  \right) \in G_{n,r}^J$ $(i =1,2)$.

Let $k$ and $m$ be integers.
For any $\tilde{\gamma} = ((M,\varphi),[(\lambda,\mu),\kappa]) \in \widetilde{G_{n,1}^J}$
we define the factor of automorphy
\begin{eqnarray*} 
 J_{k-\frac12,m}(\tilde{\gamma},(\tau,z))
 &:=&
 \varphi(\tau)^{2k-1} e(n(M) m (((C\tau+D)^{-1}C)[z + \tau \lambda + \mu]) ) \\
 && \times
  e(-  n(M) m ({^t \lambda} \tau \lambda + {^t z} \lambda + {^t \lambda} z + {^t \mu} \lambda + {^t \lambda}\mu + \kappa)),
\end{eqnarray*}
where $n(M)$ is the similitude of $M$.
We define the slash operator $|_{k-\frac12,m}$ by
\begin{eqnarray*}
 \phi|_{k-\frac12,m}\tilde{\gamma}
 &:=&
 J_{k-\frac12,m}(\tilde{\gamma},(\tau,z))^{-1}
 \phi(\tilde{\gamma}\cdot(\tau,z))
\end{eqnarray*}
for any function $\phi$ on $\mathfrak{H}_n\times \C^{(n,1)}$.
We remark that
\begin{eqnarray*}
 J_{k-\frac12,m}(\tilde{\gamma_1}\tilde{\gamma_2},(\tau,z))
 &=&
 J_{k-\frac12,m}(\tilde{\gamma_1},\tilde{\gamma_2}\cdot(\tau,z))
 J_{k-\frac12,n(M_1) m}(\tilde{\gamma_2},(\tau,z))
\\
 \phi|_{k-\frac12,m}\tilde{\gamma_1}\tilde{\gamma_2}
 &=&
 (\phi|_{k-\frac12,m}\tilde{\gamma_1})|_{k-\frac12,n(M_1)m}\tilde{\gamma_2}
\end{eqnarray*}
for any $\tilde{\gamma_i} = ((M_i,\varphi_i),[(\lambda_i,\mu_i),\kappa_i]) \in \widetilde{G_{n,1}^J}$
$(i=1,2)$.

\subsection{Jacobi forms of matrix index}\label{ss:jacobi_forms_of_matrix_index}
We quote the definition of Jacobi forms of matrix index from \cite{Zi}.
\begin{df}
For an integer $k$ and for an matrix $\mathcal{M} \in L_r^+$,
a $\C$-valued holomorphic function $\psi$ on $\mathfrak{H}_n \times \C^{(n,r)}$ is called
\textit{a Jacobi form of weight $k$ of
index $\mathcal{M}$ of degree $n$}, if $\psi$ satisfies the following two conditions:
\begin{enumerate}
\item
the transformation formula
$\psi|_{k,\mathcal{M}} \gamma = \psi$ for any $\gamma \in \Gamma_{n,r}^J$,
\item
$\psi$ has the Fourier expansion:
$ \psi(\tau,z)
 = \!\!\!\!\!
 \displaystyle{
   \sum_{\begin{smallmatrix}N \in Sym_n^*,R \in Z^{(n,r)} \\ 4N - R \mathcal{M}^{-1} {^t R} \geq 0 
          \end{smallmatrix}} \!\!\!\!\! c(N,R) e(N\tau) e({^t R} z)
              }$.
\end{enumerate}
\end{df}
We remark that the second condition follows from the Koecher principle (cf.~\cite[Lemma~1.6]{Zi})
if $n > 1$.
In the condition (2),
if $\psi$ satisfies $c(N,R) = 0$ unless $4N - R \mathcal{M}^{-1} {^t R} > 0$,
then $\psi$ is called a \textit{Jacobi cusp form}.

We denote by $J_{k,\mathcal{M}}^{(n)}$ (resp. $J_{k,\mathcal{M}}^{(n)\, cusp}$)
the $\C$-vector space of Jacobi forms (resp. Jacobi cusp forms) of weight $k$ of index $\mathcal{M}$
of degree $n$.

For $\psi_1$, $\psi_2$ $\in$ $J_{k,\M}^{(n)\, cusp}$,
the Petersson inner product is defined by
\begin{eqnarray*}
  \langle \psi_1, \psi_2 \rangle
  &:=&
  \int_{\mathcal{F}_{n,r}}
    \psi_1(\tau,z) \overline{ \psi_2(\tau,z)} e^{-4\pi Tr( \M v^{-1}[y])} \det(v)^{k - n - r - 1 } 
    \, du\, dv\, dx\, dy,
\end{eqnarray*}
where $\mathcal{F}_{n,r} := \Gamma_{n,r}^J \backslash (\H_n \times \C^{(n,r)}) $,
$\tau = u + i v$, $z = x + i y$,
$du = \prod_{i \leq j} u_{i,j}$,  $d v = \prod_{i \leq j} v_{i,j}$,
$dx = \prod_{i,j} x_{i,j}$ and $dy = \prod_{i,j} y_{i,j}$.

\subsection{Jacobi forms of half-integral weight}\label{ss:def_jacobi_half_weight}

We set a subgroup $\Gamma_{n,1}^{J*}$ of $\widetilde{G_{n,1}^J}$ by
\begin{eqnarray*}
 \Gamma_{n,1}^{J*}
 &:=&
 \left\{
  (M^*,[(\lambda,\mu),\kappa]) \in \widetilde{G_{n,1}^J}
  \, | \,
  M^* \in \Gamma_0^{(n)}(4)^*, \,
  \lambda,\mu \in \Z^{(n,1)}, \kappa \in \Z
 \right\} \\
 &\cong&
  \Gamma_0^{(n)}(4)^* \ltimes H_{n,1}(\Z),
\end{eqnarray*}
where we put $H_{n,1}(\Z) := H_{n,1}(\R) \cap \Z^{(2n+2,2n+2)}$,
and where the group $\Gamma_0^{(n)}(4)^*$ was defined in \S\ref{ss:double_covering_groups}.
\begin{df}
For integers $k$ and $m$,
a holomorphic function $\phi$ on $\mathfrak{H}_n \times \C^{(n,1)}$
is called a \textit{Jacobi form of weight $k-\frac12$ of index $m$ of degree $n$},
if $\phi$ satisfies the following two conditions:
\begin{enumerate}
\item
$\phi|_{k-\frac12,m} \gamma^* = \phi$ for any $\gamma^* \in \Gamma_{n,1}^{J*}$,
\item
$\phi^2|_{2k-1,2m}\gamma$ has the Fourier expansion for any $\gamma \in \Gamma_{n,1}^J$:
\begin{eqnarray*}
 \left(\phi^2|_{2k-1,2m}\gamma\right) (\tau,z)
 &=&
 \sum_{\begin{smallmatrix}
        N \in Sym_n^*,R \in \Z^{(n,1)} \\
        4Nm - h R {^t R} \geq 0
       \end{smallmatrix}}
      C(N,R)\, e\!\left(\frac{1}{h}N\tau\right) e\!\left({^t R} z\right).
\end{eqnarray*}
with a  integer $h > 0$, and
where the slash operator $|_{2k-1, 2m}$ was defined in $\S\ref{ss:factors_automorphy}$.
\end{enumerate}
\end{df}

In the condition (2),
for any $\gamma$ if $\phi$ satisfies $C(N,R) = 0$ unless $4Nm - h R{^t R} > 0$,
then $\phi$ is called a \textit{Jacobi cusp form}.

We denote by $J_{k-\frac12,m}^{(n)}$
the $\C$-vector space of Jacobi forms
of weight $k-\frac12$ of index $m$ of degree $n$.

For $\phi_1$, $\phi_2$ $\in$ $J_{k-\frac12,m}^{(n)\, cusp}$, 
the Petersson inner product is defined by
\begin{eqnarray*}
  \langle \phi_1, \phi_2 \rangle
  &:=&
   \left[ \Gamma_n : \Gamma_0^{(n)}(4) \right]^{-1}
  \int_{\mathcal{F}_{n,1,4}}
    \phi_1(\tau,z) \overline{ \phi_2(\tau,z)} e^{-4\pi m v^{-1}[y]} \det(v)^{k - n - \frac52 } 
    \, du\, dv\, dx\, dy,
\end{eqnarray*}
where $\mathcal{F}_{n,1,4} := \Gamma_{n,1}^J(4) \backslash (\H_n \times \C^{(n,1)}) $,
$\tau = u + i v$, $z = x + i y$,
$du = \prod_{i \leq j} u_{i,j}$,  $d v = \prod_{i \leq j} v_{i,j}$,
$dx = \prod_{i} x_{i,1}$ and $dy = \prod_{i} y_{i,1}$
and $\left[ \Gamma_n : \Gamma_0^{(n)}(4) \right]$ denotes the index of
$\Gamma_0^{(n)}(4)$ in $\Gamma_n$,
and where we put
\begin{eqnarray*}
  \Gamma_{n,1}^J(4)
  &:=&
 \left\{
  (M,[(\lambda,\mu),\kappa]) \in \Gamma_{n,1}^J
  \, | \,
  M \in \Gamma_0^{(n)}(4), \,
  \lambda,\mu \in \Z^{(n,1)}, \kappa \in \Z
 \right\} \\
 &\cong&
  \Gamma_0^{(n)}(4) \ltimes H_{n,1}(\Z).
\end{eqnarray*}

\begin{lemma}\label{lem:cusp_criterium}
 Let $\phi \in J_{k-\frac12,m}^{(n)}$.
 Then $\phi$ is a Jacobi cusp form, if and only if the function
 \begin{eqnarray*}
   \det(v)^{\frac12(k-\frac12)}
   e^{-2\pi m v^{-1}[y] }
   |\phi(\tau,z)|
 \end{eqnarray*}
 is bounded on $\H_{n} \times \C^{n}$.
 Here we put $v = \mbox{Im}(\tau)$ and $y = \mbox{Im}(z)$.
\end{lemma}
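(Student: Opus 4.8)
The plan is to study the non-negative, $\Gamma_{n,1}^{J*}$-invariant function
\[
\Phi(\tau,z) := \det(v)^{\frac12(k-\frac12)}\, e^{-2\pi m v^{-1}[y]}\, |\phi(\tau,z)|,
\]
equivalently its square $\Phi^2 = \det(v)^{k-\frac12}e^{-4\pi m v^{-1}[y]}|\phi|^2$. The first step is to record the $\Gamma_{n,1}^{J*}$-invariance of $\Phi$. This is precisely the statement that the integrand $\phi\,\overline{\phi}\,e^{-4\pi m v^{-1}[y]}\det(v)^{k-n-\frac52}$ of the Petersson product of \S\ref{ss:def_jacobi_half_weight}, multiplied by the invariant volume element $\det(v)^{-(n+2)}du\,dv\,dx\,dy$ of $\H_n\times\C^{(n,1)}$, descends to $\mathcal{F}_{n,1,4}$; concretely it follows from the transformation law $\phi|_{k-\frac12,m}\gamma^*=\phi$ together with $\det(\mathrm{Im}(M\tau))=\det(v)\,|\det(C\tau+D)|^{-2}$ and $|\varphi(\tau)^{2k-1}|=|\det(C\tau+D)|^{k-\frac12}$, the exponential part of $J_{k-\frac12,m}$ cancelling against the change of $e^{-2\pi m v^{-1}[y]}$ so that all weight- and index-factors disappear. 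Since $\Phi$ is bounded if and only if $\Phi^2$ is, and $\Phi$ is invariant, it suffices to bound $\Phi$ on a fundamental domain $\mathcal{F}$ for $\Gamma_{n,1}^{J*}$, which may be chosen inside a Siegel domain so that its only non-compact ends are the finitely many cusps, reached as $v\to\infty$.

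The second step passes to Fourier expansions. Because cuspidality of $\phi$ is defined through $\phi^2$, I would write $\Phi^2=\det(v)^{k-\frac12}e^{-4\pi m v^{-1}[y]}|\phi^2|$ and, at each cusp, insert the expansion of $\phi^2|_{2k-1,2m}\gamma$ from condition (2) of the definition. Taking absolute values, a typical term contributes
\[
|C(N,R)|\,\det(v)^{k-\frac12}\exp\!\Big(-\tfrac{2\pi}{h}\mathrm{tr}(Nv)-2\pi\,{}^tR\,y-4\pi m\, v^{-1}[y]\Big),
\]
and completing the square in $y$ turns the $y$-dependence into a non-positive perfect square plus a negative multiple of $\mathrm{tr}\big((4Nm-hR\,{}^tR)\,v\big)$, where $4Nm-hR\,{}^tR$ is exactly the discriminant matrix occurring in condition (2). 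Thus the rate of growth or decay in $v$ is governed precisely by this discriminant: a term with $4Nm-hR\,{}^tR>0$ decays exponentially and overwhelms the polynomial factor $\det(v)^{k-\frac12}$ as $v\to\infty$ in the Siegel domain, whereas a term with $4Nm-hR\,{}^tR$ only positive semi-definite has a null direction along which it does not decay.

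For the forward implication, suppose $\phi$ is a cusp form. Then at every cusp $C(N,R)=0$ unless $4Nm-hR\,{}^tR>0$, so each surviving term of $\Phi^2$ decays exponentially; hence $\Phi^2\to0$ at each cuspidal end, while $\Phi^2$ is continuous and therefore bounded on the complementary compact part of $\mathcal{F}$. By invariance, $\Phi$ is bounded on all of $\H_n\times\C^{(n,1)}$, as required.

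For the converse I would argue by contraposition. If $\phi$ is not a cusp form, then for some $\gamma$ there is a nonzero coefficient $C(N_0,R_0)$ with $4N_0m-hR_0\,{}^tR_0$ merely positive semi-definite, having a nonzero kernel vector $\xi$. Letting $v\to\infty$ along the degenerate direction $v=v_0+t\,\xi\,{}^t\xi$ with $t\to\infty$, one has $\mathrm{tr}\big((4N_0m-hR_0\,{}^tR_0)\,v\big)$ constant while $\det(v)^{k-\frac12}\to\infty$, so the single $(N_0,R_0)$-term grows without bound. The essential point, and the main obstacle, is to rule out cancellation of this term against the rest of the series. I would handle this by the standard extraction device: integrating $\phi^2|_{2k-1,2m}\gamma$ against the character $e(-\tfrac{1}{h}\mathrm{tr}(N_0u))\,e(-{}^tR_0x)$ over a fundamental parallelotope in the $u,x$ variables isolates $C(N_0,R_0)$, and the triangle inequality then forces $\sup\Phi^2$ to grow at least like $\det(v)^{k-\frac12}$ along the chosen direction, contradicting boundedness. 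The delicate part is the careful choice of the degenerate direction so that every positive-definite contribution stays suppressed, together with the bookkeeping of this Fourier projection in the presence of the vector variable $z$ and at all cusps simultaneously.
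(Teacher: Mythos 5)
Your argument is correct and is in substance the very proof the paper delegates to its references: the paper records no details for this lemma, citing instead the analogues in Klingen~\cite{Kl2} and Dulinski~\cite{Du}, whose proofs run exactly along your lines --- $\Gamma_{n,1}^{J*}$-invariance of $\det(v)^{k-\frac12}e^{-4\pi m v^{-1}[y]}|\phi|^2$, the completed-square estimate for the Fourier expansion of $\phi^2|_{2k-1,2m}\gamma$ at each of the finitely many cusps, and growth along a null direction of a semi-definite matrix $4N_0m-hR_0\,{}^tR_0$ in the converse. Note that the ``main obstacle'' you flag at the end is already disposed of by your own extraction step: integrating $\phi^2|_{2k-1,2m}\gamma$ against $e(-\tfrac1h \mathrm{tr}(N_0u))e(-{}^tR_0x)$ and applying the triangle inequality bounds $|C(N_0,R_0)|\,e^{-\frac{2\pi}{h}\mathrm{tr}(N_0v)-2\pi\,{}^tR_0y}$ directly by $\sup_{u,x}$ of the invariant majorant, so no cancellation against other terms can occur and choosing $y=-vR_0/(4m)$ and $v=v_0+t\,\xi\,{}^t\xi$ finishes the contradiction without any delicate suppression of the positive-definite terms.
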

\begin{proof}
 It is an analogue to~\cite[p.410 Lemma]{Kl2} and ~\cite[Corollary to Proposition 1]{Du}.
 Here we omitted the detail.
\end{proof}

\subsection{Index-shift maps of Jacobi forms}\label{ss:hecke_operators}
In this subsection we introduce index-shift maps for two kinds of Jacobi forms
(of matrix index and of half-integral weight).
These are generalizations of
the $V_l$-map in the sense of Eichler-Zagier \cite{EZ}.
 
We define $\mbox{GSp}_n^+(\Z) := \mbox{GSp}_n^+(\R) \cap \Z^{(2n,2n)}$ and
\begin{eqnarray*}
 \widetilde{\mbox{GSp}_n^+(\Z)}
 &:=&
 \left.
 \left\{
  (M,\varphi) \in \widetilde{\mbox{GSp}_n^+(\R)} \, \right| \, M \in \mbox{GSp}_n^+(\Z)
 \right\}.
\end{eqnarray*}

First we define index-shift maps for Jacobi forms of integral weight with some matrix indices.

Let $\mathcal{M} = \smat{*}{*}{ * }{1} \in L_2^+$.
We take a matrix $X \in \mbox{GSp}_n^+(\Z)$ such that the similitude of $X$ is $n(X)=N^2$
with a natural number $N$.
For any $\psi \in J_{k,\mathcal{M}}^{(n)}$ we define the function
\begin{eqnarray*}
 &&
 \hspace{-1cm}
 \psi|V(X) \\
 &:=&
 \sum_{u,v \in (\Z/N\Z)^{(n,1)}}
 \sum_{M \in \Gamma_n \backslash \Gamma_n X \Gamma_n} \!\!\!\!\!
  \psi|_{k,\mathcal{M}} 
   \left(M\times
     \left(\begin{smallmatrix}N^2&0&0&0\\0&N&0&0\\0&0&1&0\\0&0&0&N\end{smallmatrix}\right),
     [((0,u),(0,v)),0_2]\right),
\end{eqnarray*}
where $(0,u),(0,v) \in (\Z/N\Z)^{(n,2)}$.
See the subsection \S\ref{ss:Jacobi_group}
for the symbol of  the matrix
$
  \left(M\times
     \left(\begin{smallmatrix}N^2&0&0&0\\0&N&0&0\\0&0&1&0\\0&0&0&N\end{smallmatrix}\right),
     [((0,u),(0,v)),0_2]\right).
$
The above summations are finite sums and do not depend on
the choice of the representatives $u$, $v$ and $M$.
One can check that
$\psi|V(X)$ belongs to $J_{k,\mathcal{M}[\left(\begin{smallmatrix}N&0\\0&1\end{smallmatrix}\right)]}^{(n)}$.
It means that $V(X)$ is a map:
\begin{eqnarray*}
 V(X) \ : \ J_{k,\mathcal{M}}^{(n)} \rightarrow J_{k,\mathcal{M}[\left(\begin{smallmatrix}N&0\\0&1\end{smallmatrix}\right)]}^{(n)}.
\end{eqnarray*}
Moreover, if $\psi \in J_{k,\mathcal{M}}^{(n)\, cusp}$, then $\psi|V(X) \in J_{k,\mathcal{M}[\left(\begin{smallmatrix}N&0\\0&1\end{smallmatrix}\right)]}^{(n)\, cusp}$.
This fact is shown by the expression of the Fourier coefficients of  $\psi|V(X)$.

For the sake of simplicity we set
\begin{eqnarray*}
 V_{\alpha,n-\alpha}(p^2) 
 &:=&
 V(\mbox{diag}(1_{\alpha},p 1_{n-\alpha}, p^2 1_{\alpha}, p 1_{n-\alpha}))
\end{eqnarray*}
for any prime $p$ and for any $\alpha$ $(0\leq \alpha \leq n)$.

Next we shall define index-shift maps for Jacobi forms of half-integral weight of integer index.
We assume that $p$ is an odd prime.
Let $m$ be a positive integer.
Let $Y = (X,\varphi) \in \widetilde{\mbox{GSp}_n^+(\Z)}$ with $n(X) = p^{2\nu}$, where $\nu$ is a positive integer.
For $\phi \in J_{k-\frac12,m}^{(n)}$ we define
\begin{eqnarray*}
 \phi|\widetilde{V}(Y)
 &:=&
 n(X)^{\frac{n(2k-1)}{4} - \frac{n(n+1)}{2}}
 \!\!\!\!\!\!\!\!\!\!
 \sum_{\tilde{M} \in \Gamma_0^{(n)}(4)^* \backslash \Gamma_0^{(n)}(4)^* Y \Gamma_0^{(n)}(4)^*}
 \!\!\!\!\!\!\!\!\!\!
  \phi|_{k-\frac12,m} (\tilde{M},[(0,0),0]) ,
\end{eqnarray*}
where the above summation is a finite sum and
does not depend on the choice of the representatives $\tilde{M}$.
One can show by a direct computation
that $\phi|\widetilde{V}(Y)$ belongs to $J_{k-\frac12,mp^{2\nu}}^{(n)}$.

For the sake of simplicity we set
\begin{eqnarray*}
 \tilde{V}_{\alpha,n-\alpha}(p^2)
 &:=&
 \tilde{V}((\mbox{diag}(1_{\alpha},p 1_{n-\alpha}, p^2 1_{\alpha}, p 1_{n-\alpha}),p^{\alpha/2}))
\end{eqnarray*}
for any odd prime $p$ and for any $\alpha$ $(0\leq \alpha \leq n)$.

For the prime $p=2$,
the  index-shift map $\tilde{V}_{\alpha,n-\alpha}(4)$ is defined for certain subspace
$J_{k-\frac12,m}^{+(n)}$ of $J_{k-\frac12,m}^{(n)}$.
This is a map from $J_{k-\frac12,m}^{+(n)}$ to
$J_{k-\frac12,4m}^{+(n)}$.
The map $\tilde{V}_{\alpha,n-\alpha}(4)$ is defined through the linear isomorphism
between $J_{k,\M}^{(n)}$ and $J_{k-\frac12,m}^{+(n)}$
with $\M = \smat{ * }{ * }{ * }{1} \in L_2^+$ such that $\det(2\M) = m$.
The definition of $\tilde{V}_{\alpha,n-\alpha}(4)$ is
\begin{eqnarray*}
 \iota_{\mathcal{M}}(\psi)| \tilde{V}_{\alpha,n-\alpha}(4)
 &:=&
 2^{k(2n+1) - n (n +\frac72) + \frac12 \alpha }\
 \iota_{\mathcal{M}[\smat{2}{ }{ }{1}]}(\psi| V_{\alpha,n-\alpha}(4))
\end{eqnarray*}
for $\psi \in J_{k,\M}^{(n)}$.
Here $\iota_{\M}$ is the linear isomorphism map from 
$J_{k,\M}^{(n)}$ to $J_{k-\frac12,m}^{+(n)}$.
See \S\ref{s:lin_iso_jacobi} for the detail of the map $\iota_{\M}$.

\section{Ikeda lift and Siegel modular forms of half-integral weight}\label{s:Ikeda_lift}
In this section we recall the Ikeda lift and construct Siegel modular forms of half-integral weight
through the Eichler-Zagier-Ibukiyama correspondence.

Let $2n$ and $k$ be positive even integers such that $k > 2n+1$.
Let 
\begin{eqnarray*}
  g(z) &=&
  \sum_{\begin{smallmatrix} N \in \Z \\ N \equiv 0, (-1)^{n} \!\! \mod 4 \end{smallmatrix}}
  c(N) e(N z)  
    \in S_{k-n+\frac12}^{+(1)}
\end{eqnarray*}
be a Hecke eigenform.
Let $f$ be a normalized Hecke eigenform of elliptic cusp form of weight $2k-2n$
which corresponds to $g$ by the Shimura correspondence.
Let $F$ be a Ikeda lift of $g$ which is a Siegel cusp form of weight $k$ of degree $2n$
given by
\begin{eqnarray*}
  F(\tau) &=&
  \sum_{T \in L_{2n}^+} A(T) e(T \tau),
\end{eqnarray*}
where $\tau \in \H_{2n}$ and the Fourier coefficient $A(T)$ is given by
\begin{eqnarray*}
  A(T) &=&
  c(|D_T|) f_T^{k-n-\frac12} \prod_{\begin{smallmatrix} p| f_T \end{smallmatrix}} 
  \tilde{F}_{p}(T, \alpha_p),
\end{eqnarray*}
and where the fundamental discriminant $D_T$ and the natural number $f_T$ 
are determined by $(-1)^n \det(2T) = D_T f_T^2$,
and where $\tilde{F}_p(T,X) \in \C[X + X^{-1}]$  is a certain Laurent polynomial
(see \cite[p. 642]{Ik} for the definition).
Here $\alpha_p$ is the complex number determined by
$a_f(p) = (\alpha_p + \alpha_p^{-1})p^{k-n-\frac12}$, and where $a_f(p)$ is the $p$-th Fourier coefficient of $f$.

We take the Fourier-Jacobi expansion of $F$ : 
\begin{eqnarray*}
  F\left(\begin{pmatrix} \tau & z \\ ^t z & \omega \end{pmatrix}\right)
  &=&
  \sum_{N \in \Z} \psi_N(\tau,z) e(N \omega),
\end{eqnarray*}
where $\tau \in \H_{2n-1}$, $\omega \in \H_1$ and $z \in \C^{(2n-1,1)}$.
The function $\psi_N$ is a Jacobi form of weight $k$ of index $N$ of degree $2n-1$.

There exists $G \in S_{k-\frac12}^{+(2n-1)}$ which corresponds to $\psi_1$
by the linear isomorphism between $S_{k-\frac12}^{+(2n-1)}$ and the space
of Jacobi cusp forms of weight $k$ of index $1$ of degree $2n-1$
(see ~\cite{Ib} and see also Proposition~\ref{prop:matrix_integer_isom} in \S\ref{s:lin_iso_jacobi}.
The form $G$ is given by $G = \iota_1(\psi_1)$).

We remark that the map
\begin{eqnarray*}
   S_{k-n + \frac12}^{+(1)} \rightarrow S_{k-\frac12}^{+(2n-1)}
\end{eqnarray*}
given by the above manner is a linear map.
If $g$ is a Hecke eigenform, then $G$ is also a Hecke eigenform (cf. \cite[Theorem 1.1]{FJlift}).

\section{Generalization of the Maass relation for Siegel modular forms of half-integral weight}
\label{s:maass_relation}

Let $g \in S_{k-n+\frac12}^{+(1)}$ be a Hecke eigen form.
Let $G \in S_{k-\frac12}^{+(2n-1)}$ be the lift of $g$ as in \S\ref{s:Ikeda_lift}.
We take the Fourier-Jacobi expansion
\begin{eqnarray*}
  G\left(\begin{pmatrix} \tau & z \\ ^t z & \omega \end{pmatrix} \right)
  &=&
  \sum_{m\in \Z_{>0}} \phi_{m}(\tau,z) e(m\omega).
\end{eqnarray*}
Since $G$ belongs to the plus-space, $\phi_m = 0$ unless
$m \equiv 0$, $3$ $\!\! \mod 4$.

To explain a generalization of the Maass relation,
we prepare some symbols.
For integers $l$ $(2\leq l)$, $\beta$ $(0 \leq \beta \leq l-1)$
and $\alpha$ $(0\leq \alpha \leq l)$, we put
\begin{eqnarray*}
b_{\beta,\alpha} &:=&
b_{\beta,\alpha,l,p}(X)
=
\begin{cases}
  (p^{l +1 -\alpha} - p^{-l-1+\alpha})  p^{\frac12} & \mbox{if } \beta = \alpha-2, \\
  (X + X^{-1}) p & \mbox{if } \beta = \alpha-1, \\
  p^{-l+\alpha + \frac32}  & \mbox{if } \beta = \alpha, \\
  0 & \mbox{otherwise},
\end{cases}
\end{eqnarray*}
and we set $B_{l,l+1}(X)$ as the $l \times (l+1)$-matrix defined by
\begin{eqnarray*}
  B_{l,l+1}(X) &:=&
  (b_{\beta,\alpha})_{\begin{smallmatrix} \beta = 0,...,l-1 \\ \alpha = 0,...,l \end{smallmatrix}}
  =
  \begin{pmatrix}
    p^{-l+\frac32} & b_{0,1} & \cdots & b_{0,l} \\
     & \ddots & \ddots & \vdots \\
    0  &  & p^{\frac12} & b_{l-1,l}
  \end{pmatrix}
\end{eqnarray*}
with entries in $\C[X + X^{-1}]$. 
The $2\times (n+1)$-matrix $A_{2,2n-1}^{p}\!\left(X\right)$
is defined by
\begin{eqnarray*}
  A_{2,n+1}^{p}(X)
  &:=&
  \prod_{l=2}^{n} B_{l,l+1}(p^{\frac{n+2}{2}-l} X) \\
  &=&
  B_{2,3}(p^{\frac{n+2}{2}-2} X) B_{3,4}(p^{\frac{n+2}{2}-3}X) \cdots B_{n,n+1}(p^{\frac{n+2}{2}-n}X).
\end{eqnarray*}

For $a \in \Z_{>0}$ and
for $\phi \in J_{k-\frac12,m}^{(n')}$ we define the function $\phi|U_{a}$ by
\begin{eqnarray*}
  (\phi|U_{a})(\tau,z)
  &:=&
  \phi(\tau, a z ).
\end{eqnarray*}
In~\cite{CE_N} we obtained the following generalization of the Maass relation.
\begin{prop}\label{prop:maass_relation}
For any natural number $m$ and for any prime $p$,
we have the identity between the vectors
  \begin{eqnarray*}
  &&  \!\!\!\!\!\!\!\!\!\!
  (\phi_m|\tilde{V}_{0,2n-2}(p^2), \phi_m|\tilde{V}_{1,2n-3}(p^2),...,\phi_m|\tilde{V}_{2n-2,0}(p^2))\\
  &=&
  p^{k(2n-3)-2 n^2 - n + \frac{11}{2}}
  \left(\phi_{\frac{m}{p^2}}|U_{p^2},\, \phi_m|U_p,\, \phi_{mp^2} \right)
  \begin{pmatrix}
          0 & p^{2k-3} \\
          p^{k-2} & p^{k-2} \left(\frac{-m}{p}\right) \\
          0 & 1 \end{pmatrix}\\
  &&
  \times
  A_{2,2n-1}^{p}\!\left(\alpha_p\right)
  \,
  diag(1,p^{\frac12},p,...,p^{n-1}).
\end{eqnarray*}
\end{prop}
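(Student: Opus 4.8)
The plan is to deduce the stated identity from the generalized Maass relation of \cite[Theorem 8.2]{CE_N}. That result describes, for a Hecke eigenform in the generalized plus-space, how the full family of index-shift maps $\tilde V_{\alpha,2n-2-\alpha}(p^2)$ $(0\le\alpha\le 2n-2)$ acts on the Fourier-Jacobi coefficients, expressing the resulting vector through the functions $\phi_{m/p^2}|U_{p^2}$, $\phi_m|U_p$, $\phi_{mp^2}$ and a matrix of local Hecke data. Since $G=\iota_1(\psi_1)$ arises from the Ikeda lift $F$ via the Eichler-Zagier-Ibukiyama correspondence and is itself a Hecke eigenform (cf.\ \cite[Theorem 1.1]{FJlift}), its coefficients $\phi_m$ satisfy the hypotheses of that theorem. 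The substantive point of the present proposition is therefore the explicit evaluation of the eigenvalue data in terms of the single Satake parameter $\alpha_p$ of $f$, which is what produces the matrix $A_{2,2n-1}^p(\alpha_p)$.

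I would organize the computation recursively in the degree. The base (bottom-degree) case yields the $3\times 2$ matrix on the right-hand side: it records how the three translates $\phi_{m/p^2}|U_{p^2}$, $\phi_m|U_p$, $\phi_{mp^2}$ enter, together with the dependence on the class of $m$ modulo $p$ through the Legendre symbol $\left(\frac{-m}{p}\right)$, exactly as in the classical Eichler-Zagier $V_\ell$-relations. This seed is then propagated through the degrees $\ell=2,\dots,2n-2$ by the local transition matrices $B_{\ell,\ell+1}$, whose product is $A_{2,2n-1}^p(\alpha_p)$. The appearance of $\alpha_p$ itself---rather than an abstract Hecke eigenvalue---is forced by the explicit shape $A(T)=c(|D_T|)\,f_T^{k-n-\frac12}\prod_{p\mid f_T}\tilde F_p(T,\alpha_p)$ of the Fourier coefficients of the Ikeda lift, which makes every relevant local eigenvalue a Laurent polynomial in $\alpha_p+\alpha_p^{-1}$.

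Concretely, I would compare Fourier-Jacobi (equivalently, Fourier) coefficients on the two sides. On the left, the coefficients of $\phi_m|\tilde V_{\alpha,2n-2-\alpha}(p^2)$ are obtained from the definition of $\tilde V$ in \S\ref{ss:hecke_operators}, including its normalizing power of $n(X)$; on the right they are read off from $A(T)$ together with the isomorphism $\iota_\M$ of \S\ref{s:lin_iso_jacobi}. These normalizations, combined with the scalings built into $U_{p^2}$ and $U_p$, account for the overall factor $p^{k(2n-3)-2n^2-n+\frac{11}{2}}$ and for the diagonal matrix $\mathrm{diag}(1,p^{\frac12},p,\dots,p^{n-1})$, whose $2n-1$ entries match the number of index-shift maps. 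The identity then reduces, prime by prime, to a finite system of polynomial identities in $\alpha_p$.

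The main obstacle is precisely this local verification: checking that the recursive action of the index-shift maps reproduces the matrices $B_{\ell,\ell+1}$ and their product $A_{2,2n-1}^p(\alpha_p)$ with all powers of $p$ and all half-integral-weight normalizations correct. This is the technical heart of the argument, carried out in \cite{CE_N}. A secondary point demanding care is that $\tilde V_{\alpha,2n-2-\alpha}(p^2)$ is defined directly only for odd $p$, so the prime $p=2$ must be handled through the plus-space map $\tilde V_{\alpha,2n-2-\alpha}(4)$ and the isomorphism $\iota_\M$, verifying that the same matrix identity persists.
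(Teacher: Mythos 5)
Your proposal coincides with the paper's treatment: the paper gives no independent proof of Proposition~\ref{prop:maass_relation}, but presents it as a quotation of the generalized Maass relation established (essentially) in \cite[Theorem 8.2]{CE_N}, which is exactly the reduction you make, and your sketch of the underlying verification --- the base $3\times 2$ relation of Eichler--Zagier type, propagation through the degrees by the transition matrices $B_{l,l+1}$ whose product is $A_{2,2n-1}^{p}(\alpha_p)$, the normalization bookkeeping producing the scalar factor and the diagonal matrix, and the separate treatment of $p=2$ via the plus-space isomorphism $\iota_{\mathcal{M}}$ --- is consistent with how that cited work proceeds. So the approach is essentially the same, with the technical heart correctly attributed to \cite{CE_N} rather than re-proved here.
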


Therefore we obtain
\begin{eqnarray}\label{id:prop_4_1}
   && \\ \notag
   \phi_m| \tilde{V}_{i,2n-2-i}(p^2)
   &=&
   a_1 \, \phi_m|U_p
   +
   a_2 \left( p^{2k-3} \phi_{\frac{m}{p^2}} | U_{p^2} + p^{k-2} \left( \frac{-m}{p} \right) \phi_m|U_p + \phi_{mp^2} \right)
\end{eqnarray}
for any $i$ $(0 \leq i \leq 2n-2)$
with certain constants $a_j = a_{j,i,p}$ $(j = 1,2)$ which do not depend
on the choice of $m$.
In particular, if $i = 1$, then by a straightforward calculation
we obtain $a_2 = a_{2,1,p} = p^{k(2n-3)-4n^2+7n-\frac32} \neq 0$.

\begin{df}
 For any prime $p$
 we set
 \begin{eqnarray*}
   \hat{D}_{2n-2}(p^2)
   &:=&
   a_2^{-1} (\tilde{V}_{1,2n-3}(p^2) - a_1 U_p),
 \end{eqnarray*}
 where $a_1$ and $a_2$ are constants determined by the identity~(\ref{id:prop_4_1}).
\end{df}
We remark that the map $\hat{D}_{2n-2}(p^2)$ depend on the choice of $g$,
since $a_1$ is determined by the value $\{ \alpha_p^{\pm}\}$.
Remark also that $\hat{D}_{2n-2}(p^2)$ and $\hat{D}_{2n-2}(q^2)$ are compatible
if two primes $p$ and $q$ are not the same.
Moreover, $\hat{D}_{2n-2}(p^2)$ and $U_N$ are compatible for any natural number $N$.

Let $n' = 2n-2$
and let $m$ be a natural number.
We define the sequence of maps $\{\hat{D}_{n'}(N)\}_N$
and $\{D_{n'}(N)\}_N := \{D_{n',m}(N)\}_N$ through the following two formal Dirichlet series
\begin{eqnarray*}
 \sum_{N=1}^{\infty}  \frac{\hat{D}_{n'}(N^2)}{N^s}
 &=&
 \prod_{p}  \left( I - \hat{D}_{n'}(p^2) p^{-s}+ U_{p^2} p^{2k-3-2s}\right)^{-1} \\
\end{eqnarray*}
and
\begin{eqnarray*}
 \sum_{N=1}^{\infty}  \frac{D_{n'}(N^2)}{N^s}
 &=&
 \left\{\prod_{p}  \left(I - \left(\frac{-m}{p}\right) U_p p^{k-2-s} \right) \right\}
 \sum_{N=1}^{\infty}  \frac{\hat{D}_{n'}(N^2)}{N^s},
\end{eqnarray*}
where $I$ denotes the identity map.
We remark that
the definitions of $\hat{D}_{n'}(N^2)$
and $D_{n'}(N^2)$ depend on the choice of 
$g \in S_{k-n+\frac12}^{+(1)}$.
The definition of $\hat{D}_{n'}(N^2)$
is independent of the choice of $m$,
but the definition of $D_{n'}(N^2)$ depends on the choice of $m$.
Because of the compatibility of $\hat{D}_{n'}(p^2)$ and $U_N,$ the above definitions
are well-defined.
Moreover $D_{n'}(N^2)$ and $U_M$ are compatible for any natural numbers $N$ and $M$.

\begin{prop}\label{prop:maass_relation_D}
Let $G \in S_{k-\frac12}^{+(2n-1)}$ be as above.
For any natural numbers $N$ and
for any negative fundamental discriminant $-m$,
we define $D_{n'}(N^2)$ as above.
Then we have
\begin{eqnarray*}
  \phi_{mN^2} &=& \phi_m|D_{n'}(N^2).
\end{eqnarray*}
\end{prop}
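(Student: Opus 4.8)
The plan is to collapse the whole statement onto a single index-recursion coming from the Maass relation, and then to match that recursion against the three-term recurrence that the two defining Dirichlet series force on the operators $D_{n'}(N^2)$. First I would record the local form of the Maass relation. Setting $i=1$ in~(\ref{id:prop_4_1}) and substituting the definition $\hat D_{n'}(p^2)=a_2^{-1}(\tilde V_{1,2n-3}(p^2)-a_1U_p)$ (legitimate since $a_2\neq0$), the two $a_1U_p$ contributions cancel and one is left, for \emph{every} index $\mu$ and every prime $p$, with
\begin{eqnarray*}
 \phi_\mu|\hat D_{n'}(p^2)
 &=&
 \phi_{\mu p^2}
 + p^{k-2}\left(\tfrac{-\mu}{p}\right)\phi_\mu|U_p
 + p^{2k-3}\phi_{\mu/p^2}|U_{p^2},
\end{eqnarray*}
with the convention $\phi_{\mu/p^2}=0$ when $p^2\nmid\mu$. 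Because Proposition~\ref{prop:maass_relation} is valid for all $m$ and the constants $a_1,a_2$ do not depend on $m$, this identity is uniform in $\mu$.

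Next I would read off from the defining Euler products the recurrence obeyed by $D_{n'}(p^{2j})=D_{n',m}(p^{2j})$. Writing the $p$-factor of $\sum_N\hat D_{n'}(N^2)N^{-s}$ as $(I-\hat D_{n'}(p^2)t+U_{p^2}p^{2k-3}t^2)^{-1}$ with $t=p^{-s}$, multiplying the $p$-factor of $\sum_N D_{n'}(N^2)N^{-s}$ by $(I-\hat D_{n'}(p^2)t+U_{p^2}p^{2k-3}t^2)$, and using that all operators attached to the one prime $p$ commute (the compatibility asserted before the proposition), comparison of coefficients gives $D_{n'}(1)=I$, $D_{n'}(p^2)=\hat D_{n'}(p^2)-\left(\tfrac{-m}{p}\right)p^{k-2}U_p$, and for $j\geq2$
\begin{eqnarray*}
 D_{n'}(p^{2j})
 &=&
 \hat D_{n'}(p^2)\,D_{n'}(p^{2(j-1)})
 - p^{2k-3}U_{p^2}\,D_{n'}(p^{2(j-2)}).
\end{eqnarray*}

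I would then establish $\phi_m|D_{n'}(p^{2j})=\phi_{mp^{2j}}$ by induction on $j$. The cases $j=0,1$ follow at once from the two displays, the term $\phi_{m/p^2}|U_{p^2}$ dropping out because $-m$ being a fundamental discriminant forces $p^2\nmid m$ for every prime $p$; at $p=2$ one uses in addition the plus-space vanishing $\phi_\mu=0$ for $\mu\not\equiv0,3\bmod4$, which kills $\phi_{m/4}$ exactly in the fundamental case. For the inductive step, commuting operators past one another and invoking the hypothesis reduces $\phi_m|D_{n'}(p^{2j})$ to $\phi_{mp^{2(j-1)}}|\hat D_{n'}(p^2)-p^{2k-3}\phi_{mp^{2(j-2)}}|U_{p^2}$; applying the local Maass identity with $\mu=mp^{2(j-1)}$ and noting $\left(\tfrac{-mp^{2(j-1)}}{p}\right)=0$ for $j\geq2$ makes the two $U_{p^2}$-terms cancel, leaving precisely $\phi_{mp^{2j}}$.

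Finally, since the series are Euler products and operators for distinct primes commute, $D_{n'}(N^2)=\prod_{p\mid N}D_{n'}(p^{2\,\mathrm{ord}_pN})$, and I would apply the single-prime result one prime at a time. The point to verify is that the symbol $\left(\tfrac{-m}{p}\right)$ hard-wired into $D_{n',m}$ matches the symbol appropriate to the intermediate index $m'=m\prod_{q\neq p}q^{2\,\mathrm{ord}_qN}$; this is exactly the square-invariance $\left(\tfrac{-m'}{p}\right)=\left(\tfrac{-m}{p}\right)$ for a square factor coprime to $p$, and likewise $p^2\nmid m'$, so the single-prime induction applies verbatim with base $\phi_{m'}$. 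Chaining these identities over the primes dividing $N$ yields $\phi_m|D_{n'}(N^2)=\phi_{mN^2}$. I expect the main obstacle to be the bookkeeping in the inductive step — checking that the quadratic-symbol corrections vanish at every stage and that the $U_{p^2}$-term telescopes exactly — together with the separate care at $p=2$, where it is the plus-space congruence rather than fundamentality alone that eliminates the $\phi_{m/4}$ contribution; the various commutativity assertions I would simply cite from the compatibility stated for $\hat D_{n'}(p^2)$ and the $U_N$ rather than reprove.
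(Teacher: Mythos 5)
Your proof is correct and follows essentially the same route as the paper's: you extract the three-term recurrence for $D_{n'}(p^{2j})$ from the two defining Dirichlet series, combine it with the identity~(\ref{id:prop_4_1}) at $i=1$ (the paper's~(\ref{id:prop_3_4})), run the two-step induction in $j$ using the vanishing of $\phi_{m/p^2}$, and pass to composite $N$ via the square-invariance $\left(\frac{-mc^2}{p}\right)=\left(\frac{-m}{p}\right)$ for $c$ coprime to $p$, exactly as in the paper's proof. Your handling of $p=2$ (where $4\mid m$ is possible for fundamental $-m$, so it is the plus-space congruence $\phi_{\mu}=0$ for $\mu\not\equiv 0,3 \bmod 4$ rather than squarefreeness that kills $\phi_{m/4}$) is actually spelled out more carefully than the paper's bare remark that $\phi_{m/p^2}=0$, though your earlier phrase that fundamentality ``forces $p^2\nmid m$ for every prime $p$'' should be restricted to odd $p$.
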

\begin{proof}
Due to the definition of $D_{n'}(p^{2\delta})$, we have
\begin{eqnarray*}
  0 
  &=&
  p^{2k-3} D_{n'}(p^{2\delta-2}) U_{p^2}
  -  D_{n'}(p^{2\delta}) \hat{D}_{n'}(p^2)
  +   D_{n'}(p^{2\delta+2})
\end{eqnarray*}
for any $\delta \geq 1$
and
\begin{eqnarray*}
    - \left( \frac{-m}{p} \right) p^{k-2} U_{p}
  &=&
  -  \hat{D}_{n'}(p^2)
  +  D_{n'}(p^2).
\end{eqnarray*}
Thus
\begin{eqnarray}\label{id:prop_4_2} 
  && \\
  \notag
  0 
  &=&
  p^{2k-3} \phi_{m} | D_{n'}(p^{2\delta-2}) U_{p^2}
  - \phi_{m}| D_{n'}(p^{2\delta}) \hat{D}_{n'}(p^2)
  + \phi_{m} | D_{n'}(p^{2\delta+2})
\end{eqnarray}
for any $\delta \geq 1$ and
\begin{eqnarray}\label{id:prop_4_3}
  - \left( \frac{-m}{p} \right) p^{k-2} \phi_{m}|U_{p}
  &=&
  - \phi_{m}| \hat{D}_{n'}(p^2)
  + \phi_{m} | D_{n'}(p^2).
\end{eqnarray}

On the other hand, 
since $\hat{D}_{n'}(p^2) = - \frac{a_1}{a_2} U_p  + \frac{1}{a_2} \tilde{V}_{1,2n-3}(p^2)$
and due to (\ref{id:prop_4_1}) with $i = 1$, we have
\begin{eqnarray}\label{id:prop_3_4}
  - \left( \frac{-m'}{p} \right) p^{k-2} \phi_{m'}|U_{p}
  &=&
  p^{2k-3} \phi_{\frac{m'}{p^2}}|U_{p^2}
  - \phi_{m'}|\hat{D}_{n'}(p^2) + \phi_{m'p^2}
\end{eqnarray}
for any natural number $m'$.

We remark that $\phi_{\frac{m}{p^2}} = 0$.

If $m' = m$, then due to the identities (\ref{id:prop_4_3}) and (\ref{id:prop_3_4}),
we have $\phi_{mp^2} = \phi_m | D_{n'}(p^2)$.

If $m' = mp^{2\delta}$ $(\delta \geq 1)$ and if $\phi_{mp^{2\delta - 2j}} = \phi_m|D_{n'}(p^{2\delta - 2j}) $ $(j = 0,1)$
is true, then due to the identities (\ref{id:prop_4_2}) and (\ref{id:prop_3_4}),
we have $\phi_{mp^{2\delta+2}} = \phi_m|D_{n'}(p^{2\delta+2})$.
Thus, by induction we obtain $\phi_{mp^{2\delta}} = \phi_m|D_{n'}(p^{2\delta})$
for any $\delta \geq 0$.

Let $q$ be a prime which is different from $p$, then $\left( \frac{-mp^{2\delta}}{q} \right)
= \left( \frac{-m}{q} \right)$.
Thus the definition of $D_{n'}(q^{2\gamma})$ does not change, even if we
replace $m$ by $m p^{2\delta}$. The identities (\ref{id:prop_4_2}) and (\ref{id:prop_4_3})
are true, even if we replace $m$ and $p$ by $mp^{2\delta}$ and by $q$, respectively.
Therefore, we conclude this proposition by induction with respect to
natural numbers $p^{2\delta}$ and $m$.
\end{proof}

\begin{lemma}\label{lem:DD}
Let $G \in S_{k-\frac12}^{+(2n-1)}$ be as above.
We fix a natural number $m$. 
We define $\hat{D}_{n'}(N)$ and $D_{n'}(N)$ as above.
For any natural numbers $N$, $M$, $\delta$ and $\gamma$,
and for any prime $p$,
we have the identities
\begin{itemize}
\item[(1)]
 \begin{eqnarray*}
   \hat{D}_{n'}(p^{2\delta}) \hat{D}_{n'}(p^{2\gamma})
   &=&
   \sum_{i=0}^{min(\delta,\gamma)} p^{(2k-3)i} U_{p^{2i}} \hat{D}_{n'}(p^{2(\delta +  \gamma - 2i)})
 \end{eqnarray*}
\item[(2)]
 \begin{eqnarray*}
   \hat{D}_{n'}(N^2) \hat{D}_{n'}(M^2)
   &=&
   \sum_{d | (N,M)} d^{(2k-3)} U_{d^2} \hat{D}_{n'}\left(\frac{N^2 M^2}{d^4}\right)
 \end{eqnarray*}
\item[(3)]
\begin{eqnarray*}
  D_{n'}(p^{2\delta}) D_{n'}(p^{2 \gamma})
  &=&
  \sum_{i=0}^{min(\delta,\gamma)}
    p^{(2k-3)i} U_{p^{2i}} D_{n'}(p^{2(\delta + \gamma - 2i)})\\
  &&
  -
  \left( \frac{-m}{p}\right) p^{k-2}
  \sum_{i=0}^{min(\delta,\gamma)-1}
    p^{(2k-3)i} U_{p^{2i+1}} D_{n'}(p^{2(\delta + \gamma - 1 - 2i)}).
\end{eqnarray*}
\item[(4)]
\begin{eqnarray*}
  D_{n'}(N^2) D_{n'}(M^2)
  &=&
  \sum_{d | (N,M)}
    d^{2k-3} 
  \sum_{d_1 | \frac{(N,M)}{d}}
   \mu(d_1) d_1^{k-2} \left( \frac{-m}{d_1} \right) U_{d^2 d_1} D_{n'}\! \left( \frac{N^2 M^2}{d^4 d_1^2}\right),
\end{eqnarray*}
where $\mu$ is the M\"obius function.
\end{itemize}
\end{lemma}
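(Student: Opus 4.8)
The plan is to establish the two prime-power formulas (1) and (3) first, and then deduce the global formulas (2) and (4) from them by multiplicativity. Throughout I would use that both defining Dirichlet series are Euler products and that the operators $\hat{D}_{n'}(p^2)$, $\hat{D}_{n'}(q^2)$ and $U_N$ all commute, which is exactly the compatibility recorded above; this is what makes the manipulations of the local factors legitimate.

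For (1) I would read off from the generating function
\[
\sum_{\delta=0}^{\infty} \hat{D}_{n'}(p^{2\delta})\, x^\delta = \left(I - \hat{D}_{n'}(p^2)\, x + p^{2k-3} U_{p^2}\, x^2\right)^{-1}
\]
the three-term recurrence $\hat{D}_{n'}(p^{2\delta}) = \hat{D}_{n'}(p^2)\,\hat{D}_{n'}(p^{2\delta-2}) - p^{2k-3}U_{p^2}\,\hat{D}_{n'}(p^{2\delta-4})$ for $\delta \geq 2$. Writing $h_\delta := \hat{D}_{n'}(p^{2\delta})$ and $S := p^{2k-3}U_{p^2}$, identity (1) reads $h_\delta h_\gamma = \sum_{i=0}^{\min(\delta,\gamma)} S^i h_{\delta+\gamma-2i}$. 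By symmetry it suffices to treat $\gamma \leq \delta$, and I would induct on $\gamma$. The cases $\gamma = 0,1$ are immediate from $h_0 = I$ and the recurrence; in the inductive step I would substitute $h_{\gamma+1} = h_1 h_\gamma - S h_{\gamma-1}$, apply the inductive hypotheses to $h_{\delta+1}h_\gamma$, $h_{\delta-1}h_\gamma$ and $h_\delta h_{\gamma-1}$, and observe that the two resulting sums telescope down to the single top term $S^{\gamma+1} h_{\delta-\gamma-1}$, which is precisely the new term on the right-hand side for $\min(\delta,\gamma+1)=\gamma+1$.

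Identity (3) I would reduce to (1). Since the bracketed factor $\prod_p\bigl(I - \left(\frac{-m}{p}\right)U_p\,p^{k-2-s}\bigr)$ and the $\hat{D}$-series are both Euler products whose factors commute, the local factor at $p$ of $\sum_N D_{n'}(N^2)N^{-s}$ is the product of $\bigl(I - \left(\frac{-m}{p}\right)p^{k-2}U_p\,x\bigr)$ with the local $\hat{D}$-factor, so that $D_{n'}(p^{2\delta}) = h_\delta - C h_{\delta-1}$ with $C := \left(\frac{-m}{p}\right)p^{k-2}U_p$ and $h_{-1}=0$. Expanding
\[
D_{n'}(p^{2\delta})\, D_{n'}(p^{2\gamma}) = h_\delta h_\gamma - C\bigl(h_\delta h_{\gamma-1} + h_{\delta-1} h_\gamma\bigr) + C^2 h_{\delta-1} h_{\gamma-1}
\]
and rewriting each of the four products by (1), the terms $h_\delta h_\gamma$ and $C^2 h_{\delta-1}h_{\gamma-1}$ match the claimed right-hand side after re-substituting $D_{n'} = h - Ch$, while the two cross terms reduce to verifying a single identity among the sums $\sum_i S^i h_{\delta+\gamma-1-2i}$. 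The only genuine care here is the range of the summation index, which forces a split between $\delta > \gamma$ and $\delta = \gamma$; in the latter case the matching is rescued exactly by the vanishing $h_{-1}=0$.

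Finally, (2) and (4) follow from (1) and (3) by multiplicativity. Both defining series being Euler products, one has $\hat{D}_{n'}(N^2) = \prod_p \hat{D}_{n'}(p^{2v_p(N)})$ and $D_{n'}(N^2) = \prod_p D_{n'}(p^{2v_p(N)})$, with factors for distinct primes commuting. For (2), $\hat{D}_{n'}(N^2)\hat{D}_{n'}(M^2)$ is the product over $p$ of the local identities (1), and collecting the local choices $0 \leq i_p \leq \min(v_p(N),v_p(M))$ into $d = \prod_p p^{i_p}$, together with $U_{d^2}=\prod_p U_{p^{2i_p}}$ and $N^2M^2/d^4 = \prod_p p^{2(v_p(N)+v_p(M)-2i_p)}$, produces exactly the sum over $d \mid (N,M)$. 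For (4) each prime independently selects a term from either the first sum of (3) (contributing $p^{(2k-3)i_p}U_{p^{2i_p}}$) or the second sum (contributing the extra $-\left(\frac{-m}{p}\right)p^{k-2}U_p$); encoding the second choices by a squarefree $d_1$, the product of the chosen primes, and the common indices by $d$, and using multiplicativity of $\mu$ and of the symbol $\left(\frac{-m}{\cdot}\right)$, reassembles precisely the double sum over $d \mid (N,M)$ and $d_1 \mid (N,M)/d$. The main obstacle is the index bookkeeping in (3)---keeping the summation ranges and the edge case $\delta=\gamma$ straight so the cross terms cancel correctly---after which (4) is a routine multiplicative reassembly.
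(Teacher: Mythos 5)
Your proposal is correct and takes essentially the same route as the paper's proof: identity (1) by the three-term recurrence read off from the Euler factor (the paper's ``straightforward calculation''), identity (3) deduced from (1) via the relation $D_{n'}(p^{2\delta}) = \hat{D}_{n'}(p^{2\delta}) - \left(\frac{-m}{p}\right)p^{k-2}\,U_p\,\hat{D}_{n'}(p^{2\delta-2})$ --- exactly the relation the paper invokes --- and (2), (4) by multiplicative reassembly from the local identities. Your write-up merely supplies the induction and the edge-case bookkeeping (e.g.\ the rescue by $h_{-1}=0$ when $\delta=\gamma$) that the paper leaves implicit, and these details check out.
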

\begin{proof}
 The identity (1) follows from a straightforward calculation. 
 The identity (3) follows from (1) and from the relation
 \begin{eqnarray*}
   D_{n'} (p^{2\delta})
   &=&
   \hat{D}_{n'}(p^{2\delta}) - \left( \frac{-m}{p} \right) p^{k-2} U_p \hat{D}_{n'}(p^{2\delta-2}).
 \end{eqnarray*}
 The identities (2) and (4) follow from (1) and (3).
\end{proof}

\section{Isomorphism between the spaces of Jacobi forms}\label{s:lin_iso_jacobi}
In this section we review a generalization of
the Eichler-Zagier-Ibukiyama correspondence shown in~\cite{matrix_integer}.
It is a linear isomorphism between certain spaces of Jacobi forms
of integral weight and of half-integral weight.

Let $n'$, $k$ and $r$ be natural numbers. 
We assume that $k$ is an even integer
and $r \geq 1$.
We take a matrix $\M = \begin{pmatrix} \M_1 & \frac12 L \\ \frac12 ^t L & 1  \end{pmatrix} \in L_r^+$
with $\M_1 \in L_{r-1}^+$ and $L \in M_{r-1,1}(\Z)$.
We set
\begin{eqnarray*}
  \mathfrak{M} &:=& 
  \begin{cases} 
    4\M_1 - L {^t L} & \mbox{if $r \geq 2$}, \\
    \emptyset & \mbox{if $r = 1$}.
  \end{cases}
\end{eqnarray*}
If $r = 1$, then $\M = 1$ and
we put $\det(\mathfrak{M}) := \det(\emptyset) = 1$ by abuse of notation.

A plus-space $J_{k-\frac12,\mathfrak{M}}^{+(n')}$ for Jacobi forms is introduced
in~\cite{matrix_integer}.
This is a subspace of $J_{k-\frac12,\mathfrak{M}}^{(n')}$
and is a generalization of the generalized plus-space of Siegel modular forms
to Jacobi forms,
and where $J_{k-\frac12,\mathfrak{M}}^{(n')}$ denotes the space of Jacobi forms
of weight $k-\frac12$ of index $\mathfrak{M}$ of degree $n'$.
(See the definition of $J_{k-\frac12,\mathfrak{M}}^{(n')}$ in~\cite[\S2.3]{matrix_integer} for the case $r-1 \geq 2$.
In this article we use the case $r-1 = 0$ and $r-1 = 1$).
The space $J_{k-\frac12,\mathfrak{M}}^{+(n')}$ is defined as follows.
Let $\phi \in J_{k-\frac12, \mathfrak{M}}^{(n')}$
be a Jacobi form of weight $k-\frac12$ of index $\mathfrak{M}$ on $\Gamma_0^{(n')}(4)$.
We take the Fourier expansion of $\phi$ :
\begin{eqnarray*}
 \phi(\tau,z) &=&  \sum_{N',R'} C_\phi(N',R') e(N' \tau + R' {^t z})
\end{eqnarray*}
for $(\tau,z) \in \H_{n'} \times \C^{(n',r-1)}$,
where $N'$ and $R'$ run over $L_{n'}^*$ and $\Z^{(n',r-1)}$, respectively, such that
$4 N' - R' \mathfrak{M}^{-1} {^t R'} \geq 0$.
Then $\phi$ belongs to $J_{k-\frac12,\mathfrak{M}}^{+(n')}$ if and only if 
$C_\phi(N',R') = 0$ unless
\begin{eqnarray*}
  \begin{pmatrix}
    N' & \frac12 R' \\ \frac12 {^t R'} & \mathfrak{M}
  \end{pmatrix}
  + \lambda {^t \lambda}  \in 4 L_{n'}^*
\end{eqnarray*}
with some $\lambda \in \Z^{(n'+r-1,1)}$.

This condition requires a condition on $\mathfrak{M}$.
For example, if $r-1 = 1$ and if  $J_{k-\frac12,\mathfrak{M}}^{+(n')} \neq \emptyset$,
then $\mathfrak{M} \equiv 0, 3 \!\! \mod 4$.

We define
$J_{k-\frac12,\mathfrak{M}}^{+(n')\, cusp} 
:= J_{k-\frac12,\mathfrak{M}}^{+(n')} \cap J_{k-\frac12,\mathfrak{M}}^{(n')\, cusp}$.

If $r = 1$, then
$J_{k-\frac12,\mathfrak{M}}^{+(n')} = J_{k-\frac12,\emptyset}^{+(n')} = M_{k-\frac12}^{+(n')}$
and
$J_{k-\frac12,\mathfrak{M}}^{+(n')\, cusp} = J_{k-\frac12,\emptyset}^{+(n')\, cusp} 
= S_{k-\frac12}^{+(n')}$.

\begin{lemma}
 Let $F \in M_{k-\frac12}^{+(n'+r)}$. We take the Fourier-Jacobi expansion
 \begin{eqnarray*}
   F\left(\begin{pmatrix} \tau & z \\ ^t z & \omega \end{pmatrix} \right)
   &=&
   \sum_{\mathfrak{M} \in L_r^*} f_{\mathfrak{M}}(\tau,z) e(\mathfrak{M} \omega),
 \end{eqnarray*}
 where $\tau \in \H_{n'}$, $z \in \C^{(n',r)}$ and $\omega \in \H_r$.
 Then $f_{\mathfrak{M}} \in J_{k-\frac12,\mathfrak{M}}^{+(n')}$.
 Moreover, if $F \in S_{k-\frac12}^{+(n'+r)}$, 
 then $f_{\mathfrak{M}} \in J_{k-\frac12,\mathfrak{M}}^{+(n')\, cusp}$.
\end{lemma}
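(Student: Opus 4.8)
The plan is to reduce the whole statement to a single comparison of Fourier coefficients. Writing the full Fourier expansion $F(\tau') = \sum_{T \in L_{n'+r}^*} C_F(T)\, e(T\tau')$ with $\tau' = \left(\begin{smallmatrix}\tau & z\\ {}^tz & \omega\end{smallmatrix}\right)$, and splitting each index into blocks $T = \left(\begin{smallmatrix} N' & \frac12 R'\\ \frac12\,{}^tR' & \mathfrak{M}\end{smallmatrix}\right)$ with $N' \in L_{n'}^*$ and $R' \in \Z^{(n',r)}$, the elementary identity $\mbox{tr}(T\tau') = \mbox{tr}(N'\tau) + \mbox{tr}(R'\,{}^tz) + \mbox{tr}(\mathfrak{M}\omega)$ shows that collecting the terms with a fixed lower-right block $\mathfrak{M}$ yields
\begin{eqnarray*}
  f_{\mathfrak{M}}(\tau,z) &=& \sum_{N',R'} C_F\!\left(\begin{pmatrix} N' & \frac12 R'\\ \frac12\,{}^tR' & \mathfrak{M}\end{pmatrix}\right) e(N'\tau + R'\,{}^tz).
\end{eqnarray*}
Hence $f_{\mathfrak{M}}$ is holomorphic, its $(N',R')$-th Fourier coefficient is exactly $C_F\!\left(\begin{smallmatrix} N' & \frac12 R'\\ \frac12\,{}^tR' & \mathfrak{M}\end{smallmatrix}\right)$, and the support condition $T \geq 0$ restricts the nonzero coefficients to indices with $4N' - R'\mathfrak{M}^{-1}\,{}^tR' \geq 0$ when $\mathfrak{M}$ is nondegenerate; this is the Fourier expansion demanded of a Jacobi form of index $\mathfrak{M}$.

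Next I would verify the transformation law. Each $\gamma = (M,[(\lambda,\mu),\kappa]) \in \Gamma^J_{n',r}$ with $M \in \Gamma_0^{(n')}(4)$, $\lambda,\mu \in \Z^{(n',r)}$ and $\kappa = {}^t\kappa \in \Z^{(r,r)}$ embeds into $\Gamma_0^{(n'+r)}(4)$ (one checks the lower-left block stays in $4\Z^{(n'+r,n'+r)}$) and lifts through the theta cover to $\Gamma_0^{(n'+r)}(4)^*$. Applying the modular transformation law of $F$ under $\Gamma_0^{(n'+r)}(4)^*$ and expanding both sides as Fourier-Jacobi series in $\omega$, the uniqueness of the Fourier-Jacobi coefficients gives $f_{\mathfrak{M}}|_{k-\frac12,\mathfrak{M}}\gamma = f_{\mathfrak{M}}$ for the matrix-index half-integral slash operator of~\cite{matrix_integer}. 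The genuine work lies in one point: one must check that the theta multiplier $\theta^{(n'+r)}(\gamma\tau')\,\theta^{(n'+r)}(\tau')^{-1}$ controlling the weight $k-\frac12$ action of $F$ restricts, on this embedded Jacobi subgroup, to precisely the theta part of the Jacobi factor of automorphy used to define $J_{k-\frac12,\mathfrak{M}}^{(n')}$. I would settle this by splitting the summation lattice $\Z^{(n'+r,1)}$ of $\theta^{(n'+r)}$ into its first $n'$ and last $r$ coordinates and matching the resulting factor against the factor of automorphy recalled in \S\ref{ss:factors_automorphy}; for $n' > 1$ the Fourier-expansion half of the definition is then automatic by Koecher's principle.

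The plus-space membership is immediate from the coefficient identity. By the definition of the generalized plus-space (\S\ref{s:lin_iso_jacobi} and~\cite{matrix_integer}), $f_{\mathfrak{M}} \in J_{k-\frac12,\mathfrak{M}}^{+(n')}$ if and only if its coefficient at $(N',R')$ vanishes unless $\left(\begin{smallmatrix} N' & \frac12 R'\\ \frac12\,{}^tR' & \mathfrak{M}\end{smallmatrix}\right) + \lambda\,{}^t\lambda \in 4L_{n'+r}^*$ for some $\lambda \in \Z^{(n'+r,1)}$. Since that coefficient equals $C_F\!\left(\begin{smallmatrix} N' & \frac12 R'\\ \frac12\,{}^tR' & \mathfrak{M}\end{smallmatrix}\right)$, this is word for word the generalized plus-space condition defining $M_{k-\frac12}^{+(n'+r)}$ (cf.~\cite{Ib}) at the index $T = \left(\begin{smallmatrix} N' & \frac12 R'\\ \frac12\,{}^tR' & \mathfrak{M}\end{smallmatrix}\right)$. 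Thus the plus-space condition on $F$ transfers verbatim to $f_{\mathfrak{M}}$.

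Finally, for the cuspidal statement I would use that $F \in S_{k-\frac12}^{+(n'+r)}$ forces $C_F(T) = 0$ unless $T > 0$. For $T = \left(\begin{smallmatrix} N' & \frac12 R'\\ \frac12\,{}^tR' & \mathfrak{M}\end{smallmatrix}\right)$, the Schur complement identity shows $T > 0$ is equivalent to $\mathfrak{M} > 0$ together with $4N' - R'\mathfrak{M}^{-1}\,{}^tR' > 0$. Hence $f_{\mathfrak{M}} = 0$ whenever $\mathfrak{M}$ is degenerate, while for $\mathfrak{M} > 0$ the coefficient $C_{f_{\mathfrak{M}}}(N',R')$ vanishes unless $4N' - R'\mathfrak{M}^{-1}\,{}^tR' > 0$, which is exactly the defining condition of a Jacobi cusp form; therefore $f_{\mathfrak{M}} \in J_{k-\frac12,\mathfrak{M}}^{+(n')\,cusp}$. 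Alternatively, the same conclusion follows from the matrix-index analogue of the boundedness criterion in Lemma~\ref{lem:cusp_criterium}, which I expect to be the cleaner route if one wishes to avoid the Fourier-support characterization of cuspidality at all the cusps.
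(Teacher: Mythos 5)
Your core argument coincides with the paper's: the paper disposes of this lemma in a single line (``It is obvious from the definition of the plus-space $J_{k-\frac12,\mathfrak{M}}^{+(n')}$''), precisely because the generalized plus-space condition is a condition purely on Fourier coefficients, and the $(N',R')$-th coefficient of $f_{\mathfrak{M}}$ is literally $C_F\left(\smat{N'}{\frac12 R'}{\frac12\,{}^tR'}{\mathfrak{M}}\right)$, so the condition on $F$ transfers verbatim --- exactly your third paragraph. The content of your second paragraph (that $f_{\mathfrak{M}}$ lies in $J_{k-\frac12,\mathfrak{M}}^{(n')}$ at all, via the embedding of $\Gamma_{n',r}^J$ with $M\in\Gamma_0^{(n')}(4)$ into $\Gamma_0^{(n'+r)}(4)$ and the matching of theta multipliers under the splitting of the lattice $\Z^{(n'+r,1)}$) is treated by the paper as already contained in the definitions and results of \cite{matrix_integer}; your sketch of it is the standard one and is sound, including the observation that the embedded Heisenberg elements have vanishing lower-left block so that no level condition is violated.

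The one place where your primary route has a genuine gap is the cuspidality claim. For $\Gamma_0(4)$-type groups, cuspidality is not captured by the Fourier support at the standard cusp alone: the paper's definition of a Jacobi cusp form requires the vanishing condition in the expansion of $\phi^2|_{2k-1,2m}\gamma$ for \emph{every} $\gamma\in\Gamma_{n,1}^J$, and correspondingly $F\in S_{k-\frac12}^{+(n'+r)}$ means vanishing at all cusps. So ``$F$ cuspidal forces $C_F(T)=0$ unless $T>0$'' is only the standard-cusp half of the hypothesis, and your Schur-complement computation verifies the Jacobi cusp condition only at $\gamma=1$; as stated, this does not yield membership in $J_{k-\frac12,\mathfrak{M}}^{(n')\,cusp}$ in the paper's sense. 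The alternative you flag in your last sentence should be the main argument rather than a fallback: cuspidality of $F$ gives boundedness of $\det(\mathrm{Im}\,\tau')^{\frac12(k-\frac12)}|F(\tau')|$ on $\H_{n'+r}$, and extracting $f_{\mathfrak{M}}$ by the usual integral in $\omega$ yields the bound of the matrix-index analogue of Lemma~\ref{lem:cusp_criterium} for $f_{\mathfrak{M}}$, which is cusp-uniform. This Ziegler/Klingen-type analytic step is also why the paper can afford its one-line proof: the only genuinely new content of the lemma is the plus-space transfer, which you and the paper handle identically.
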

\begin{proof}
It is obvious from the definition of the plus-space $J_{k-\frac12,\mathfrak{M}}^{+(n')}$.
\end{proof}

In particular the function $\phi_m$ in \S\ref{s:maass_relation} belongs to the
plus-space $J_{k-\frac12,m}^{+(2n-2)}$. 

Let $\M$ be as above.
There exists a  linear isomorphism map $\iota_\M$ from $J_{k,\M}^{(n)}$
to $J_{k-\frac12,\mathfrak{M}}^{(n)+}$ (cf.~\cite{EZ} (for $r=n=1$), ~\cite{Ib} (for $r =1$, $n > 1$), ~\cite{matrix_integer} (for $r > 1$, $n \geq 1$)).
This map $\iota_\M$ is given as follows.

We assume $\psi \in J_{k,\M}^{(n')}$
and denote by $C_{\psi}( * , * )$ the Fourier coefficients of $\psi$.
For $\tau \in \H_{n'}$ and for $z = (z_1,z_2) \in \C^{(n',r)}$ ($z_1 \in \C^{(n',r-1)}$, $z_2 \in \C^{(n',1)}$),
we take the theta decomposition
\begin{eqnarray*}
  \psi(\tau,z)
  &=&
   \sum_{\begin{smallmatrix} R \in \Z^{(n',1)} \\ R \!\! \mod 2 \Z^{(n',1)} \end{smallmatrix}}
   f_{R}(\tau,z_1) \vartheta_{1,R,L}(\tau,z_1,z_2),
\end{eqnarray*}
where
\begin{eqnarray*}
  f_{R}(\tau,z_1)
  &=&
  \sum_{N_1 \in L_{n'}^*, N_3 \in \Z^{(n',r-1)}}
  C_{\psi}(N_1, \begin{pmatrix}  N_3 &  R  \end{pmatrix})
  \\
  && \times
  e( (N_1 - \frac14 R {^t R})\tau + (N_3 - \frac12 R {^t L}) {^t z_1})
\end{eqnarray*}
and where the function $\vartheta_{1,R,L}$ is defined by
\begin{eqnarray*}
   \vartheta_{1,R,L}(\tau,z_1,z_2)
   &:=&
   \vartheta_{1,\left(\begin{smallmatrix} R \\ L \end{smallmatrix} \right)}(\tau, \frac12 z_1 L + z_2) \\
   &=&
     \sum_{\begin{smallmatrix}
                 x \in \Z^{(n',1)} \\
                x \equiv \left(\begin{smallmatrix} R \\ L \end{smallmatrix} \right) \!\! \mod 2
                \end{smallmatrix}}
  e\left(\frac14 x  {^t x} \tau + x {^t \left( \frac12 z_1 L + z_2 \right)} \right)
\end{eqnarray*}
 (cf. \cite[Lemma 4.1]{matrix_integer}).
We put
\begin{eqnarray*}
  \iota_\M(\psi)(\tau,z_1)
  & := & 
  \sum_{R \in \Z^{(n,1)}/(2 \Z^{(n,1)})} f_{R}(4\tau, 4 z_1).
\end{eqnarray*}
Then $\iota_\M(\psi)$ belongs to $J_{k-\frac12,\mathfrak{M}}^{+(n')}$
(cf.~\cite[Proposition 4.4]{matrix_integer}).
If $\psi$ is a Jacobi cusp form, then $\iota_\M(\psi)$ is also a Jacobi cusp form.
If $r=1$, then $\M = 1$ and $\iota_1(\psi)$ is a Siegel modular form
(cf. \cite{EZ}, \cite{Ib}).
\begin{prop}[\cite{matrix_integer}]\label{prop:matrix_integer_isom}
We take a matrix $\M = \begin{pmatrix} \M_1 & \frac12 L \\ \frac12 ^t L & 1  \end{pmatrix} \in L_r^+$.
Let $k$ be an even integer.
The map $\iota_{\M} $ gives the linear isomorphisms:
\begin{eqnarray*}
  J_{k,\M}^{(n')} &\cong& J_{k-\frac12,\mathfrak{M}}^{+(n')}, \\
  J_{k,\M}^{(n')\, cusp} &\cong& J_{k-\frac12,\mathfrak{M}}^{+(n')\, cusp}.
\end{eqnarray*}
In the case of $r =1$ (it means $\M = 1$, $\mathfrak{M} = \emptyset$ and $J_{k-\frac12,\mathfrak{M}}^{+(n')} = M_{k-\frac12}^{+(n')}$),
these isomorphisms have been shown
in~\cite{EZ} (for $n'=1$) and in~\cite{Ib} (for $n' > 1$).
\end{prop}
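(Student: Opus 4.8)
The plan is to reduce the statement to a bijection between the Fourier expansions on the two sides. Linearity of $\iota_\M$ is immediate, since the theta decomposition $\psi=\sum_R f_R\,\vartheta_{1,R,L}$ and the rescaling $f_R(\tau,z_1)\mapsto f_R(4\tau,4z_1)$ are both linear in $\psi$; and the fact that $\iota_\M(\psi)$ lands in the plus-space $J_{k-\frac12,\mathfrak{M}}^{+(n')}$ is already recorded above, following~\cite[Proposition 4.4]{matrix_integer}. Inspecting the definition, the Fourier coefficient of $\iota_\M(\psi)$ at an index $(N',R')$ equals a single Fourier coefficient $C_\psi(N_1,(N_3\ R))$ of $\psi$, where the class $R\bmod 2\Z^{(n',1)}$ and the original index are recovered from $(N',R')$ by completing the square in the last variable $z_2$ and undoing the factor $4$. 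Thus $\iota_\M$ is governed by a reindexing of Fourier coefficients.

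Injectivity and surjectivity then both reduce to the claim that this reindexing is a bijection from the index set supporting $J_{k,\M}^{(n')}$, cut out by the cone condition $4N-R\,\M^{-1}\,{}^tR\ge 0$ of the Fourier expansion, onto the index set supporting the plus-space. The latter is cut out precisely by the plus-space condition $C_\Phi(N',R')=0$ unless $\smat{N'}{\frac12 R'}{\frac12\,{}^tR'}{\mathfrak{M}}+\lambda\,{}^t\lambda\in 4L_{n'}^*$ for some $\lambda$, which is exactly the image of the completing-the-square map. Hence $\iota_\M(\psi)=0$ forces every $C_\psi=0$, giving injectivity; and for $\Phi$ in the plus-space one recovers the components $f_R$ from its Fourier coefficients, undoes the rescaling, and sets $\psi:=\sum_R f_R\,\vartheta_{1,R,L}$ to produce a preimage, giving surjectivity.

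The main obstacle is to show that the reassembled $\psi$ genuinely transforms as a Jacobi form of integral weight $k$ under the full group $\Gamma_{n',r}^J$ --- not merely that it is a formal series with the correct Fourier support --- and in particular that it is invariant under the $\Gamma_{n'}$-part. Here the metaplectic theta multiplier attached to $\theta^{(n')}$ on $\Gamma_0^{(n')}(4)$, which governs $\Phi$ on the half-integral side, must be matched, after the substitution $\tau\mapsto 4\tau$, against the trivial automorphy factor of the integral-weight $\psi$; this is exactly the role of the factor $4$ in the definition of $\iota_\M$, under which the theta transformation formula for $\theta^{(n')}$ on $\Gamma_0^{(n')}(4)$ passes to the Weil-representation cocycle acting on the vector $(f_R)_R$, the two cocycles coinciding. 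Once this is established for $r=1$ --- the Eichler--Zagier case $n'=1$ of~\cite{EZ} and Ibukiyama's case $n'>1$ of~\cite{Ib} --- the general $r$ follows by treating the remaining variables $z_1$ as parameters, since the theta decomposition is carried out only in the single variable $z_2$, as in~\cite{matrix_integer}. Finally, the cusp-form statement is immediate, since $\iota_\M$ and its inverse both preserve the strict support condition $4N-R\,\M^{-1}\,{}^tR>0$ that defines cusp forms.
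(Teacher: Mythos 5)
The first thing to say is that the paper contains no proof of this proposition: it is imported from \cite{matrix_integer}, with the $r=1$ cases credited to \cite{EZ} and \cite{Ib}, so there is no in-paper argument to compare yours against. Your sketch correctly reconstructs the strategy of those references. In particular, the correspondence of Fourier supports you describe is exactly the completing-the-square identity
\begin{equation*}
\begin{pmatrix} N' & \frac12 R' \\ \frac12\,{}^tR' & \mathfrak{M} \end{pmatrix}
\;=\;
4\begin{pmatrix} N_1 & \frac12 N_3 \\ \frac12\,{}^tN_3 & \M_1 \end{pmatrix}
\;-\;
\lambda\,{}^t\lambda,
\qquad
\lambda = \begin{pmatrix} R \\ L \end{pmatrix},
\end{equation*}
with $(N',R')=(4N_1-R\,{}^tR,\;4N_3-2R\,{}^tL)$, which matches the plus-space condition on the nose. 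One small unstated point: for both injectivity and your recovery of the components $f_R$ from $\Phi$, you need that distinct classes $R \bmod 2\Z^{(n',1)}$ contribute to disjoint sets of indices $(N',R')$; this is true but deserves a line --- the diagonal entries give $N'_{ii}\equiv -R_i^2 \bmod 4$, which determines each $R_i \bmod 2$.

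The soft spot is the final reduction. The phrase that the general-$r$ case ``follows by treating the remaining variables $z_1$ as parameters'' understates what must be done: the half-integral side is a Jacobi form of matrix index $\mathfrak{M}$ rather than a Siegel modular form, the Heisenberg translations in the $z_1$-directions interact with the theta characteristics through the shifted argument $\frac12 z_1 L + z_2$ (and the rescaling $(\tau,z_1)\mapsto(4\tau,4z_1)$ is what converts the index $\M_1-\frac14 L\,{}^tL$ of the $f_R$ into $\mathfrak{M}=4\M_1-L\,{}^tL$), so Ibukiyama's theorem does not apply verbatim and the surjectivity direction --- that the plus-space support condition forces the reassembled vector $(f_R)_R$ to transform under the correct Weil-representation cocycle --- is precisely the content of \cite{matrix_integer}. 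Since you cite that paper for this step, your argument is in effect the same citation the paper itself makes; as an account of why the proposition holds it is sound and follows the intended route, but the cocycle-matching step you flag as ``the main obstacle'' is asserted rather than proved, exactly as in the source being quoted.
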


Aa for the relation between the Petersson inner products and the linear isomorphism
map $\iota_\M$ is known as follows.

\begin{lemma}\label{lem:iota_petersson}
For $\psi_i \in J_{k,\M}^{(n')\, cusp}$ $(i=1,2)$
we set $\phi_i = \iota_{\M}(\psi_i) \in J_{k-\frac12,\mathfrak{M}}^{+ (n')\, cusp}$.
Then we have
\begin{eqnarray*}
  \langle \psi_1, \psi_2 \rangle
  &=&
  2^{2n'(k-1)}
  \langle \phi_1, \phi_2 \rangle .
\end{eqnarray*}
\end{lemma}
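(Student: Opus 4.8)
The plan is to compare the two Petersson inner products by tracing how the integral defining each one transforms under the map $\iota_{\M}$. The lemma asserts that $\langle \psi_1, \psi_2 \rangle = 2^{2n'(k-1)} \langle \phi_1, \phi_2 \rangle$, so I would first write out both inner products explicitly using the definitions from \S\ref{ss:jacobi_forms_of_matrix_index} (for $\psi_i \in J_{k,\M}^{(n')\,cusp}$ with $r=1$, so $\M = 1$) and \S\ref{ss:def_jacobi_half_weight} (for $\phi_i \in J_{k-\frac12,\mathfrak{M}}^{+(n')\,cusp}$). The key observation is that $\iota_{\M}$ acts on the theta components by the substitution $(\tau,z_1) \mapsto (4\tau, 4z_1)$ together with a summation over $R \bmod 2\Z^{(n',1)}$, so the comparison reduces to understanding how this rescaling affects the integrand and the fundamental domain.

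The main steps I would carry out are as follows. First, I would insert the theta decomposition of each $\psi_i$ into the defining integral for $\langle \psi_1, \psi_2 \rangle$ and use the orthogonality of the theta functions $\vartheta_{1,R,L}$ under integration over the $z$-variable (i.e.\ the fibers $\C^{(n',1)}/(\Z\tau + \Z)$); this collapses the Jacobi inner product into a sum over $R$ of inner products of the theta coefficients $f_R$ against a Gaussian weight. Second, I would perform the analogous reduction for $\langle \phi_1, \phi_2 \rangle$, where by construction $\iota_{\M}(\psi_i) = \sum_R f_R(4\tau,4z_1)$. Third, I would implement the change of variables $\tau \mapsto 4\tau$ (hence $v \mapsto 4v$, $u \mapsto 4u$, etc.) to match the two integrals, carefully tracking the Jacobian of the substitution against all $n'$ real dimensions, the factor $\det(v)^{k-n'-\frac52}$ versus $\det(v)^{k-n'-2}$ coming from the different weight normalizations, and the index normalization $[\Gamma_{n'}:\Gamma_0^{(n')}(4)]^{-1}$ appearing in the half-integral definition. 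The powers of $2$ accumulated from the rescaling of the $2n'$ real variables $u,v$ (and the $z$-variables) together with the weight-dependent determinant factor should combine to produce exactly $2^{2n'(k-1)}$.

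The hard part will be bookkeeping the fundamental domains and the factor $[\Gamma_{n'}:\Gamma_0^{(n')}(4)]^{-1}$ correctly. The integral for $\psi_i$ is over $\mathcal{F}_{n',1} = \Gamma_{n',1}^J \backslash (\H_{n'} \times \C^{(n',1)})$, while the integral for $\phi_i$ is over $\mathcal{F}_{n',1,4} = \Gamma_{n',1}^J(4)\backslash(\H_{n'}\times\C^{(n',1)})$; the substitution $\tau \mapsto 4\tau$ conjugates the level-one domain into a level-four-type domain, and I must verify that the index factor precisely compensates the difference in volume so that no spurious index-dependent constant survives. I expect this matching of the congruence-subgroup domain with the scaled full domain, rather than the purely local Gaussian computation, to be the delicate point; the remaining determinant and power-of-$2$ computations are routine once the domains are aligned.
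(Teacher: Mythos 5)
The paper does not actually prove this lemma: its ``proof'' is a citation to \cite[Theorem 5.4]{EZ} (for $r=1$, $n'=1$), to \cite[p.~2051]{KaKa} (for $r=1$, $n'>1$), and to \cite[Lemma 3.1]{RSJacobi} (for $r>1$). Your plan --- theta decomposition, orthogonality of the $\vartheta_{1,R,L}$ in the $z$-fibers, the rescaling $(\tau,z_1)\mapsto(4\tau,4z_1)$, and volume/index bookkeeping against $\left[\Gamma_{n'}:\Gamma_0^{(n')}(4)\right]^{-1}$ --- is exactly the strategy of those references, so you are reconstructing the standard proof rather than taking a new route; as an outline it is the correct approach.

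There are, however, two concrete gaps in the write-up. First, your setup is internally inconsistent about $r$: you fix $r=1$, $\M=1$, but then compare against the half-integral inner product of \S\ref{ss:def_jacobi_half_weight}, which carries a $z$-variable, the Gaussian $e^{-4\pi m v^{-1}[y]}$ and the measure $\det(v)^{k-n'-\frac52}$ --- that is the case $r=2$, $\mathfrak{M}=m$ scalar, which is also the case the paper actually uses (e.g.\ in Lemma~\ref{lem:u_adj}). For $r=1$ the image $\iota_1(\psi)$ is a Siegel cusp form in $S^{+(n')}_{k-\frac12}$, there is no $z$-fiber on the half-integral side at all, and the relevant measure is $\det(v)^{k-\frac12-n'-1}$; as written, the two sides of your comparison belong to different cases of the lemma, and in the $r=2$ case the theta functions $\vartheta_{1,R,L}$ carry the extra shift $\frac12 z_1 L$ that your reduction must track. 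Second, theta orthogonality only collapses the \emph{integral-weight} side. On the half-integral side $\phi_i=\sum_R f^{(i)}_R(4\tau,4z_1)$, and you still need the cross terms $R\neq R'$ to integrate to zero; no theta functions remain there, so this must come from the plus-space structure: the $(\tau,z_1)$-Fourier supports of $f_R(4\tau,4z_1)$ for distinct $R \bmod 2\Z^{(n',1)}$ are disjoint, and since the translations $\tau\mapsto\tau+S$, $z_1\mapsto z_1+\mu$ lie in $\Gamma^{J}_{n',1}(4)$, integrating over $u$ and $x_1$ kills the cross terms. Relatedly, the substitution $\tau\mapsto 4\tau$ does not conjugate $\Gamma_0^{(n')}(4)$ into $\Gamma_{n'}$ but onto the subgroup $\left\{\smat{A}{B}{C}{D}\in\Gamma_{n'} : B\equiv 0 \bmod 4\right\}$; the domains are then matched because $\sum_R f_R\overline{f'_R}$ times the Gaussian and the determinant factor is invariant under all of $\Gamma^{J}_{n',1}$ (the $f_R$ transform as a unitary vector-valued system), so the integral over the conjugated domain equals $\left[\Gamma_{n'}:\Gamma_0^{(n')}(4)\right]$ times the integral over $\mathcal{F}_{n',1}$ --- this is the mechanism that cancels the index factor you flagged. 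Finally, a small slip: $u$ and $v$ each have $n'(n'+1)/2$ real coordinates (and $x_1,y_1$ have $n'$ each), not ``$n'$ real dimensions,'' which matters when you extract the power of $2$. With these repairs your computation should indeed produce the constant $2^{2n'(k-1)}$.
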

\begin{proof}
This lemma has been shown in~\cite[Theorem 5.4]{EZ} (for $r = 1$ and $n'=1$),
in \cite[p.2051]{KaKa} (for $r = 1$ and $n' > 1$),
and in \cite[Lemma 3.1]{RSJacobi} (for $r > 1$ and $n' \geq 1$).
\end{proof}

In the followings we consider the case $r = 2$.

Let $a \in \Qq_{>0}$, $b \in \Qq$ and $\M \in L_2^+$.
For $\psi \in J_{k,\M}^{(n')}$ we define the function
\begin{eqnarray*}
  (\psi|U_{\left(\begin{smallmatrix} a & 0 \\ b & 1 \end{smallmatrix} \right)})(\tau,z)
  &:=&
  \psi(\tau,z \left( \begin{smallmatrix} a & 0 \\ b & 1 \end{smallmatrix} \right))
\end{eqnarray*}

\begin{lemma}\label{lem:iota_u}
Let $r = 2$ and $\M = \left( \begin{smallmatrix} * & *  \\ * & 1 \end{smallmatrix} \right) \in L_2^+$. 
Let $\psi \in J_{k,\M}^{(n')}$.
Then, for any $a\in \Z_{>0}$ and for any $b\in \Z$,
we have $\psi|U_{\left(\begin{smallmatrix} a & 0 \\ b & 1 \end{smallmatrix} \right)}
\in J_{k,\M \left[\left(\begin{smallmatrix} a & 0 \\ b & 1 \end{smallmatrix} \right) \right]}^{(n')}$
and we have
\begin{eqnarray*}
  \iota_{\M}(\psi) | U_a
  &=&
  \iota_{\M \left[\left(\begin{smallmatrix} a & 0 \\ b & 1 \end{smallmatrix} \right) \right]}
  \left( \psi|U_{\left(\begin{smallmatrix} a & 0 \\ b & 1 \end{smallmatrix} \right)} \right).
\end{eqnarray*}
\end{lemma}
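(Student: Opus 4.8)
Lemma~\ref{lem:iota_u} asks us to prove two things: first, that the operator $U_{\left(\begin{smallmatrix} a & 0 \\ b & 1 \end{smallmatrix}\right)}$ maps $J_{k,\M}^{(n')}$ into $J_{k,\M[\left(\begin{smallmatrix} a & 0 \\ b & 1 \end{smallmatrix}\right)]}^{(n')}$, and second, that this operator is intertwined with the simpler scaling operator $U_a$ on half-integral weight forms via the isomorphism $\iota$. Let me plan each part.

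The plan is to prove the transformation claim first by direct computation on the factor of automorphy. Setting $W := \left(\begin{smallmatrix} a & 0 \\ b & 1 \end{smallmatrix}\right)$, the substitution $z \mapsto zW$ produces a new function whose index should be $\M[W] = {}^tW \M W$. I would check condition (1) of the definition by verifying that $\psi|U_W$ transforms correctly under $\Gamma_{n',2}^J$; this reduces to the identity $\mathcal{M}(z\lambda\text{-type terms}) = (\M[W])(\text{transformed terms})$, which holds because conjugating the index by $W$ exactly compensates for the right-multiplication $z \mapsto zW$ in the Jacobi action. The Fourier expansion condition (2) follows since replacing $z$ by $zW$ reindexes the Fourier coefficients $c(N,R) \mapsto c(N, R\,{}^tW^{-1})$ (roughly), preserving the support condition $4N - R\M^{-1}{}^tR \geq 0$ because $\M[W]^{-1} = W^{-1}\M^{-1}\,{}^tW^{-1}$; this is the routine part.

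The main content is the intertwining identity, and the natural strategy is to compare the theta decompositions on both sides. The key observation is that $U_a$ on the half-integral side acts as $z_1 \mapsto a z_1$, while $U_W$ on the integral side couples the two columns of $z=(z_1,z_2)$ via $z_2 \mapsto b z_1 + z_2$ and $z_1 \mapsto a z_1$. I would trace how $U_W$ acts on the theta decomposition $\psi = \sum_R f_R(\tau,z_1)\,\vartheta_{1,R,L}(\tau,z_1,z_2)$. The crucial point is that the theta function $\vartheta_{1,R,L}$ depends on $z_1,z_2$ only through the combination $\frac12 z_1 L + z_2$; since $U_W$ sends $z_2 \mapsto b z_1 + z_2$, and the new index $\M[W]$ has a correspondingly shifted off-diagonal entry $L' = $ (entry involving $b$), one must check that the theta part transforms consistently while the $f_R$ part picks up exactly the rescaling $z_1 \mapsto a z_1$ that reproduces $U_a$ after applying $\iota$. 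Because $\iota_{\M}(\psi)(\tau,z_1) = \sum_R f_R(4\tau, 4z_1)$ by definition, and $U_a$ scales the $z_1$-argument by $a$, the identity should fall out once the bookkeeping of $f_R$ under $U_W$ is matched against the rescaled $f_R$ under $U_a$.

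The hard part will be the careful matching of the theta-function indices and the off-diagonal data $L$ before and after the conjugation by $W$, since $\M[W]$ changes the lower-right normalization only if $b$ interacts with the $(2,2)$-entry, and one must confirm that the residue classes $R \!\!\mod 2\Z^{(n',1)}$ indexing the theta decomposition are preserved. I would verify explicitly that the $(2,2)$-entry of $\M[W]$ remains $1$ (which holds since $W$ fixes the second basis vector, so the lower-right corner of ${}^tW\M W$ equals $\M_{22}=1$), guaranteeing that $\iota_{\M[W]}$ is defined and has the same theta-normalization. Granting that, the equality of the two sides is an identity of Fourier series, and I expect it to close by comparing coefficients after substituting the explicit formula for $f_R$ and using that the Fourier support transforms as described above.
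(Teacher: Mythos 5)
Your proposal is correct and takes essentially the same route as the paper: the first claim is a direct check against the definition of Jacobi forms, and the intertwining identity is an equality of Fourier expansions verified through the theta decomposition that defines $\iota_{\M}$ (the paper itself gives no more detail, deferring the coefficient comparison to \cite[Proposition 4.3]{CE_N}). One remark on your bookkeeping: you take the action to be $(z_1,z_2)\mapsto (az_1,\,bz_1+z_2)$, i.e.\ $z\mapsto z\,{}^t W$ with $W=\smat{a}{0}{b}{1}$, rather than the printed $z\mapsto zW$ (which would send $(z_1,z_2)$ to $(az_1+bz_2,\,z_2)$ and would produce index $W\M\,{}^t W$, whose lower-right entry is $b^2\M_1+bL+1\neq 1$ in general); your reading is in fact the one consistent with the asserted index $\M\left[\smat{a}{0}{b}{1}\right]={}^tW\M W$, with lower-right entry $1$ and off-diagonal datum $L'=aL+2b$ exactly as your theta-matching uses, and also with the substitution $z\,{}^tW_d$ appearing in Lemma~\ref{lem:e_E}, so your version is the coherent interpretation of the statement.
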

\begin{proof}
The first statement follows directly from the definition of Jacobi forms.
And the second statement can be shown by comparing the Fourier coefficients
of both sides.
The reader is referred to~\cite[Proposition 4.3]{CE_N} for the detail of the calculation
for the second statement.
\end{proof}

\begin{lemma}\label{lem:iota_v}
Let $r = 2$ and $\M = \left( \begin{smallmatrix} * & *  \\ * & 1 \end{smallmatrix} \right) \in L_2^+$. 
For any odd prime $p$ and for $0\leq \alpha \leq n'$,
let $\tilde{V}_{\alpha,n'-\alpha}(p^2)$ and $V_{\alpha,n'-\alpha}(p^2)$ be index-shift maps
defined in \S\ref{ss:hecke_operators}.
Then, for any $\psi \in J_{k,\mathcal{M}}^{(n')}$ we have
\begin{eqnarray}\label{id:iota_hecke}
 \iota_{\mathcal{M}}(\psi)| \tilde{V}_{\alpha,n'-\alpha}(p^2)
 &=&
 p^{k(2n'+1) - n' (n'+\frac72) + \frac12 \alpha }\
 \iota_{\mathcal{M}[\smat{p}{ }{ }{1}]}(\psi| V_{\alpha,n'-\alpha}(p^2)).
\end{eqnarray}
\end{lemma}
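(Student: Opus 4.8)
The plan is to prove~(\ref{id:iota_hecke}) by comparing Fourier coefficients. First I would check that both sides lie in the same space. Writing $\M=\begin{pmatrix}\M_1 & \frac12 L\\ \frac12 L & 1\end{pmatrix}$ (so that $\M_1,L$ are scalars for $r=2$), a direct computation gives $\M[\smat{p}{ }{ }{1}]=\begin{pmatrix}p^2\M_1 & \frac12 pL\\ \frac12 pL & 1\end{pmatrix}$, whose associated scalar index is $4p^2\M_1-(pL)^2=p^2\mathfrak{M}$. Hence $\iota_{\M[\smat{p}{ }{ }{1}]}(\psi|V_{\alpha,n'-\alpha}(p^2))$ lies in $J_{k-\frac12,p^2\mathfrak{M}}^{+(n')}$, and since $\tilde{V}_{\alpha,n'-\alpha}(p^2)$ raises the index of $\iota_\M(\psi)\in J_{k-\frac12,\mathfrak{M}}^{+(n')}$ by $p^2$, the left-hand side lies there too. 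It therefore suffices to match the two families of Fourier coefficients.

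Next I would record the action of the integral-weight map on Fourier coefficients. Expanding the double-coset sum defining $V(\mbox{diag}(1_\alpha,p1_{n'-\alpha},p^2 1_\alpha,p1_{n'-\alpha}))$ together with the Heisenberg summation over $u,v\in(\Z/p\Z)^{(n',1)}$, one obtains a finite closed formula expressing the coefficients of $\psi|V_{\alpha,n'-\alpha}(p^2)$ as explicit $p$-weighted sums of coefficients of $\psi$ at transformed indices; this is the matrix-index analogue of the computation carried out in~\cite{CE_N} and used for Lemma~\ref{lem:iota_u}. Passing to $\iota_{\M[\smat{p}{ }{ }{1}]}$ then amounts, through the theta decomposition recalled above, to the rescaling $(\tau,z_1)\mapsto(4\tau,4z_1)$ together with the sum over theta classes $R\bmod 2\Z^{(n',1)}$, so the coefficients of the right-hand side of~(\ref{id:iota_hecke}) become fully explicit in terms of those of $\psi$.

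In parallel I would compute the action of $\tilde{V}_{\alpha,n'-\alpha}(p^2)$ on the coefficients of $\iota_\M(\psi)$ directly from its definition in $\widetilde{G_{n',1}^J}$. The essential new ingredient is the metaplectic factor: the slash operator $|_{k-\frac12,\mathfrak{M}}$ carries the theta multiplier $\varphi(\tau)^{2k-1}$ with $|\varphi|^2=\det(M)^{-\frac12}|\det(C\tau+D)|$, and running it through a system of representatives for $\Gamma_0^{(n')}(4)^*\, Y\, \Gamma_0^{(n')}(4)^*$, where $Y=(\mbox{diag}(1_\alpha,p1_{n'-\alpha},p^2 1_\alpha,p1_{n'-\alpha}),p^{\alpha/2})$, introduces quadratic Gauss sums modulo the odd prime $p$ and further powers of the similitude $n(X)=p^2$. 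Since $\Gamma_0^{(n')}(4)\subset\Gamma_{n'}$ and $p$ is coprime to $4$, the level-$4$ coset data can be matched with the full-level data governing the integral-weight side, and the Gauss sums, the similitude normalization $n(X)^{\frac{n'(2k-1)}{4}-\frac{n'(n'+1)}{2}}$, and the scalar $p^{\alpha/2}$ built into $Y$ combine to reproduce exactly the constant $p^{k(2n'+1)-n'(n'+\frac72)+\frac12\alpha}$.

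The hard part will be the bookkeeping in this last step: tracking $\varphi^{2k-1}$ through the coset decomposition, identifying the Gauss-sum and determinant contributions, and pinning down every power of $p$ so that the normalizing constant emerges precisely as stated rather than up to an unknown factor. This is also where one must verify consistency with the prime $p=2$, for which~(\ref{id:iota_hecke}) is \emph{taken} as the definition of $\tilde{V}_{\alpha,n'-\alpha}(4)$ in \S\ref{ss:hecke_operators}: the odd-prime computation has to produce the same shape of identity so that the maps $\{\tilde{V}_{\alpha,n'-\alpha}(p^2)\}_p$ form a coherent family. Once the two Fourier-coefficient formulas are seen to coincide, the equality of Jacobi forms in~(\ref{id:iota_hecke}) follows.
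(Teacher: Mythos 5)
Your proposal is correct and takes essentially the same route as the paper: the paper's proof of Lemma~\ref{lem:iota_v} likewise consists of comparing the Fourier coefficients of both sides and simply notes that the computation is the same as in~\cite[Proposition 4.4]{CE_N}, which is exactly the coefficient bookkeeping (double-coset representatives, theta decomposition, metaplectic multiplier and Gauss sums for odd $p$) that you outline. The only difference is that the paper delegates the verification of the constant $p^{k(2n'+1)-n'(n'+\frac72)+\frac12\alpha}$ entirely to that reference, whereas you sketch where each power of $p$ arises.
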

\begin{proof}
The identity can be shown by comparing the Fourier coefficients of both sides.
The proof is the same as in~\cite[Proposition 4.4]{CE_N}.
\end{proof}
We remark that $\tilde{V}_{\alpha,n'-\alpha}(4)$ has been defined through
the identity~(\ref{id:iota_hecke}).
(See \S\ref{ss:hecke_operators}).

\section{Adjoint maps}\label{s:adjoint_maps}
In this section we introduce some adjoint maps for Jacobi forms
with respect to the Petersson inner product.

We assume $\mathcal{M} = \smat{*}{*}{ * }{1} \in L_2^+$ such that
$\det(2\M) = m$.
We remark that $m \equiv 0, 3 \!\! \mod 4$.
There exists $\M$ for any such natural number $m$.

Let $n'$ and $N$ be natural numbers.

\subsection{Adjoint map $U^*$ for Jacobi forms of integral weight}

For any $\psi \in J_{k,\mathcal{M}\left[\left(\begin{smallmatrix} N & 0 \\ 0 & 1 \end{smallmatrix} \right) \right]}^{(n')}$ we define the function
\begin{eqnarray*}
  \psi|U^*_{\left(\begin{smallmatrix} N & 0 \\ 0 & 1 \end{smallmatrix} \right) }
  &:=&
  N^{-2n'}
  \sum_{\lambda_1,\mu_1 \in (\Z/ N\Z)^{(n',1)}}
  \psi|U_{\left(\begin{smallmatrix} N^{-1} & 0 \\ 0 & 1 \end{smallmatrix} \right)} |_{k,\mathcal{M}}
  [((\lambda_1,0),(\mu_1,0)),0_2].
\end{eqnarray*}
\begin{lemma}
We obtain a map
\begin{eqnarray*}
  U^*_{\left(\begin{smallmatrix} N & 0 \\ 0 & 1 \end{smallmatrix} \right) }
  \ : \
  J_{k,\mathcal{M}\left[\left(\begin{smallmatrix} N & 0 \\ 0 & 1 \end{smallmatrix} \right) \right]}^{(n')}
  \rightarrow
  J_{k,\mathcal{M}}^{(n')}.
\end{eqnarray*}
Moreover,
if $\psi \in J_{k,\mathcal{M}\left[\left(\begin{smallmatrix} N & 0 \\ 0 & 1 
       \end{smallmatrix} \right) \right]}^{(n')\, cusp}$, 
       then $\psi|U^*_{\left(\begin{smallmatrix} N & 0 \\ 0 & 1 \end{smallmatrix} \right) }
\in J_{k,\mathcal{M}}^{(n')\, cusp}$.
\end{lemma}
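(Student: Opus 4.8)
The plan is to verify directly the two defining conditions of a Jacobi form of weight $k$ and index $\mathcal{M}$ of degree $n'$ for $\psi|U^*_{\smat{N}{0}{0}{1}}$, namely the transformation law under $\Gamma_{n',2}^J$ and the shape of the Fourier expansion, and then to read off the cusp condition from the Fourier coefficients. Holomorphy is immediate, since $\psi|U^*_{\smat{N}{0}{0}{1}}$ is a finite sum of holomorphic functions. Write $W := \smat{N}{0}{0}{1}$ and $\phi_0 := \psi|U_{\smat{N^{-1}}{0}{0}{1}}$; note that $\phi_0$ is only a formal object, since it transforms with index $\mathcal{M}$ under a group with non-integral entries, so Lemma~\ref{lem:iota_u} does not apply directly and the averaging is exactly what repairs integrality.

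The heart of the argument is the transformation law $(\psi|U^*_W)|_{k,\mathcal{M}}\gamma = \psi|U^*_W$ for $\gamma \in \Gamma_{n',2}^J$, which I would check on the two types of generators using the cocycle relations of \S\ref{ss:factors_automorphy}. For a Heisenberg generator $[((\alpha,\beta),\kappa)]$ the key point is that $\phi_0$ is invariant under the sublattice of integral Heisenberg elements whose first columns $\alpha_1,\beta_1$ lie in $N\Z^{(n',1)}$ (because pulling $\psi$ back by $\smat{N^{-1}}{0}{0}{1}$ rescales the first column of the translation by $N^{-1}$), while it is invariant in the second column directly. Hence $\{[((\lambda_1,0),(\mu_1,0)),0_2] : \lambda_1,\mu_1 \in (\Z/N\Z)^{(n',1)}\}$ is a transversal for the full integral Heisenberg group modulo this stabilizer, right multiplication by $[((\alpha,\beta),\kappa)]$ permutes the $N^{2n'}$ cosets, and the normalization $N^{-2n'}$ makes the average invariant. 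For an embedded $M \in \Gamma_{n'}$ I would use that the right action $z \mapsto zW^{-1}$ commutes with the symplectic action $z \mapsto {}^t(C\tau+D)^{-1}z$, so that $\phi_0|_{k,\mathcal{M}}M = \phi_0$ (using $\psi|M = \psi$), and that conjugation by $M$ permutes the representatives $(\lambda_1,\mu_1)$ through the action of $\Gamma_{n'}$ on $\Z^{2n'}$, preserving both the column structure and reduction modulo $N$; the sum is therefore again unchanged.

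For the Fourier expansion I would compute it explicitly: starting from $\psi = \sum C(N_1,R) e(N_1\tau)e({}^tRz)$ of index $\mathcal{M}[W]$, the operator $U_{\smat{N^{-1}}{0}{0}{1}}$ replaces the first-column frequency $R_1$ by $N^{-1}R_1$, and the average over $\lambda_1,\mu_1 \in (\Z/N\Z)^{(n',1)}$ projects onto the terms with $N \mid R_1$, restoring integrality and producing an expansion supported on $L_{n'}^* \times \Z^{(n',2)}$ of index $\mathcal{M}$; for $n' > 1$ the Koecher principle then guarantees condition~(2), while for $n' = 1$ the explicit expansion suffices. The cusp statement follows from the same computation: if $\psi$ is a cusp form, then $C(N_1,R) = 0$ unless $4N_1 - R(\mathcal{M}[W])^{-1}{}^tR > 0$, and tracking this strict inequality through the substitution and the projection shows that every surviving Fourier coefficient of $\psi|U^*_W$ satisfies $4N' - R'\mathcal{M}^{-1}{}^tR' > 0$, so $\psi|U^*_W \in J_{k,\mathcal{M}}^{(n')\,cusp}$.

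The main obstacle is the bookkeeping in the transformation law, specifically keeping track of the Heisenberg factor of automorphy $J_{k,\mathcal{M}}$ and the commutator $\kappa$-terms generated when composing and conjugating Heisenberg elements, and confirming rigorously that the representative set $(\Z/N\Z)^{(n',1)} \times (\Z/N\Z)^{(n',1)}$ is precisely the transversal claimed. The interaction between the non-integral conjugation by $W^{-1}$ and the integral group $\Gamma_{n',2}^J$ is where all the care is required; once the transversal claim and the commutation with $\Gamma_{n'}$ are established, the remaining steps are routine.
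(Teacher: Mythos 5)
Your proof is correct and takes essentially the same route as the paper: the paper's own proof consists only of asserting that the transformation law is a straightforward calculation and that the cusp property follows from expressing the Fourier coefficients of $\psi|U^*_{\smat{N}{0}{0}{1}}$ in terms of those of $\psi$, which is exactly what you carry out in detail. Your transversal argument for the integral Heisenberg group modulo the stabilizer of $\psi|U_{\smat{N^{-1}}{0}{0}{1}}$, the conjugation argument for embedded $M \in \Gamma_{n'}$, and the transfer of the strict inequality $4N_1 - R\left(\mathcal{M}\left[\smat{N}{0}{0}{1}\right]\right)^{-1}{}^tR > 0$ under $R \mapsto R\smat{N^{-1}}{0}{0}{1}$ supply precisely the details the paper leaves implicit.
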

\begin{proof}
  One can check the first statement by a straightforward calculation.
  For the second statement we need to show that $\psi|U^*_{\left(\begin{smallmatrix} N & 0 \\ 0 & 1 \end{smallmatrix} \right) }$
  is a Jacobi cusp form.
  It is shown by the expression of the Fourier coefficients of
  $\psi|U^*_{\left(\begin{smallmatrix} N & 0 \\ 0 & 1 \end{smallmatrix} \right) }$
  by using the Fourier coefficients of $\psi$.
\end{proof}

\begin{lemma}\label{lem:adjoint_U}
  The map $U^*_{\left(\begin{smallmatrix} N & 0 \\ 0 & 1 \end{smallmatrix} \right) }$
  is the adjoint map of
  $U_{\left(\begin{smallmatrix} N & 0 \\ 0 & 1 \end{smallmatrix} \right)}$
   with respect to the Petersson inner product.
  It means that 
 we have
 \begin{eqnarray*}
   \langle \psi_1|
   U_{\left(\begin{smallmatrix} N & 0 \\ 0 & 1 \end{smallmatrix} \right)}, \psi_2 \rangle
   &=&
   \langle \psi_1, 
   \psi_2|U^*_{\left(\begin{smallmatrix} N & 0 \\ 0 & 1 \end{smallmatrix} \right) } \rangle
 \end{eqnarray*}
 for any $\psi_1 \in 
 J_{k,\mathcal{M}}^{(n')\, cusp}$ and
 any $\psi_2 \in 
 J_{k,\mathcal{M}\left[\left(\begin{smallmatrix} N & 0 \\ 0 & 1 \end{smallmatrix}
  \right) \right]}^{(n')\, cusp}$.
\end{lemma}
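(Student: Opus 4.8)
The plan is to prove the identity by unfolding the Petersson integral for $\langle\psi_1|U_A,\psi_2\rangle$, where I abbreviate $A:=\smat{N}{0}{0}{1}$, then performing a linear change of variables in the elliptic variable $z$ that moves the action of $U_A$ off $\psi_1$ and onto $\psi_2$, and finally re-folding the resulting integral over a fundamental domain for a subgroup of $\Gamma_{n',2}^J$ back to the standard fundamental domain $\mathcal F_{n',2}$. The average over Heisenberg translates appearing in the definition of $U^*_A$ is precisely the coset sum produced by this re-folding, and the constant $N^{-2n'}$ will match the Jacobian of the change of variables.

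Concretely, I would first write $\langle\psi_1|U_A,\psi_2\rangle$ as the integral over $\mathcal F_{n',2}=\Gamma_{n',2}^J\backslash(\H_{n'}\times\C^{(n',2)})$ of $\psi_1(\tau,zA)\,\overline{\psi_2(\tau,z)}\,e^{-4\pi\mathrm{Tr}(\M[A]\,v^{-1}[y])}\det(v)^{k-n'-3}$ against $du\,dv\,dx\,dy$; both $\psi_1|U_A$ and $\psi_2$ have index $\M[A]$ by Lemma~\ref{lem:iota_u}, so this is the correct integrand. Substituting $z=z'A^{-1}$ I must verify three points: the Jacobian of $z\mapsto zA^{-1}$ contributes exactly $N^{-2n'}$, since $A^{-1}$ scales the first column of $z\in\C^{(n',2)}$ by $N^{-1}$ and hence each of $dx,dy$ by $N^{-n'}$; the Gaussian factor is unchanged, because $\mathrm{Tr}(\M[A]\,v^{-1}[y])=\mathrm{Tr}(\M\,v^{-1}[y'])$ by the symmetry of $A$ and cyclicity of the trace; and, since $\M[A][A^{-1}]=\M$, the term $\overline{\psi_2(\tau,z'A^{-1})}=\overline{(\psi_2|U_{A^{-1}})(\tau,z')}$ now carries index $\M$, matching $\psi_1$. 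Thus $\langle\psi_1|U_A,\psi_2\rangle=N^{-2n'}\int_{\mathcal D}\psi_1\,\overline{\psi_2|U_{A^{-1}}}\,e^{-4\pi\mathrm{Tr}(\M\,v^{-1}[y'])}\det(v)^{k-n'-3}$, where $\mathcal D$ is the image of $\mathcal F_{n',2}$ under $z\mapsto zA$.

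The heart of the argument is the re-folding of $\mathcal D$. Let $g_U:=(1_{2n'}\times\smat{A}{0}{0}{A^{-1}},[(0,0),0_2])\in G_{n',2}^J(\Qq)$ be the rational element implementing $z\mapsto zA$, so that $\mathcal D=g_U\mathcal F_{n',2}$ and $U_{A^{-1}}$ corresponds, up to a scalar, to the slash by $g_U^{-1}$. A direct computation with the Jacobi group law shows that conjugation $\delta\mapsto g_U\delta g_U^{-1}$ multiplies the first column of the Heisenberg lattice by $N$, so $\Gamma_0:=g_U\Gamma_{n',2}^Jg_U^{-1}$ is a subgroup of $\Gamma_{n',2}^J$ of index $N^{2n'}$, and $\psi_2|U_{A^{-1}}$ (hence the whole integrand) is $\Gamma_0$-invariant, with $\mathcal D$ a fundamental domain for $\Gamma_0$. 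I would then check that the translates $[((\lambda_1,0),(\mu_1,0)),0_2]$ with $\lambda_1,\mu_1\in(\Z/N\Z)^{(n',1)}$ form a complete set of representatives for $\Gamma_0\backslash\Gamma_{n',2}^J$. Using $\psi_1|_{k,\M}\gamma=\psi_1$ together with the defining property $W_\M(\gamma\cdot)\,|\mathrm{Jac}|=|J_{k,\M}(\gamma,\cdot)|^{-2}\,W_\M$ of the Petersson density $W_\M(\tau,z):=e^{-4\pi\mathrm{Tr}(\M\,v^{-1}[y])}\det(v)^{k-n'-3}$, each integral $\int_{\mathcal F_{n',2}}\psi_1\,\overline{(\psi_2|U_{A^{-1}})|_{k,\M}\gamma}\,W_\M$ equals $\int_{\gamma\mathcal F_{n',2}}\psi_1\,\overline{\psi_2|U_{A^{-1}}}\,W_\M$; summing over the $N^{2n'}$ representatives reassembles $\int_{\mathcal D}$, and the prefactor $N^{-2n'}$ turns the sum into exactly $\langle\psi_1,\psi_2|U^*_A\rangle$ by the definition of $U^*_A$.

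The main obstacle I anticipate is the coset bookkeeping in the previous paragraph: verifying that $\Gamma_0=g_U\Gamma_{n',2}^Jg_U^{-1}$ has index exactly $N^{2n'}$ in $\Gamma_{n',2}^J$ and that the $N^{2n'}$ pure Heisenberg translates are a nonredundant, exhaustive set of representatives for $\Gamma_0\backslash\Gamma_{n',2}^J$ (in particular that the symplectic part, which acts $\Z$-linearly on the first columns, does not spoil this count). By contrast the analytic ingredients are routine: convergence of all integrals and the legitimacy of interchanging the finite sum with the integral follow from the cuspidality of $\psi_1,\psi_2$ via the boundedness criterion of Lemma~\ref{lem:cusp_criterium}, and the Jacobian and trace computations of the second paragraph are straightforward. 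The entire content of the statement thus reduces to the matching of the coset index $N^{2n'}$ with the normalizing constant $N^{-2n'}$ built into $U^*_A$.
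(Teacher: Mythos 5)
Your proposal is correct and is essentially the paper's own argument: the paper unfolds $\langle \psi_1|U_{\smat{N}{0}{0}{1}},\psi_2\rangle$ by the Heisenberg substitution $z_1 \mapsto z_1 + \tfrac1N(\tau\lambda_1+\mu_1)$ followed by the dilation $z_1 \mapsto \tfrac1N z_1$, lands on the dilated domain $\mathcal{F}_{n',2}(N)$ (your $\mathcal{D}$) with the Jacobian factor $N^{-2n'}$, and matches the $N^{2n'}$ translates in the definition of $U^*_{\smat{N}{0}{0}{1}}$ against $\mathrm{vol}(\mathcal{F}_{n',2}(N)) = N^{2n'}\,\mathrm{vol}(\mathcal{F}_{n',2})$, which is precisely your coset count $[\Gamma^J_{n',2}:\Gamma_0]=N^{2n'}$ in change-of-variables form. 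One small correction to your flagged bookkeeping: conjugation by $g_U$ also shrinks the central lattice (it sends $\kappa$ to $\smat{N^2\kappa_{11}}{N\kappa_{12}}{N\kappa_{12}}{\kappa_{22}}$), so the literal group index is $N^{2n'+3}$ and your translates represent $\Gamma_0\backslash\Gamma^J_{n',2}$ only modulo the center --- which is harmless, since integral central elements act trivially both on $\H_{n'}\times\C^{(n',2)}$ and, via $e(-\mathrm{tr}(\M\kappa))=1$, on forms of index $\M$.
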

\begin{proof}
We fix a fundamental domain $\mathcal{F}_{n',2}$ of $\Gamma_{n',2}^J \backslash (\H_{n'} \times \C^{(n',2)})$.
And we put
\[
\mathcal{F}_{n',2}(N) = \left\{ (\tau,z\left(\begin{smallmatrix} N & 0 \\ 0 & 1 \end{smallmatrix} \right)) \in \H_{n'} \times \C^{(n',2)} \, | \, (\tau,z) \in \mathcal{F}_{n',2} \right\}.
\]
We have $\mbox{vol}(\mathcal{F}_{n',2}(N)) = N^{2n'}\mbox{vol}(\mathcal{F}_{n',2})$.
Here $\mbox{vol}(*)$ is the volume with the measure $\det(v)^{-n'-3} du\, dv\, dx\, dy$.
Let $X = \left[((\lambda_1,0), (\mu_1,0)),0_2\right] \in \Z^{(n',2)} \times \Z^{(n',2)} \times \Z^{(2,2)}$.
We write $z = x + \sqrt{-1} y =  (z_1,z_2) \in \C^{(n',2)}$,
$z_i = x_i + \sqrt{-1} y_i$ $(i=1,2)$.
Then, by the substitution $ z_1 \rightarrow z_1 + \frac{1}{N} \tau \lambda_1 + \frac{1}{N} \mu_1$,
we have
\begin{eqnarray*}
    &&
    \langle \psi_1|U_{\left(\begin{smallmatrix} N & 0 \\ 0 & 1 \end{smallmatrix} \right)}, \psi_2 \rangle 
    \\
    &=&
    \int_{\mathcal{F}_{n',2}} \psi_1(\tau,(N z_1, z_2))
     \overline{\psi_2(\tau,z)} \det(v)^{k-n'-3}
     e(\frac{-4 \pi}{2\pi i}  \M\left[\left(\begin{smallmatrix} N & 0 \\ 0 & 1 \end{smallmatrix} \right) \right] v^{-1}[y]) \, du\, dv\, dx\, dy \\
    &=&
    \int_{\mathcal{F}_{n',2}} \psi_1(\tau,(Nz_1+\tau \lambda_1 +  \mu_1, z_2))
     \overline{\psi_2(\tau,(z_1+ \frac{1}{N}\tau \lambda_1 + \frac{1}{N}\mu_1,z_2))} \det(v)^{k-n'-3} \\
     &&
     \times e(\frac{-4 \pi}{2\pi i} 
     \M\left[\left(\begin{smallmatrix} N & 0 \\ 0 & 1 \end{smallmatrix} \right) \right] 
     v^{-1}[y+ (\frac{1}{N}v \lambda_1,0)]) \, du\, dv\, dx\, dy,
\end{eqnarray*}
 and by the substitution $z_1 \rightarrow \frac{1}{N}z_1$,
\begin{eqnarray*}
    &=&
    \int_{\mathcal{F}_{n',2}(N)}
    N^{-2n'} \psi_1(\tau,(z_1+\tau\lambda_1+ \mu_1,z_2)) \overline{\psi_2(\tau,(N^{-1}(z_1+\tau\lambda_1
     + \mu_1),z_2))} \det(v)^{k-n'-3} \\
    &&
    \times e(\frac{-4 \pi}{2\pi i}  \M v^{-1}[y+ (v \lambda_1,0)]) \, du\, dv\, dx\, dy \\
    &=&
    \int_{\mathcal{F}_{n',2}(N)}
    N^{-2n'} \psi_1(\tau,z) \overline{(\psi_2(*, * \left(\begin{smallmatrix} N^{-1} & 0 \\ 0 & 1 \end{smallmatrix}\right) )
    |X )(\tau,z)} \det(v)^{k-n'-3} e(\frac{-4 \pi}{2\pi i}  \M v^{-1}[y]) \, du\, dv\, dx\, dy.
\end{eqnarray*}
Thus we have
\begin{eqnarray*}
    &&
    \langle \psi_1|U_{\left(\begin{smallmatrix} N & 0 \\ 0 & 1 \end{smallmatrix} \right)}, \psi_2 \rangle  \\
    &=&
    N^{-2n'}\sum_{\lambda_1,\mu_1 \in \Z^{(n',1)}}
        \int_{\mathcal{F}_{n',2}(N)}
    N^{-2n'} \psi_1(\tau,z) \overline{(\psi_2(*, * \left(\begin{smallmatrix} N^{-1} & 0 \\ 0 & 1 \end{smallmatrix}\right) )
    |X )(\tau,z)} \\
    &&
    \times \det(v)^{k-n'-3} e(\frac{-4 \pi}{2\pi i}  \M v^{-1}[y]) \, du\, dv\, dx\, dy.
    \\
    &=&
    N^{-2n'} \int_{\mathcal{F}_{n',2}(N)}
     \psi_1(\tau,z) \overline{(\psi_2|U_{\left(\begin{smallmatrix} N & 0 \\ 0 & 1 \end{smallmatrix} \right)}^* )(\tau,z)} \det(v)^{k - n' - 3} e(\frac{-4 \pi}{2\pi i}  \M v^{-1}[y]) \, du\, dv\, dx\, dy  \\
    &=&
    \int_{\mathcal{F}_{n',2}}
     \psi_1(\tau,z) \overline{(\psi_2|U_{\left(\begin{smallmatrix} N & 0 \\ 0 & 1 \end{smallmatrix} \right)}^* )(\tau,z)} \det(v)^{k - n' - 3} e(\frac{-4 \pi}{2\pi i}  \M v^{-1}[y]) \, du\, dv\, dx\, dy \\
     &=&
         \langle \psi_1 , \psi_2 | U^*_{\left(\begin{smallmatrix} N & 0 \\ 0 & 1 \end{smallmatrix} \right)}\rangle .
\end{eqnarray*}
Thus we conclude this lemma.
\end{proof}

\begin{lemma}\label{lem:U*_UU*}
 For any natural numbers $N$ and $M$, we have
 \begin{itemize}
 \item[(1)]
 \begin{eqnarray*}
   \psi_1|U_{\smat{N}{0}{0}{1} }|U^*_{\smat{N}{0}{0}{1}}
   &=&
   \psi_1 ,
 \end{eqnarray*} 
 \item[(2)]
 \begin{eqnarray*}
   \psi_2|U^*_{\smat{N}{0}{0}{1} }
   &=&
   \psi_2|U_{\smat{M}{0}{0}{1} }|U^*_{\smat{NM}{0}{0}{1}} 
 \end{eqnarray*}
 \end{itemize}
 for any prime $p$ and for any $\psi_1 \in J_{k,\M}^{(n')}$ and any
 $\psi_2 \in J_{k,\M\left[\smat{N}{0}{0}{1} \right]}^{(n')}$.
 Moreover, if a natural number $d$ is coprime to $N$, then
 \begin{itemize}
  \item[(3)]
  \begin{eqnarray*}
   \psi_2|U_{\smat{d}{0}{0}{1}} | U^*_{\smat{N}{0}{0}{1} }
   &=&
   \psi_2}|U^*_{\smat{N}{0}{0}{1}} |U_{\smat{d}{0}{0}{1} , 
  \end{eqnarray*}
  \item[(4)]
  \begin{eqnarray*}
   \psi_3|U^*_{\smat{d}{0}{0}{1}} | U^*_{\smat{N}{0}{0}{1} }
   &=&
   \psi_3}|U^*_{\smat{N}{0}{0}{1}} |U^*_{\smat{d}{0}{0}{1} 
  \end{eqnarray*}  
  for any  $\psi_2 \in J_{k,\M\left[\smat{N}{0}{0}{1} \right]}^{(n')}$ and any
  $\psi_3 \in J_{k,\M\left[\smat{Nd}{0}{0}{1} \right]}^{(n')}$.
 \end{itemize}
\end{lemma}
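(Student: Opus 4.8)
The plan is to prove all four identities directly from the definition of $U^*_{\smat{N}{0}{0}{1}}$, reducing each to elementary counting of Heisenberg cosets; since the statements concern arbitrary Jacobi forms rather than only cusp forms, I avoid the Petersson product (Lemma~\ref{lem:adjoint_U}) and argue purely formally. Throughout write $h(\lambda,\mu) := [((\lambda,0),(\mu,0)),0_2]$ for $\lambda,\mu \in \R^{(n',1)}$ and $\M_N := \M\left[\smat{N}{0}{0}{1}\right]$. I will use three tools. First, the substitution operators compose by $U_{\smat{a}{0}{0}{1}}U_{\smat{a'}{0}{0}{1}} = U_{\smat{aa'}{0}{0}{1}}$ for $a,a'\in\Qq_{>0}$, so that $U_{\smat{N}{0}{0}{1}}U_{\smat{N^{-1}}{0}{0}{1}} = \mathrm{id}$, $U_{\smat{M}{0}{0}{1}}U_{\smat{(NM)^{-1}}{0}{0}{1}} = U_{\smat{N^{-1}}{0}{0}{1}}$, and so on. Second, a conjugation relation: for any $\chi$ of index $\mathcal{A}$, any $a\in\Qq_{>0}$ and any $\lambda,\mu$, $\chi|_{k,\mathcal{A}}h(\lambda,\mu)\,|U_{\smat{a}{0}{0}{1}} = \chi|U_{\smat{a}{0}{0}{1}}\,|_{k,\mathcal{A}[\smat{a}{0}{0}{1}]}h(a^{-1}\lambda,a^{-1}\mu)$, obtained by chasing the action on $(\tau,z)$ through both sides. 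Third, the defining invariance $\psi|_{k,\M_N}h(\alpha,\beta) = \psi$ for integral $\alpha,\beta$ when $\psi\in J_{k,\M_N}^{(n')}$.

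For (1) I substitute $\psi_1|U_{\smat{N}{0}{0}{1}}$ into the definition of $U^*_{\smat{N}{0}{0}{1}}$; the composition law collapses the inner $U_{\smat{N}{0}{0}{1}}U_{\smat{N^{-1}}{0}{0}{1}}$ to the identity, leaving $N^{-2n'}\sum_{\lambda,\mu\in(\Z/N\Z)^{(n',1)}}\psi_1|_{k,\M}h(\lambda,\mu)$; by the invariance of $\psi_1$ each of the $N^{2n'}$ summands equals $\psi_1$, proving the claim. For (2), expanding the right-hand side and using $U_{\smat{M}{0}{0}{1}}U_{\smat{(NM)^{-1}}{0}{0}{1}}=U_{\smat{N^{-1}}{0}{0}{1}}$ reduces the identity to comparing $(NM)^{-2n'}\sum_{(\Z/NM\Z)^{(n',1)}}$ with $N^{-2n'}\sum_{(\Z/N\Z)^{(n',1)}}$ of the common summand $\psi_2|U_{\smat{N^{-1}}{0}{0}{1}}|_{k,\M}h(\lambda,\mu)$. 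The conjugation relation together with the invariance of $\psi_2$ at index $\M_N$ shows that this summand depends on $(\lambda,\mu)$ only modulo $N$; hence the larger sum equals $M^{2n'}$ times the smaller one, and the normalizations $(NM)^{-2n'}M^{2n'}=N^{-2n'}$ match.

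For (3) and (4) the same mechanism applies, with coprimality entering through the Chinese Remainder Theorem. In (3) I expand both composites; commuting $U_{\smat{d}{0}{0}{1}}$ past the Heisenberg slashes by the conjugation relation turns both into averages of the single function $\psi_2|U_{\smat{d/N}{0}{0}{1}}|_{k,\M[\smat{d}{0}{0}{1}]}h(\,\cdot\,,\,\cdot\,)$, but evaluated over the two index families $\{(\lambda,\mu)\}$ and $\{(d^{-1}\lambda,d^{-1}\mu)\}$ with $\lambda,\mu$ running over $(\Z/N\Z)^{(n',1)}$. This summand is invariant under Heisenberg shifts by $\tfrac Nd\Z^{(n',1)}$, and because $(d,N)=1$ both families represent exactly the same $N^{2n'}$ cosets of that lattice inside $\tfrac1d\Z^{(n',1)}$, so the two averages coincide. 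In (4) I expand $\psi_3|U^*_{\smat{d}{0}{0}{1}}|U^*_{\smat{N}{0}{0}{1}}$, collapse the two inner substitutions to $U_{\smat{(Nd)^{-1}}{0}{0}{1}}$, and merge the two Heisenberg slashes; the double average over $(\Z/d\Z)^{(n',1)}\times(\Z/N\Z)^{(n',1)}$ becomes $(Nd)^{-2n'}\sum_{\xi,\eta\in(\Z/Nd\Z)^{(n',1)}}\psi_3|U_{\smat{(Nd)^{-1}}{0}{0}{1}}|_{k,\M}h(\xi,\eta)$ via the bijection $N\alpha+\lambda\leftrightarrow\xi\bmod Nd$ furnished by $(d,N)=1$. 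This final expression is symmetric in $N$ and $d$, hence also equals $\psi_3|U^*_{\smat{N}{0}{0}{1}}|U^*_{\smat{d}{0}{0}{1}}$.

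The main obstacle is bookkeeping rather than strategy: establishing the conjugation relation with the two factors of automorphy matched precisely (here the replacement of $\mathcal{A}$ by $\mathcal{A}[\smat{a}{0}{0}{1}]$ must exactly offset the rescaling $\lambda\mapsto a^{-1}\lambda$), and verifying that the Heisenberg cocycle terms produced when regrouping products $h(\lambda,\mu)h(\lambda',\mu')$ cancel. The latter amounts to checking that the phases $e(-\M[\,\cdots\,]\kappa'')$ are trivial, which holds because the diagonal entries of $\M$ are integral while the cross terms carry compensating factors of $N$ (respectively $Nd$). I would isolate these two computations as short preliminary steps so that the coset-counting in (2)--(4) can be stated cleanly.
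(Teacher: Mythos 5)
Your proposal is correct and takes essentially the same route as the paper: the paper also argues purely from the definitions (never invoking the Petersson product), declares (1) obvious, proves (2) exactly by your observation that the summand depends on $(\lambda_1,\mu_1)$ only modulo $N$ — its one displayed identity is
$\psi|U_{\smat{M}{0}{0}{1}}|U_{\smat{(NM)^{-1}}{0}{0}{1}}|_{k,\M}[(\lambda_1+N\lambda'_1,0),(\mu_1+N\mu'_1,0),0_2]
=\psi|U_{\smat{N^{-1}}{0}{0}{1}}|_{k,\M}[(\lambda_1,0),(\mu_1,0),0_2]$ —
and dismisses (3)--(4) as straightforward calculations. Your coset-counting via the lattice $\tfrac{N}{d}\Z^{(n',1)}$ and the explicit check that the Heisenberg cocycle phases are trivial (integral diagonal of $\M$ against cross terms divisible by $N$) simply supply the details the paper leaves implicit.
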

\begin{proof}
 The identity (1) is obvious because of the definitions.
 The identity (2) follows from the definitions of $U_{\smat{N}{0}{0}{1} }$ and $U^*_{\smat{N}{0}{0}{1}}$,
 since
 \begin{eqnarray*}
   &&
   \psi|U_{\smat{M}{0}{0}{1} }|U_{\smat{(NM)^{-1}}{0}{0}{1}}
   |_{k,\M}[(\lambda_1 + N \lambda'_1,0), (\mu_1 + N \mu'_1,0),0_2] \\
   &=&
   \psi|U_{\smat{N^{-1}}{0}{0}{1}}
   |_{k,\M}[(\lambda_1,0), (\mu_1,0),0_2] 
    \end{eqnarray*}
    for any $\lambda_1$, $\lambda'_1$, $\mu_1$, $\mu'_1$ $\in \Z^{(n',1)}$.
    The identities (3) and (4) also follow from straightforward calculations.
\end{proof}

\subsection{Adjoint map $U^*$ for Jacobi forms of half-integral weight}\label{ss:adjoint_map_of_u_half}

\begin{df}\label{df:U*_N}
By combining Proposition~\ref{prop:matrix_integer_isom}
and Lemma~\ref{lem:adjoint_U} we define
the adjoint map $U_N^*$ of $U_N$ with respect to the Petersson inner product.
For $\phi \in J_{k-\frac12, mN^2}^{+(n')}$ we define
\begin{eqnarray*}
  \phi|U_N^* &:=& \iota_{\M}((\iota_{\M\left[ \smat{N}{0}{0}{1} \right]}^{-1} (\phi))|U_{\smat{N}{0}{0}{1} }^*).
\end{eqnarray*}
Then, due to Proposition~\ref{prop:matrix_integer_isom}
and Lemma~\ref{lem:adjoint_U},  we have $\phi|U_N^* \in J_{k-\frac12, m}^{+(n')}$.
And if $\phi \in J_{k-\frac12, mN^2}^{+(n')\, cusp}$, then $\phi|U_N^* \in J_{k-\frac12, m}^{+(n')\, cusp}$.
\end{df}

\begin{lemma}\label{lem:u_adj}
    The map $U^*_N$
  is the adjoint map of
  $U_N$
   with respect to the Petersson inner product.
  It means that 
 we have
 \begin{eqnarray*}
   \langle \phi_1|
   U_N, \phi_2 \rangle
   &=&
   \langle \phi_1, 
   \phi_2|U^*_N \rangle
 \end{eqnarray*}
 for any $\phi_1 \in 
 J_{k-\frac12,m}^{(n')\, cusp}$ and any
 $\phi_2 \in 
 J_{k-\frac12,mN^2}^{(n')\, cusp}$.
\end{lemma}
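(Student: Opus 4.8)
The plan is to transport the whole identity through the linear isomorphism $\iota_\M$ to the integral-weight setting, where the corresponding adjoint relation is already available as Lemma~\ref{lem:adjoint_U}. Fix a matrix $\M = \smat{*}{*}{*}{1} \in L_2^+$ with $\det(2\M) = m$, and set $\M' := \M[\smat{N}{0}{0}{1}]$, so that $\det(2\M') = mN^2$ and $\iota_{\M'}$ is an isomorphism onto $J_{k-\frac12,mN^2}^{+(n')}$. Given $\phi_1 \in J_{k-\frac12,m}^{+(n')\,cusp}$ and $\phi_2 \in J_{k-\frac12,mN^2}^{+(n')\,cusp}$, I would put $\psi_1 := \iota_\M^{-1}(\phi_1) \in J_{k,\M}^{(n')\,cusp}$ and $\psi_2 := \iota_{\M'}^{-1}(\phi_2) \in J_{k,\M'}^{(n')\,cusp}$, so that the statement reduces to a comparison of the two occurrences of the universal constant $2^{2n'(k-1)}$ coming from Lemma~\ref{lem:iota_petersson}.

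First I would rewrite each side as an integral-weight inner product. For the left-hand side, Lemma~\ref{lem:iota_u} (taken with $a=N$, $b=0$) gives $\phi_1|U_N = \iota_\M(\psi_1)|U_N = \iota_{\M'}(\psi_1|U_{\smat{N}{0}{0}{1}})$, so that $\phi_1|U_N$ and $\phi_2 = \iota_{\M'}(\psi_2)$ both lie in the image of $\iota_{\M'}$; applying Lemma~\ref{lem:iota_petersson} to this pair yields $\langle \phi_1|U_N, \phi_2\rangle = 2^{-2n'(k-1)}\,\langle \psi_1|U_{\smat{N}{0}{0}{1}}, \psi_2\rangle$. For the right-hand side, the very definition of $U_N^*$ in Definition~\ref{df:U*_N} gives $\iota_\M^{-1}(\phi_2|U_N^*) = \psi_2|U^*_{\smat{N}{0}{0}{1}}$, and a second application of Lemma~\ref{lem:iota_petersson}, now to the pair $\psi_1,\ \psi_2|U^*_{\smat{N}{0}{0}{1}} \in J_{k,\M}^{(n')\,cusp}$, yields $\langle \phi_1, \phi_2|U_N^*\rangle = 2^{-2n'(k-1)}\,\langle \psi_1, \psi_2|U^*_{\smat{N}{0}{0}{1}}\rangle$. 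Since the two integral-weight inner products agree by Lemma~\ref{lem:adjoint_U}, namely $\langle \psi_1|U_{\smat{N}{0}{0}{1}}, \psi_2\rangle = \langle \psi_1, \psi_2|U^*_{\smat{N}{0}{0}{1}}\rangle$, and since the prefactor $2^{-2n'(k-1)}$ is identical on both sides, the desired equality $\langle \phi_1|U_N, \phi_2\rangle = \langle \phi_1, \phi_2|U_N^*\rangle$ follows at once.

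The argument is essentially formal once this dictionary is in place, so the only step needing genuine care — and the point I expect to be the main obstacle — is the compatibility of indices and weights making each invocation of $\iota$ and of Lemma~\ref{lem:iota_petersson} legitimate: one must check that $\det(2\M') = mN^2$ so that $\phi_2$ is truly in the image of $\iota_{\M'}$, that $\phi_1|U_N$ again lands in the plus-space $J_{k-\frac12,mN^2}^{+(n')\,cusp}$ (which is exactly the content of Lemma~\ref{lem:iota_u}), and above all that the constant in Lemma~\ref{lem:iota_petersson} depends only on $n'$ and $k$ and not on the index $\M$, so that the two factors $2^{-2n'(k-1)}$ are genuinely equal and drop out. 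No new estimate or computation is required beyond these verifications.
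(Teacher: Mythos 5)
Your proposal is correct and follows exactly the paper's own proof: the paper likewise transfers both inner products to the integral-weight setting via $\iota_{\M}$ and $\iota_{\M[\smat{N}{0}{0}{1}]}$, invoking Lemma~\ref{lem:iota_petersson}, Lemma~\ref{lem:iota_u}, Definition~\ref{df:U*_N} and Lemma~\ref{lem:adjoint_U}, with the index-independent constant $2^{-2n'(k-1)}$ appearing on both sides and cancelling. Your added verifications (that $\det(2\M[\smat{N}{0}{0}{1}])=mN^2$ and that the constant depends only on $n'$ and $k$) are exactly the implicit compatibilities the paper relies on.
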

\begin{proof}
Due to Lemma~\ref{lem:iota_petersson}, Lemma~\ref{lem:iota_u} and Lemma~\ref{lem:adjoint_U},
we have
\begin{eqnarray*}
  \langle \phi_1|
   U_N, \phi_2 \rangle &=&
   2^{-2n'(k-1)}
   \langle \iota_{\M[\smat{N}{0}{0}{1}]}^{-1}(\phi_1|
   U_N), \iota_{\M[\smat{N}{0}{0}{1}]}^{-1}(\phi_2) \rangle \\
   &=&
   2^{-2n'(k-1)}
   \langle \iota_{\M}^{-1}(\phi_1)|
   U_{[\smat{N}{0}{0}{1}]}, \iota_{\M[\smat{N}{0}{0}{1}]}^{-1}(\phi_2) \rangle \\
      &=&
   2^{-2n'(k-1)}
   \langle \iota_{\M}^{-1}(\phi_1),
   \iota_{\M[\smat{N}{0}{0}{1}]}^{-1}(\phi_2)|U^*_{[\smat{N}{0}{0}{1}]} \rangle \\
   &=&
  \langle \phi_1, \phi_2|
   U^*_N \rangle .
\end{eqnarray*}
\end{proof}

\begin{lemma}\label{lem:u_adj_2}
  For any natural numbers $N$ and $M$,
 we have
 \begin{itemize}
 \item[(1)]
  \begin{eqnarray*}
    \phi_1|U_N | U^*_{N}
    &=&
    \phi_1 ,
  \end{eqnarray*} 
 \item[(2)]
  \begin{eqnarray*}
    \phi_2|U^*_N &=&
    \phi_2|U_M | U^*_{NM}
  \end{eqnarray*}
  \end{itemize}
  for any $\phi_1 \in J_{k-\frac12, m}^{+(n')}$ and any $\phi_2 \in J_{k-\frac12, mN^2}^{+(n')}$.
   Moreover, if a natural number $d$ is coprime to $N$, then
   \begin{itemize}
     \item[(3)]
  \begin{eqnarray*}
    \phi_2 | U_d | U^*_N &=&
    \phi_2 | U^*_N | U_d ,
  \end{eqnarray*}     
     \item[(4)]
  \begin{eqnarray*}
    \phi_3 | U^*_d | U^*_N &=&
    \phi_3 | U^*_N | U^*_d 
  \end{eqnarray*}
   \end{itemize}
   for any $\phi_2 \in J_{k-\frac12, mN^2}^{+(n')}$ and any $\phi_3 \in J_{k-\frac12, mN^2M^2}^{+(n')}$.
\end{lemma}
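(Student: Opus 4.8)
The plan is to derive all four identities from their integral-weight counterparts in Lemma~\ref{lem:U*_UU*} by transporting them through the isomorphism $\iota_\M$. I fix once and for all a matrix $\M \in L_2^+$ with $\det(2\M) = m$, and for every index of the form $mL^2$ I use the matrix index $\M[\smat{L}{0}{0}{1}]$; since $\det(2\M[\smat{L}{0}{0}{1}]) = L^2\det(2\M) = mL^2$, the map $\iota_{\M[\smat{L}{0}{0}{1}]}$ is precisely a bijection onto $J_{k-\frac12,mL^2}^{+(n')}$ by Proposition~\ref{prop:matrix_integer_isom}. The elementary but indispensable bookkeeping identity is $\M[\smat{a}{0}{0}{1}][\smat{b}{0}{0}{1}] = \M[\smat{ab}{0}{0}{1}]$, which guarantees that successive index-shift maps produce matching matrix indices. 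Throughout, I rewrite each half-integral $U_d$ via Lemma~\ref{lem:iota_u} (with the free parameter $b=0$) as $\iota_{\M[\cdots]}$ applied to the integral-weight map $U_{\smat{d}{0}{0}{1}}$, and each $U_N^*$ via Definition~\ref{df:U*_N} as $\iota_{\M[\cdots]}$ applied to $U^*_{\smat{N}{0}{0}{1}}$.

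To illustrate, for (1) I set $\psi_1 = \iota_\M^{-1}(\phi_1)$, so that $\phi_1|U_N = \iota_{\M[\smat{N}{0}{0}{1}]}(\psi_1|U_{\smat{N}{0}{0}{1}})$ by Lemma~\ref{lem:iota_u}; Definition~\ref{df:U*_N} then gives $\phi_1|U_N|U_N^* = \iota_\M((\psi_1|U_{\smat{N}{0}{0}{1}})|U^*_{\smat{N}{0}{0}{1}})$, and Lemma~\ref{lem:U*_UU*}(1) collapses the inner form to $\psi_1$, so the right-hand side is $\iota_\M(\psi_1) = \phi_1$. Identity (2) follows the same pattern with $\phi_2 = \iota_{\M[\smat{N}{0}{0}{1}]}(\psi_2)$: both sides reduce to $\iota_\M$ applied to the two sides of Lemma~\ref{lem:U*_UU*}(2). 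For (3) and (4) the coprimality hypothesis $(d,N)=1$ is passed along verbatim, and both sides reduce, again through Lemma~\ref{lem:iota_u} and Definition~\ref{df:U*_N}, to $\iota_{\M[\smat{d}{0}{0}{1}]}$ applied to the two sides of Lemma~\ref{lem:U*_UU*}(3) and Lemma~\ref{lem:U*_UU*}(4) respectively.

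The step I expect to be the main obstacle is purely one of bookkeeping: in each composition I must confirm that the intermediate Jacobi form lies exactly in the space on which the next map is defined, and that each invocation of $U_N^*$ uses the base matrix dictated by Definition~\ref{df:U*_N}. For example, in (3) the outer $U_N^*$ is applied to $\phi_2|U_d \in J_{k-\frac12,md^2N^2}^{+(n')}$, and must therefore be computed with base matrix $\M[\smat{d}{0}{0}{1}]$ rather than $\M$, so that the associated integral-weight map is $U^*_{\smat{N}{0}{0}{1}}$ acting on $J_{k,\M[\smat{dN}{0}{0}{1}]}^{(n')}$, matching the left-hand side of Lemma~\ref{lem:U*_UU*}(3). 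Once these index matchings are verified using the multiplicativity $\M[\smat{a}{0}{0}{1}][\smat{b}{0}{0}{1}] = \M[\smat{ab}{0}{0}{1}]$, all four identities are immediate consequences of Lemma~\ref{lem:U*_UU*}, requiring no further computation.
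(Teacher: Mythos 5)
Your proposal is correct and follows exactly the paper's own proof, which is the one-line observation that the lemma follows from Definition~\ref{df:U*_N}, Lemma~\ref{lem:iota_u} and Lemma~\ref{lem:U*_UU*}; you have simply made explicit the index bookkeeping (transporting through $\iota_{\M[\smat{L}{0}{0}{1}]}$ with $b=0$, and using the correct base matrix $\M[\smat{d}{0}{0}{1}]$ for the outer $U_N^*$ in (3)) that the paper leaves implicit. No gaps.
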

\begin{proof}
This lemma follows from Definition~\ref{df:U*_N},
Lemma~\ref{lem:iota_u} and Lemma~\ref{lem:U*_UU*}.
\end{proof}

\begin{lemma}
  If $(N,2)  =1 $, then the map $U_N^*$ is given by
\begin{eqnarray*}
  \phi|U_N^*
  &=&
  (2N)^{-2n'} \sum_{\lambda \in (\Z/ N \Z)^{n'}} \sum_{\mu \in (\Z/ 4 N \Z)^{n'}}
   \left(\phi|U_{\frac{1}{N}}\right)|_{m} \left[ (\lambda, \frac14 \mu) , 0 \right]
\end{eqnarray*}
  for any $\phi \in J_{k-\frac12, mN^2}^{+(n')}$.
\end{lemma}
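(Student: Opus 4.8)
The plan is to start from Definition~\ref{df:U*_N}, which already expresses $U_N^*$ through the integral-weight adjoint $U^*_{\smat{N}{0}{0}{1}}$ and the isomorphism $\iota_\M$, and then to transport the explicit averaging formula for $U^*_{\smat{N}{0}{0}{1}}$ from \S\ref{s:adjoint_maps} across $\iota_\M$ to the half-integral side. Fix $\M = \smat{*}{*}{*}{1}\in L_2^+$ with $\det(2\M)=m$ and set $\psi := \iota_{\M[\smat{N}{0}{0}{1}]}^{-1}(\phi) \in J_{k,\M[\smat{N}{0}{0}{1}]}^{(n')}$, so that $\phi|U_N^* = \iota_\M(\psi|U^*_{\smat{N}{0}{0}{1}})$. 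Substituting the defining formula
\[
  \psi|U^*_{\smat{N}{0}{0}{1}} = N^{-2n'}\sum_{\lambda_1,\mu_1 \in (\Z/N\Z)^{(n',1)}} \psi|U_{\smat{N^{-1}}{0}{0}{1}}\,|_{k,\M}[((\lambda_1,0),(\mu_1,0)),0_2]
\]
and using the linearity of $\iota_\M$, the problem reduces to evaluating $\iota_\M$ on each Heisenberg translate of the single form $\chi := \psi|U_{\smat{N^{-1}}{0}{0}{1}} \in J_{k,\M}^{(n')}$, where the index returns to $\M$ because $\M[\smat{N}{0}{0}{1}][\smat{N^{-1}}{0}{0}{1}]=\M$.

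First I would identify the ``base'' form. Since the operators $U_{\smat{N^{-1}}{0}{0}{1}}$ and $U_{\smat{N}{0}{0}{1}}$ compose to the identity, one has $\chi|U_{\smat{N}{0}{0}{1}}=\psi$, so Lemma~\ref{lem:iota_u} with $a=N$, $b=0$ gives $\iota_\M(\chi)|U_N = \iota_{\M[\smat{N}{0}{0}{1}]}(\psi) = \phi$, whence $\iota_\M(\chi) = \phi|U_{\frac1N}$, the function $(\tau,z)\mapsto \phi(\tau,z/N)$. It then remains to prove the key intertwining relation: that $\iota_\M$ carries the integral-weight Heisenberg slash $|_{k,\M}[((\lambda_1,0),(\mu_1,0)),0_2]$ into the half-integral-weight slash $|_m[(\lambda,\tfrac14\mu),0]$. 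I would establish this by inserting the theta decomposition that defines $\iota_\M$ (the expression $\iota_\M(\chi)(\tau,z_1)=\sum_{R}f_R(4\tau,4z_1)$) and comparing Fourier expansions: the translation $z_1 \mapsto z_1+\tau\lambda_1+\mu_1$ built into the integral slash, after the rescaling $\tau\mapsto 4\tau$, $z_1\mapsto 4z_1$ present in $\iota_\M$, turns into a quarter-integral translation in the $f_R$-variables, which is the origin of the factor $\tfrac14$ attached to $\mu$; tracking the factors of automorphy from \S\ref{ss:factors_automorphy} on both sides supplies the remaining scalar.

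The main obstacle is the precise bookkeeping in this last step, especially matching the summation ranges and the overall constant. Both the integral-weight average over $(\lambda_1,\mu_1)\bmod N$ weighted by $N^{-2n'}$ and the asserted half-integral average over $\lambda\bmod N$, $\mu\bmod 4N$ weighted by $(2N)^{-2n'}$ are normalized averaging operators of total mass one, so the extra factor $4^{n'}$ of terms on the half-integral side is exactly balanced by the $2^{-2n'}$; the real content is that the $\times 4$ rescaling in $\iota_\M$, together with the level-$4$ and theta-component structure, refines the coarse lattice of integer $z_1$-translations into the finer quarter-integer lattice for the index-$m$ form. I expect pinning down this refinement — determining which representatives $(\lambda,\tfrac14\mu)$ actually occur and with which phases from the half-integral factor of automorphy — to be the delicate part, rather than any analytic estimate, and I would settle it by a direct comparison of Fourier coefficients, invoking the injectivity of $\iota_\M$ (Proposition~\ref{prop:matrix_integer_isom}) to reduce the claim to a coefficient-by-coefficient check. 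Collecting the constants and reindexing then yields $\phi|U_N^* = (2N)^{-2n'}\sum_{\lambda,\mu}(\phi|U_{\frac1N})|_m[(\lambda,\tfrac14\mu),0]$, as asserted.
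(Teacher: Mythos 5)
Your proposal is correct and follows essentially the same route as the paper: the paper's entire proof of this lemma is the single line ``By comparing the Fourier coefficients of the both side, we obtain the identity,'' and your plan---unfolding Definition~\ref{df:U*_N}, transporting the averaging formula for $U^*_{\smat{N}{0}{0}{1}}$ through $\iota_{\M}$, and settling the quarter-shift bookkeeping coefficient-by-coefficient via the injectivity of $\iota_{\M}$---is precisely that Fourier-coefficient comparison with the intermediate structure made explicit. Your identifications of the base form $\iota_{\M}(\psi|U_{\smat{N^{-1}}{0}{0}{1}}) = \phi|U_{\frac{1}{N}}$, of the factor $\tfrac14$ in $\tfrac14\mu$ as arising from the rescaling $(\tau,z_1)\mapsto(4\tau,4z_1)$ inside $\iota_{\M}$, and of the mass balance $(2N)^{-2n'}\cdot 4^{n'}N^{2n'}=1$ all match what the paper's terse verification implicitly requires.
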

\begin{proof}
 By comparing the Fourier coefficients of the both side, we obtain the identity.
\end{proof}

\subsection{Adjoint map $V^*$ for Jacobi forms of integral weight}

Let $X \in \mbox{GSp}_n^+(\Z)$ be a matrix such that the similitude of $X$ is $n(X)=N^2$
with a natural number $N$.
For any $\psi \in J_{k,\mathcal{M}\left[\left(\begin{smallmatrix} N & 0 \\ 0 & 1 \end{smallmatrix} \right) \right]}^{(n')}$ we define the function
\begin{eqnarray*}
 \psi|V^*(X)
 &:=&
 N^{2k-4n'}
 \sum_{\lambda_1,\mu_1 \in (\Z/N^2\Z)^{(n',1)}}
 \sum_{\lambda_2,\mu_2 \in (\Z/N\Z)^{(n',1)}}
 \sum_{M \in \Gamma_{n'} \backslash \Gamma_{n'} X \Gamma_{n'}} \!\!\!\!\! \\
 && \times
  \psi|_{k,\mathcal{M}} 
   \left(M\times
     \left(\begin{smallmatrix}1&0&0&0\\0&N&0&0\\0&0&N^2&0\\0&0&0&N\end{smallmatrix}\right),
     [((\lambda_1,\lambda_2),(\mu_1,\mu_2)),0_2]\right),
\end{eqnarray*}
where $(\lambda_1,\lambda_2),(\mu_1,\mu_2) \in (\Z/N^2\Z)^{(n',1)} \times (\Z/N\Z)^{(n',1)}$.
See \S\ref{ss:Jacobi_group}
for the symbol of  the matrix
$
  \left(M\times
     \left(\begin{smallmatrix}1&0&0&0\\0&N&0&0\\0&0&N^2&0\\0&0&0&N\end{smallmatrix}\right),
     [(\lambda,\mu),0_2]\right).
$
The above summations are finite sums and do not depend on
the choice of the representatives $(\lambda_1,\lambda_2)$, $(\mu_1,\mu_2)$ and $M$.

Let $V(X)$ be the index-shift map for Jacobi forms of matrix index
defined in~\S\ref{ss:hecke_operators}.
\begin{lemma}\label{lem:V*_VU*}
We have
\begin{eqnarray*}
   \psi|V^*(X) &=& \psi|V(X)|U^*_{\smat{N^2}{0}{0}{1} }
\end{eqnarray*}
In particular, the function $\psi|V^*(X)$ belongs to $J_{k,\mathcal{M}}^{(n')}$.
It means that $V^*(X)$ is a map:
\begin{eqnarray*}
 V^*(X) \ : \  J_{k,\mathcal{M}[\left(\begin{smallmatrix}N&0\\0&1\end{smallmatrix}\right)]}^{(n')}
  \rightarrow J_{k,\mathcal{M}}^{(n')}.
\end{eqnarray*}
Moreover, if $\psi \in J_{k,\mathcal{M}[\left(\begin{smallmatrix}N&0\\0&1\end{smallmatrix}\right)]}^{(n')\, cusp}$,
then $\psi|V^*(X) \in J_{k,\mathcal{M}}^{(n')\, cusp}$
\end{lemma}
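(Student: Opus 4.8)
The plan is to prove the operator identity $\psi|V^*(X)=\psi|V(X)|U^*_{\smat{N^2}{0}{0}{1}}$ directly from the definitions; the two remaining assertions then follow at once. Indeed, since $\M=\smat{*}{*}{*}{1}$ one checks $\M[\smat{N}{0}{0}{1}]=\smat{*}{*}{*}{1}$, so the index-shift map $V(X)$ of \S\ref{ss:hecke_operators} applies to $\psi\in J_{k,\M[\smat{N}{0}{0}{1}]}^{(n')}$ and carries it into $J_{k,\M[\smat{N}{0}{0}{1}][\smat{N}{0}{0}{1}]}^{(n')}=J_{k,\M[\smat{N^2}{0}{0}{1}]}^{(n')}$, preserving cusp forms. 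Applying $U^*_{\smat{N^2}{0}{0}{1}}$ (the map of \S\ref{s:adjoint_maps} with $N$ replaced by $N^2$) then lands in $J_{k,\M}^{(n')}$ and again preserves cusp forms. Hence, once the identity is established, $\psi|V^*(X)\in J_{k,\M}^{(n')}$ and $\psi|V^*(X)$ is a cusp form whenever $\psi$ is.

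To prove the identity I would expand the right-hand side. First substitute the definition of $U^*_{\smat{N^2}{0}{0}{1}}$, which produces the prefactor $N^{-4n'}$, the substitution $U_{\smat{N^{-2}}{0}{0}{1}}$, and a sum over $\lambda_1,\mu_1\in(\Z/N^2\Z)^{(n',1)}$ of slashes by the Heisenberg elements $[((\lambda_1,0),(\mu_1,0)),0_2]$. Then substitute the definition of $V(X)$, a sum over $M\in\Gamma_{n'}\backslash\Gamma_{n'}X\Gamma_{n'}$ and over $u,v\in(\Z/N\Z)^{(n',1)}$ of slashes by $\left(M\times\left(\begin{smallmatrix}N^2&0&0&0\\0&N&0&0\\0&0&1&0\\0&0&0&N\end{smallmatrix}\right),[((0,u),(0,v)),0_2]\right)$. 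Using the substitution rule for $U_{\smat{N^{-2}}{0}{0}{1}}$ together with the cocycle formula for the slash operator recorded in \S\ref{ss:factors_automorphy}, the whole expression becomes a single sum over $M$, over $u,v$, and over $\lambda_1,\mu_1$ of terms indexed by one explicit element of $\mathrm{GSp}_{n'+2}^+(\R)$; note that $U_{\smat{N^{-2}}{0}{0}{1}}$ is an \emph{un-normalized} substitution, and the resulting $\det(V)^k$ discrepancy is exactly what supplies part of the eventual constant.

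The task is then to match this with the definition of $V^*(X)$ term by term. The effect of right-multiplying by the $U_{\smat{N^{-2}}{0}{0}{1}}$-element is to rescale the first $z$-coordinate by $N^{-2}$, which converts the $\mathrm{GSp}_2$-block $\left(\begin{smallmatrix}N^2&0&0&0\\0&N&0&0\\0&0&1&0\\0&0&0&N\end{smallmatrix}\right)$ of $V(X)$ into the block $\left(\begin{smallmatrix}1&0&0&0\\0&N&0&0\\0&0&N^2&0\\0&0&0&N\end{smallmatrix}\right)$ of $V^*(X)$ (the $(1,1)$-entries of the $U$- and $V$-parts exchange $N^2$ and $1$, the similitude $N^2$ being preserved). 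Simultaneously, the summation variables recombine: the second-slot translations $u,v\in(\Z/N\Z)^{(n',1)}$ coming from $V(X)$ play the role of $\lambda_2,\mu_2$, while the first-slot translations $\lambda_1,\mu_1\in(\Z/N^2\Z)^{(n',1)}$ coming from $U^*$ fill the remaining slots, so that the combined Heisenberg element is exactly $[((\lambda_1,\lambda_2),(\mu_1,\mu_2)),0_2]$ with the ranges appearing in $V^*(X)$. Finally, the prefactor $N^{-4n'}$ together with the $\det(V)^k$ contributions of the factors of automorphy assembles into the normalizing constant $N^{2k-4n'}$, completing the identification $\psi|V(X)|U^*_{\smat{N^2}{0}{0}{1}}=\psi|V^*(X)$.

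I expect the main obstacle to be precisely this last bookkeeping: performing the product inside $\mathrm{GSp}_{n'+2}^+(\R)$ carefully enough to see how the $(U,V)$-blocks and the lattice of Heisenberg translations transform under the right-multiplication, and then checking that the recombined translation ranges $(\Z/N^2\Z)\times(\Z/N\Z)$ match those in $V^*(X)$ while the determinant powers assemble into $N^{2k-4n'}$. No single step is conceptually deep, but the simultaneous tracking of the block matrices, the translation lattices, and the automorphy factors is where care is required; an adjointness argument via Lemma~\ref{lem:adjoint_U} would only yield the identity on the cusp subspace, whereas the stated lemma is an identity on all of $J_{k,\M[\smat{N}{0}{0}{1}]}^{(n')}$, so the explicit computation seems unavoidable.
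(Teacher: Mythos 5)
Your proposal is correct and follows the same route as the paper, whose proof of this lemma is exactly the assertion that the identity $\psi|V^*(X)=\psi|V(X)|U^*_{\smat{N^2}{0}{0}{1}}$ follows from unwinding the definitions of $V^*(X)$, $V(X)$ and $U^*_{\smat{N^2}{0}{0}{1}}$, with the mapping and cusp-form statements then immediate from the known properties of $V(X)$ and $U^*$. Your bookkeeping is accurate where you make it explicit: the translations recombine as $(\lambda_1,\lambda_2),(\mu_1,\mu_2)\in(\Z/N^2\Z)^{(n',1)}\times(\Z/N\Z)^{(n',1)}$, the $(U,V)$-blocks exchange as you describe, and $N^{-4n'}$ times the $\det(V)^{k}$ discrepancy $N^{2k}$ yields the prefactor $N^{2k-4n'}$, so you have in fact supplied more detail than the paper does.
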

\begin{proof}
The fist identity follows from the definitions of $V^*(X)$, $V(X)$ and $U^*_{\smat{N^2}{0}{0}{1}}$.
The other statements follow from this identity.
\end{proof}

\begin{lemma}\label{lem:adjoint_V}
  The map $V^*(X)$ is the adjoint map of
  $V(X)$ with respect to the Petersson inner product.
  It means that 
 we have
 \begin{eqnarray*}
   \langle \psi_1|V(X) , \psi_2 \rangle
   &=&
   \langle \psi_1, 
   \psi_2|V^*(X) \rangle
 \end{eqnarray*}
 for any $\psi_1 \in 
 J_{k,\mathcal{M}}^{(n')\, cusp}$ and any
 $\psi_2 \in 
 J_{k,\mathcal{M}\left[\left(\begin{smallmatrix} N & 0 \\ 0 & 1 \end{smallmatrix}
  \right) \right]}^{(n')\, cusp}$.
\end{lemma}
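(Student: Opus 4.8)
The plan is to establish the adjoint relation by the unfolding argument already used for Lemma~\ref{lem:adjoint_U}, the new feature being the double-coset sum concealed in $V(X)$. Fixing a decomposition $\Gamma_{n'} X \Gamma_{n'} = \bigsqcup_M \Gamma_{n'} M$ and recalling that, by its definition in \S\ref{ss:hecke_operators}, $\psi_1|V(X)$ is the finite sum of the slashes $\psi_1|_{k,\M}\gamma_M$ over the elements $\gamma_M := (M\times\mbox{diag}(N^2,N,1,N),[((0,u),(0,v)),0_2])$ with $u,v$ ranging over $(\Z/N\Z)^{(n',1)}$, I would first insert this expansion into the integral defining $\langle \psi_1|V(X),\psi_2\rangle$ over the fundamental domain $\mathcal{F}_{n',2}$ of $\Gamma_{n',2}^J\backslash(\H_{n'}\times\C^{(n',2)})$, so that each summand reads $\int_{\mathcal{F}_{n',2}} J_{k,\M}(\gamma_M,(\tau,z))^{-1}\psi_1(\gamma_M\cdot(\tau,z))\,\overline{\psi_2(\tau,z)}$ integrated against the weight $e^{-4\pi\tr(\M[\smat{N}{0}{0}{1}]v^{-1}[y])}\det(v)^{k-n'-3}$.

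The heart of the argument is the substitution $(\tau,z)\mapsto\gamma_M^{-1}\cdot(\tau,z)$ in each summand, which moves the group element off $\psi_1$ and onto $\psi_2$. Here the cocycle identity for $J_{k,\M}$ from \S\ref{ss:factors_automorphy}, together with the invariance of the measure $\det(v)^{-n'-3}\,du\,dv\,dx\,dy$ and the compatibility of the weight factor with the index shift $\M\mapsto\M[\smat{N}{0}{0}{1}]$ induced by $V(X)$, lets me replace $\overline{\psi_2(\tau,z)}$ by $\overline{(\psi_2|_{k,\M}\gamma_M^{-1})(\tau,z)}$ after transporting the integral to $\gamma_M\cdot\mathcal{F}_{n',2}$. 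Summing the images $\gamma_M\cdot\mathcal{F}_{n',2}$ over the cosets $M$ and over the lattice points $u,v$ reassembles a single integral whose integrand is $\psi_1(\tau,z)\,\overline{(\psi_2|V^*(X))(\tau,z)}$: the key point is that the family $\{\gamma_M^{-1}\}$ furnishes representatives of the dual double coset, in which the block $\mbox{diag}(N^2,N,1,N)$ is replaced by its scaled inverse $\mbox{diag}(1,N,N^2,N)$ and the translation ranges widen from $(\Z/N\Z)^{(n',1)}$ to $(\Z/N^2\Z)^{(n',1)}\times(\Z/N\Z)^{(n',1)}$, which is exactly the shape of $V^*(X)$ recorded in its definition.

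I expect the main obstacle to be purely the bookkeeping of scalar factors. The Jacobians of the substitutions scaling the two columns of $z$ by $N^2$ and $N$, the volume distortion of $\gamma_M\cdot\mathcal{F}_{n',2}$ relative to $\mathcal{F}_{n',2}$, the determinant factor $\det(V)^k$ coming from $J_{k,\M}$, and the cardinalities of the translation sets must be collected and shown to conspire into precisely the constant $N^{2k-4n'}$ prefixing the definition of $V^*(X)$; this is the analogue of the factor $N^{-2n'}$ that appeared in Lemma~\ref{lem:adjoint_U}, and tracking every power of $N$ is where the care is needed. I would also have to verify at the outset that $\{\gamma_M^{-1}\}$ indeed exhausts the dual double coset, which follows from the standard stability of symplectic double cosets under $g\mapsto n(g)\,g^{-1}$.

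Finally, as a sanity check and as a shortcut for part of the argument, one may instead invoke Lemma~\ref{lem:V*_VU*}: writing $\psi_2|V^*(X)=\psi_2|V(X)|U^*_{\smat{N^2}{0}{0}{1}}$ and applying the adjointness of $U^*$ from Lemma~\ref{lem:adjoint_U} reduces the claim to the identity $\langle\psi_1|V(X),\psi_2\rangle=\langle\psi_1|U_{\smat{N^2}{0}{0}{1}},\psi_2|V(X)\rangle$. Since proving this residual ``self-adjointness up to $U_{N^2}$'' of the bare operator $V(X)$ still requires the same unfolding over $\Gamma_{n'}X\Gamma_{n'}$, I would keep the direct computation above as the principal route.
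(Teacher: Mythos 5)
Your proposal is correct and takes essentially the same route as the paper: the paper's entire proof of Lemma~\ref{lem:adjoint_V} is the remark that it is ``similar to the one of Lemma~\ref{lem:adjoint_U}'', which is exactly the unfolding and change-of-variables argument you carry out, including the passage to the dual element $n(\gamma)\gamma^{-1}$ (turning $\mathrm{diag}(N^2,N,1,N)$ into $\mathrm{diag}(1,N,N^2,N)$) and the bookkeeping of powers of $N$ that you correctly flag as the only delicate point. Your closing observation that the shortcut via Lemma~\ref{lem:V*_VU*} and Lemma~\ref{lem:adjoint_U} still leaves a double-coset unfolding to do is also accurate, so keeping the direct computation as the principal route matches the paper's intent.
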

\begin{proof}
The proof is similar to the one of~Lemma \ref{lem:adjoint_U}.
\end{proof}

For the sake of simplicity we set
\begin{eqnarray*}
 V_{\alpha,n'-\alpha}^*(p^2) 
 &:=&
 V^*(\mbox{diag}(1_{\alpha},p 1_{n'-\alpha}, p^2 1_{\alpha}, p 1_{n'-\alpha}))
\end{eqnarray*}
for any prime $p$ and for any $\alpha$ $(0\leq \alpha \leq n')$.

\subsection{Adjoint map $\tilde{V}^*$ for Jacobi forms of half-integral weight}
We define the index-shift map $\tilde{V}^*_{\alpha,n'-\alpha}(p^2)$ for Jacobi forms
of half-integral weight.

\begin{df}\label{df:V*_pp}
For any prime $p$ and for any $\phi \in J_{k-\frac12,mp^2}^{+(n')}$
we define
\begin{eqnarray*}
  \phi|\tilde{V}^*_{\alpha,n'-\alpha}(p^2)
  &:=&
  p^{k(2n'+1)-n'(n'+\frac72)+\frac12 \alpha}
  \iota_{\M}\left( \left(\iota_{\M\left[ \smat{p}{0}{0}{1} \right]}^{-1}(\phi)\right)|V^*_{\alpha,n'-\alpha}(p^2)\right) .
\end{eqnarray*}
\end{df}

\begin{lemma}\label{lem:v_tilde_adj}
For any prime $p$,
for any $\phi_1 \in J_{k-\frac12,m}^{+(n')\, cusp}$
and for any $\phi_2 \in J_{k-\frac12,mp^2}^{+(n')\, cusp}$,
we have
\begin{eqnarray*}
  \langle \phi_1|\tilde{V}_{\alpha,n'-\alpha}(p^2), \phi_2 \rangle
  &=&
  \langle \phi_1, \phi_2|\tilde{V}^*_{\alpha,n'-\alpha}(p^2) \rangle.
\end{eqnarray*}
\end{lemma}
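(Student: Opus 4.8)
The plan is to reduce the assertion to the adjointness of the integral-weight index-shift map $V_{\alpha,n'-\alpha}(p^2)$, which is already established in Lemma~\ref{lem:adjoint_V}, by transporting the whole computation across the linear isomorphism $\iota_{\M}$. Concretely, I would fix $\M \in L_2^+$ with $\det(2\M) = m$, so that $\det(2\,\M[\smat{p}{0}{0}{1}]) = mp^2$, and set $\psi_1 := \iota_{\M}^{-1}(\phi_1) \in J_{k,\M}^{(n')\, cusp}$ and $\psi_2 := \iota_{\M[\smat{p}{0}{0}{1}]}^{-1}(\phi_2) \in J_{k,\M[\smat{p}{0}{0}{1}]}^{(n')\, cusp}$. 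The two forms paired on the left, namely $\phi_1|\tilde{V}_{\alpha,n'-\alpha}(p^2)$ and $\phi_2$, both lie in $J_{k-\frac12,mp^2}^{+(n')\, cusp}$, hence correspond under $\iota_{\M[\smat{p}{0}{0}{1}]}$ to integral-weight Jacobi forms of index $\M[\smat{p}{0}{0}{1}]$.

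First I would rewrite the left-hand side using Lemma~\ref{lem:iota_petersson} at index $\M[\smat{p}{0}{0}{1}]$, which converts the half-integral Petersson pairing into $2^{-2n'(k-1)}$ times the integral-weight pairing of the corresponding $\psi$'s. Then Lemma~\ref{lem:iota_v} identifies $\iota_{\M[\smat{p}{0}{0}{1}]}^{-1}(\phi_1|\tilde{V}_{\alpha,n'-\alpha}(p^2))$ with $p^{c}\,\psi_1|V_{\alpha,n'-\alpha}(p^2)$, where $c = k(2n'+1)-n'(n'+\frac72)+\frac12\alpha$ is precisely the exponent occurring in Definition~\ref{df:V*_pp}. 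Thus the left-hand side becomes $2^{-2n'(k-1)}\,p^{c}\,\langle \psi_1|V_{\alpha,n'-\alpha}(p^2),\, \psi_2\rangle$.

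Next I would apply the integral-weight adjointness (Lemma~\ref{lem:adjoint_V}) to move $V_{\alpha,n'-\alpha}(p^2)$ onto the second argument, obtaining $2^{-2n'(k-1)}\,p^{c}\,\langle \psi_1,\, \psi_2|V^*_{\alpha,n'-\alpha}(p^2)\rangle$. Both arguments now lie in $J_{k,\M}^{(n')\, cusp}$, so a second application of Lemma~\ref{lem:iota_petersson}, this time at index $\M$, restores the factor $2^{2n'(k-1)}$ and rewrites the pairing as $p^{c}\,\langle \phi_1,\, \iota_{\M}(\psi_2|V^*_{\alpha,n'-\alpha}(p^2))\rangle$. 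The two powers of $2$ cancel exactly, since the normalizing constant in Lemma~\ref{lem:iota_petersson} depends only on $k$ and $n'$ and not on the chosen index. Finally, Definition~\ref{df:V*_pp} states precisely that $p^{c}\,\iota_{\M}(\psi_2|V^*_{\alpha,n'-\alpha}(p^2)) = \phi_2|\tilde{V}^*_{\alpha,n'-\alpha}(p^2)$, which yields the claimed identity.

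Since every ingredient is an already-proved lemma or a definition, the argument is essentially bookkeeping, and I do not anticipate a genuine obstacle. The only point requiring care is confirming that the constant $p^{c}$ produced by Lemma~\ref{lem:iota_v} is the same one built into Definition~\ref{df:V*_pp}, and that the factors $2^{-2n'(k-1)}$ and $2^{2n'(k-1)}$ from the two invocations of Lemma~\ref{lem:iota_petersson} cancel; this is where an exponent slip could creep in, so I would verify those exponents explicitly rather than the (routine) transfer itself.
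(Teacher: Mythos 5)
Your proof is correct and follows exactly the route the paper takes: its proof of this lemma is the one-line citation of Lemma~\ref{lem:iota_petersson}, Lemma~\ref{lem:iota_v} and Lemma~\ref{lem:adjoint_V}, which you have simply unpacked, and your bookkeeping of the constant $p^{c}$ with $c = k(2n'+1)-n'(n'+\frac72)+\frac12\alpha$ and the cancelling factors $2^{\pm 2n'(k-1)}$ checks out against Definition~\ref{df:V*_pp}. No gaps.
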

\begin{proof}
This is due to Lemma~\ref{lem:iota_petersson}, Lemma~\ref{lem:iota_v} and Lemma~\ref{lem:adjoint_V}.
\end{proof}

\begin{lemma}\label{lem:v_tilde_adj_2}
 We have
 \begin{eqnarray*}
   \phi| \tilde{V}^*_{\alpha,n'-\alpha}(p^2)
   &=&
   \phi | \tilde{V}_{\alpha,n'-\alpha}(p^2) | U^*_{p^2}
 \end{eqnarray*}
 for any prime $p$ and for any $\phi \in J_{k-\frac12,mp^2}^{+(n')\, cusp}$.
\end{lemma}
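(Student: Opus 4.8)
The plan is to unfold the two definitions involved---Definition~\ref{df:V*_pp} for $\tilde{V}^*_{\alpha,n'-\alpha}(p^2)$ and Definition~\ref{df:U*_N} for $U^*_{p^2}$---transport everything to integral weight through the isomorphism $\iota$, and reduce the whole statement to the integral-weight identity $V^*(X) = V(X)|U^*_{\smat{N^2}{0}{0}{1}}$ of Lemma~\ref{lem:V*_VU*}. The point is that both half-integral-weight maps are \emph{defined} as the $\iota$-transports of their integral-weight counterparts, so the asserted identity should follow formally once the indices are tracked.

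First I would fix $\M \in L_2^+$ with $\det(2\M)=m$ and set $\psi := \iota_{\M[\smat{p}{0}{0}{1}]}^{-1}(\phi)$, so that $\psi \in J_{k,\M[\smat{p}{0}{0}{1}]}^{(n')\,cusp}$; note that $\det(2\,\M[\smat{p}{0}{0}{1}])=mp^2$ matches the index of $\phi$. Writing $c_\alpha := k(2n'+1)-n'(n'+\frac72)+\frac12\alpha$, Definition~\ref{df:V*_pp} gives
\[
\phi|\tilde{V}^*_{\alpha,n'-\alpha}(p^2) = p^{c_\alpha}\,\iota_{\M}\!\left(\psi|V^*_{\alpha,n'-\alpha}(p^2)\right).
\]
Since $V_{\alpha,n'-\alpha}(p^2)=V(X)$ with $n(X)=p^2$, i.e. $N=p$, Lemma~\ref{lem:V*_VU*} rewrites the inner term as $\psi|V_{\alpha,n'-\alpha}(p^2)|U^*_{\smat{p^2}{0}{0}{1}}$, which lands in $J_{k,\M}^{(n')}$.

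Next I would expand the right-hand side. By Lemma~\ref{lem:iota_v} applied with base index $\M[\smat{p}{0}{0}{1}]$,
\[
\phi|\tilde{V}_{\alpha,n'-\alpha}(p^2) = p^{c_\alpha}\,\iota_{\M[\smat{p^2}{0}{0}{1}]}\!\left(\psi|V_{\alpha,n'-\alpha}(p^2)\right) \in J_{k-\frac12,mp^4}^{+(n')}.
\]
Applying $U^*_{p^2}$ (Definition~\ref{df:U*_N} with $N=p^2$, hence $\smat{N}{0}{0}{1}=\smat{p^2}{0}{0}{1}$) and cancelling $\iota_{\M[\smat{p^2}{0}{0}{1}]}$ against its inverse then yields exactly $p^{c_\alpha}\,\iota_{\M}\!\left(\psi|V_{\alpha,n'-\alpha}(p^2)|U^*_{\smat{p^2}{0}{0}{1}}\right)$, which is the same expression obtained for the left-hand side; comparing the two finishes the argument.

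The computation is entirely formal, so there is no real analytic or combinatorial obstacle; the only thing requiring care---and the place where a slip is easiest---is the index bookkeeping. One must keep $\M[\smat{p}{0}{0}{1}]$ (used to define $\tilde{V}^*$ and to realize $\phi$ as an $\iota$-image) distinct from $\M[\smat{p^2}{0}{0}{1}]$ (the index attached to $\phi|\tilde{V}_{\alpha,n'-\alpha}(p^2)$, on which $U^*_{p^2}$ with $N=p^2$ acts), and verify that the single factor $p^{c_\alpha}$ appears exactly once on each side. Once these match, the identity is immediate from Lemma~\ref{lem:V*_VU*}.
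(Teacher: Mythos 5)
Your proof is correct and follows exactly the route the paper intends: the paper's own proof of this lemma is a one-line citation of Lemma~\ref{lem:V*_VU*}, Definition~\ref{df:U*_N}, Lemma~\ref{lem:iota_v} and Definition~\ref{df:V*_pp}, which are precisely the four ingredients you unfold, with the index bookkeeping ($\M[\smat{p}{0}{0}{1}]$ versus $\M[\smat{p^2}{0}{0}{1}]$) handled correctly. Your version merely spells out what the paper leaves implicit (including that for $p=2$ the identity of Lemma~\ref{lem:iota_v} holds by the very definition of $\tilde{V}_{\alpha,n'-\alpha}(4)$), so there is nothing to add.
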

\begin{proof}
This lemma follows from Lemma~\ref{lem:V*_VU*}, Definition~\ref{df:U*_N},
Lemma~\ref{lem:iota_v} and Definition~\ref{df:V*_pp}.
\end{proof}

\subsection{Adjoint map $D^*$ for Jacobi forms of half-integral weight}
Let $n' = 2n-2$ and let the symbol $D_{n'}(N^2)$ be as in \S\ref{s:maass_relation}.

In this section we will give the adjoint map of $D_{n'}(N^2)$ with respect to the
Petersson inner product.

\begin{prop}\label{prop:adjoint_D}
 The adjoint map $D_{n'}^*(N^2)$ of $D_{n'}(N^2)$ with respect to
 the Petersson inner product is given by
 \begin{eqnarray*}
   \phi | D_{n'}^*(N^2) &=& \phi|D_{n'}(N^2)|U_{N^2}^*.
 \end{eqnarray*}
 for any $\phi \in J_{k-\frac12,m N^2}^{+(n')}$.
 It means we have
 \begin{eqnarray*}
   \langle \psi_1|D_{n'}(N^2), \psi_2 \rangle
   &=&
   \langle \psi_1, \psi_2|D_{n'}(N^2)|U_{N^2}^* \rangle
 \end{eqnarray*}
  for any $\psi_1 \in J_{k-\frac12,m}^{+(n')\, cusp}$ and any
 $\psi_2 \in J_{k-\frac12,mN^2}^{+(n')\, cusp}$.
\end{prop}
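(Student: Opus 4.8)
The plan is to reduce the assertion to two elementary adjoint formulas that are already available, and then to propagate them through the multiplicative structure defining $D_{n'}(N^2)$. First I would record the two facts on which everything rests. By Lemma~\ref{lem:v_tilde_adj} together with Lemma~\ref{lem:v_tilde_adj_2} the adjoint of $\tilde{V}_{1,2n-3}(p^2)$ is $\tilde{V}_{1,2n-3}(p^2)\,U^*_{p^2}$, and by Lemma~\ref{lem:u_adj} together with Lemma~\ref{lem:u_adj_2}(2) (taken with $N=M=p$) the adjoint of $U_p$ is $U^*_p = U_p\,U^*_{p^2}$. Thus both elementary index-shift operators out of which $\hat{D}_{n'}(p^2)$ is built have the common shape ``operator followed by $U^*_{p^2}$'', which is exactly the shape the Proposition predicts for $D_{n'}(N^2)$.

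Second, I would settle the single prime $N=p$. Since $\hat{D}_{n'}(p^2) = a_2^{-1}\bigl(\tilde{V}_{1,2n-3}(p^2) - a_1 U_p\bigr)$ with $a_1,a_2\in\R$ (the reality following from $a_f(p)\in\R$, hence $\alpha_p+\alpha_p^{-1}\in\R$), linearity of the adjoint and the first step give $\hat{D}^*_{n'}(p^2) = \hat{D}_{n'}(p^2)\,U^*_{p^2}$. Combining this with the relation $D_{n'}(p^2) = \hat{D}_{n'}(p^2) - \left(\frac{-m}{p}\right)p^{k-2}U_p$ used in the proof of Lemma~\ref{lem:DD}, and again using that the Legendre symbol is real, I obtain $D^*_{n'}(p^2) = D_{n'}(p^2)\,U^*_{p^2}$, which is the asserted identity for $N=p$.

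Third, I would extend to prime powers $N=p^{\delta}$ by induction, using the recursions encoded in the generating-function definitions of $\{\hat{D}_{n'}(p^{2\delta})\}$ and $\{D_{n'}(p^{2\delta})\}$, equivalently the product identities of Lemma~\ref{lem:DD}. Taking adjoints reverses the order of composition, so at each inductive step I must commute a factor $U^*_{p^{2}}$ past $\hat{D}_{n'}(p^2)$ and $U_{p^2}$, and then collapse the nested factors $U^*_{p^{2}}\cdots U^*_{p^{2}}$ into a single $U^*_{p^{2\delta}}$ by repeated application of Lemma~\ref{lem:u_adj_2}(2). Finally I would pass from prime powers to general $N$ by multiplicativity: the operators attached to distinct primes are compatible, and Lemma~\ref{lem:u_adj_2}(3),(4) allow the $U^*$ factors for coprime moduli to be separated and recombined into $U^*_{N^2}$, yielding $D^*_{n'}(N^2)=D_{n'}(N^2)\,U^*_{N^2}$ in general.

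I expect the main obstacle to be the third step. Because the adjoint is an anti-homomorphism, the clean factored form ``$(\text{shift})\circ U^*_{p^{2\delta}}$'' is not preserved termwise under the recursion; one has to verify that, after reversing all compositions, the stray $U^*_{p^2}$ factors commute correctly past $\hat{D}_{n'}(p^2)$ and $U_{p^2}$ and telescope \emph{exactly} into $U^*_{p^{2\delta}}$. This is precisely where the compatibility of $\hat{D}_{n'}(p^2)$ and $U_{p^2}$ with the $U^*$ maps, together with the product identities of Lemma~\ref{lem:DD}, must be invoked with care. Once the prime-power case is in hand, the coprime recombination of the last step is routine by comparison with the inner-product characterization of the adjoint in Lemma~\ref{lem:u_adj}.
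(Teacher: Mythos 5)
Your proposal is correct and takes essentially the same route as the paper: both reduce the claim to the generator-level adjoint formulas $\tilde{V}_{1,2n-3}(p^2)^{*}=\tilde{V}_{1,2n-3}(p^2)\,U^{*}_{p^2}$ and $U_p^{*}=U_p\,U^{*}_{p^2}$ (Lemmas~\ref{lem:v_tilde_adj}, \ref{lem:v_tilde_adj_2}, \ref{lem:u_adj}, \ref{lem:u_adj_2}) and then propagate through the linear combinations of compositions of $\tilde{V}_{1,2n-3}(p^2)$ and $U_{p^2}$ defining $D_{n'}(N^2)$, using the compatibility of the index-shift maps and the $U^{*}$-rearrangement identities of Lemma~\ref{lem:u_adj_2} to telescope the stray $U^{*}$ factors into $U^{*}_{N^2}$, exactly as in the paper's displayed inner-product computation with the monomials $U_{p^{2\delta}}\tilde{V}_{1,2n-3}(p^2)$. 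Your explicit observation that the coefficients $a_1$, $a_2$ and the character values are real, so that taking adjoints preserves the linear combination, is a point the paper leaves implicit.
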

\begin{proof}
For a prime $p$
we write $V_{n'}(p^2) = \tilde{V}_{1,2n-3}(p^2)$ and $V_{n'}^*(p^2) = \tilde{V}_{1,2n-3}^*(p^2)$
for the sake of simplicity.
Due to 
Lemma~\ref{lem:v_tilde_adj}, Lemma~\ref{lem:v_tilde_adj_2},
Lemma~\ref{lem:u_adj} and Lemma~\ref{lem:u_adj_2},
for Jacobi cusp forms $\psi_1$ and $\psi_2$ with suitable indices, we have
\begin{eqnarray*}
   \langle \psi_1| U_{p^{2\delta}} | V_{n'}(p^2), \psi_2 \rangle  
   &=&
   \langle \psi_1| U_{p^{2\delta}} , \psi_2 |V_{n'}^*(p^2) \rangle  
   \ = \
   \langle \psi_1| U_{p^{2\delta}} , \psi_2 |V_{n'}(p^2) | U_{p^2}^* \rangle  \\   
   &=&
   \langle \psi_1 , \psi_2 |  V_{n'}(p^2)  | U_{p^{2\delta}} | U_{p^{4\delta+2}}^*  \rangle 
   \ = \
   \langle \psi_1 , \psi_2 |   U_{p^{2\delta}} | V_{n'}(p^2)  | U_{p^{4\delta+2}}^*  \rangle .
\end{eqnarray*}
Since $D_{n'}(N)$ is generated by a linear combination
of compositions of $V_{n'}(p^2)$ and $U_{p^2}$,
and since $\{V_{n'}(p^2)\}_p$ and $\{U_N\}_N$ are compatible,
we conclude 
  $\langle \psi_1|D_{n'}(N^2), \psi_2 \rangle
   =
   \langle \psi_1, \psi_2|D_{n'}(N^2)|U_{N^2}^* \rangle$
by similar arguments as above.
\end{proof}

\section{Jacobi-Eisenstein series and index-shift maps}\label{s:Jacobi_Eisenstein}
Let $n'$ be a natural number.
In this section we introduce Fourier-Jacobi coefficients $\{e_{k-\frac12,m}^{(n')}\}_m$
of a generalized Cohen-Eisenstein series.
Here $e_{k-\frac12,m}^{(n')} \in J_{k-\frac12,m}^{+(n')}$.
Moreover, we shall give a formula for $e_{k-\frac12,mN^{2}}^{(n')}| U_{N}^*$,
where $U^*_{N}$ is the adjoint map introduced in \S\ref{ss:adjoint_map_of_u_half}.

\subsection{Jacobi-Eisenstein series}
In this subsection we review some Jacobi-Eisenstein series and their Fourier-Jacobi coefficients.

Let $r > 0$ be an integer and let $\M \in L_r^+$.
For an even integer $k > n' + r + 1$, the Jacobi-Eisenstein series of weight $k$ 
of index $\M$ of degree $n'$ is defined by
\begin{eqnarray*}
  E_{k,\M}^{(n')}
  &:=&
  \sum_{M \in \Gamma_{\infty}^{(n')}\backslash \Gamma_n}\sum_{\lambda \in \Z^{(n',r)}}
  1|_{k,\mathcal{M}}([(\lambda,0),0_r], M) .
\end{eqnarray*}

We take $\M = 1 \in \Z_{>0}$. We define
\begin{eqnarray*}
  \mathcal{H}_{k-\frac12}^{(n')} &:=& \iota_{1} (E_{k,1}^{(n')}),
\end{eqnarray*}
where the linear map $\iota_1  :  J_{k,1}^{(n')} \rightarrow M_{k-\frac12}^{+(n')}$ 
is defined in \S\ref{s:lin_iso_jacobi}.
The form $\mathcal{H}_{k-\frac12}^{(n')}$ belongs to the plus-space $M_{k-\frac12}^{+(n')}$.
In this article
we call $\mathcal{H}_{k-\frac12}^{(n')}$ a generalized Cohen-Eisenstein series,
since $\mathcal{H}_{k-\frac12}^{(1)}$ has been introduced by Cohen \cite{Co}.
(See also \cite{Ar98}).

For $m \in \Z$ we  denote by $e_{k-\frac12,m}^{(n')}$ the $m$-th Fourier-Jacobi
coefficient of $\mathcal{H}_{k-\frac12}^{(n'+1)}$, it means 
\begin{eqnarray*}
  \mathcal{H}_{k-\frac12}^{(n'+1)}\left(\begin{pmatrix} \tau & z \\ ^t z & \omega \end{pmatrix} \right)
  &=&
  \sum_{m \in \Z} e_{k-\frac12,m}^{(n')}(\tau,z) e(m \omega),
\end{eqnarray*}
for $\tau \in \H_{n'}$, $z \in \C^{n'}$ and $\omega \in \H_1$.
We remark that $e_{k-\frac12,m}^{(n')} = 0$ unless $m \equiv 0$, $3$ $\!\! \mod 4$,
since $\mathcal{H}_{k-\frac12}^{(n'+1)}$ belongs to the plus-space $M_{k-\frac12}^{+(n'+1)}$.
Remark also that $e_{k-\frac12,m}^{(n')}$ is a Jacobi form which belongs to $J_{k-\frac12,m}^{+(n')}$.

On the other hand,
we take a Fourier-Jacobi expansion of $E_{k,1}^{(n'+1)}$:
\begin{eqnarray*}
  E_{k,1}^{(n'+1)}\left(\begin{pmatrix} \tau & z_1 \\ ^t z_1 & \omega_1 \end{pmatrix},
  \begin{pmatrix} z_2 \\ \omega_2 \end{pmatrix} \right)
  e(\omega_3)
  &=&
  \sum_{\begin{smallmatrix} \M \in L_2^* \\ 
  \M = \left( \begin{smallmatrix} * & * \\ * & 1 \end{smallmatrix}\right) \end{smallmatrix}} 
  e_{k,\M}^{(n')}(\tau, (z_1,z_2)) 
  e(\M \begin{pmatrix} \omega_1 & \omega_2 \\
  ^t \omega_2 & \omega_3 \end{pmatrix}),
\end{eqnarray*}
where $\begin{pmatrix} \tau & z_1 & z_2 \\ ^t z_1 & \omega_1 & \omega_2 \\
^t z_2 & ^t \omega_2 & \omega_3 \end{pmatrix} \in \H_{n'+2}$,
$\tau \in \H_{n'}$, $\omega_1 \in \H_1$ and $\omega_3 \in \H_1$.
The form $e_{k,\M}^{(n')}$ is a Jacobi form which belongs to $J_{k,\M}^{(n')}$.
\begin{lemma}\label{lem:iota_jacobi_eisen}
 For $\M = \begin{pmatrix} * & * \\ * & 1 \end{pmatrix} \in L_2^+$ we put $m = \det(2 \M)$.
 Then we have
 \begin{eqnarray*}
    \iota_{\M}(e_{k,\M}^{(n')}) &=& e_{k-\frac12,m}^{(n')}.
 \end{eqnarray*}
\end{lemma}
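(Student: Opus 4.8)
The plan is to prove the identity by comparing Fourier coefficients, since a holomorphic Jacobi form is determined by them and both sides lie in $J_{k-\frac12,m}^{+(n')}$. Both sides are manufactured from the single integral-weight form $E_{k,1}^{(n'+1)}$, so I would first fix notation for its Fourier expansion,
\begin{eqnarray*}
  E_{k,1}^{(n'+1)}(Z,w)
  &=&
  \sum_{\begin{smallmatrix} T \in L_{n'+1}^* \\ S \in \Z^{(n'+1,1)} \end{smallmatrix}}
  c(T,S)\, e(TZ)\, e({}^t S\, w),
\end{eqnarray*}
for $Z \in \H_{n'+1}$ and $w \in \C^{(n'+1,1)}$. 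Because the index is the scalar $1$, the transformation law under $\Gamma_{n'+1,1}^J$ shows that $c(T,S)$ depends only on the discriminant matrix $4T - S\,{}^t S$ and on the class $S \bmod 2\Z^{(n'+1,1)}$; this invariance is the one fact I will use throughout.

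For the right-hand side I would read off the Fourier coefficients of $e_{k,\M}^{(n')}$. Writing $\M = \smat{\M_1}{\frac12 L}{\frac12 L}{1}$ and regarding $E_{k,1}^{(n'+1)}(Z,w)e(\omega_3)$ as a function on $\H_{n'+2}$ whose Fourier index has bottom-right $2\times 2$ block equal to $\M$, one obtains that the coefficient of $e_{k,\M}^{(n')}$ at $(N',(N_3,R))$, with $N' \in L_{n'}^*$ and $N_3,R \in \Z^{(n',1)}$, equals $c\!\left(\smat{N'}{\frac12 N_3}{\frac12 {}^t N_3}{\M_1}, \left(\begin{smallmatrix} R \\ L \end{smallmatrix}\right)\right)$. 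Feeding this into the theta decomposition (so that $\mathfrak{M}=m=4\M_1-L^2$) and then into the defining formula $\iota_\M(\psi)(\tau,z_1)=\sum_{R} f_R(4\tau,4z_1)$, the coefficient of $e(A\tau)\,e(B\,{}^t z_1)$ in $\iota_\M(e_{k,\M}^{(n')})$ becomes $\sum_{R \bmod 2} c(T,S)$, where $S = {}^t(R,L)$ and $T$ is the half-integral matrix determined by $4T-S\,{}^tS = \smat{A}{\frac12 B}{\frac12 {}^t B}{m}$.

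For the left-hand side I would apply $\iota_1$ to $E_{k,1}^{(n'+1)}$ by the same theta decomposition, getting $\mathcal{H}_{k-\frac12}^{(n'+1)}(Z)=\sum_{R^{\mathrm{full}} \bmod 2}\sum_{N} c(N,R^{\mathrm{full}})\,e(4(N-\frac14 R^{\mathrm{full}}\,{}^tR^{\mathrm{full}})Z)$, and then extract the $m$-th Fourier--Jacobi coefficient in the last diagonal variable $\omega$. Splitting $N=\smat{N_1}{\frac12 N_3}{\frac12 {}^t N_3}{t}$ and $R^{\mathrm{full}}={}^t(R,s)$, the $\omega$-exponent is $e((4t-s^2)\omega)$, so extracting the $m$-th coefficient imposes $4t-s^2=m$; the resulting coefficient of $e(A\tau)\,e(B\,{}^t z_1)$ is $\sum_{R,s \bmod 2} c(T,S)$ with $S={}^t(R,s)$ and with the very same discriminant $\smat{A}{\frac12 B}{\frac12 {}^t B}{m}$.

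The two expressions must now be matched, and this is the only genuine point. In the left-hand computation the apparently extra summation over $s \bmod 2$ collapses: since $m = 4\M_1 - L^2 \equiv -L^2 \bmod 4$ lies in $\{0,3\}$, the equation $4t - s^2 = m$ forces $s \equiv L \bmod 2$, while the opposite parity admits no integral $t$ and contributes nothing. Hence both sums effectively run over $R \bmod 2$ with the last coordinate frozen at $L \bmod 2$, and for each $R$ the two terms carry the same discriminant and the same class $S \bmod 2$; by the index-$1$ invariance recorded above they coincide. The Fourier coefficients of the two sides therefore agree, and the lemma follows. I expect the main obstacle to be exactly this parity bookkeeping — checking that the plus-space condition $m\equiv 0,3 \bmod 4$ is precisely what freezes the superfluous residue so that the two theta-lifts match term by term, rather than the left side carrying twice as many contributions.
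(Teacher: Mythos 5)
Your proof is correct and follows essentially the same route as the paper: the paper proves the lemma by asserting that the square formed by $\iota_1$, $\iota_{\M}$ and the two Fourier--Jacobi expansions commutes, and your Fourier-coefficient computation (reducing both sides to sums of coefficients $c(T,S)$ of $E_{k,1}^{(n'+1)}$ indexed by the discriminant $4T-S\,{}^tS$ and $S \bmod 2$) is exactly a term-by-term verification of that commutativity. Your parity argument showing that $4t-s^2=m$ freezes $s \equiv L \bmod 2$ is a genuine detail the paper leaves implicit, and you have it right.
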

\begin{proof}
 From the definition of two linear maps  $\iota_1$ and $\iota_{\M}$
 and from the definition of the Fourier-Jacobi expansions,
 the diagram
 $$
\begin{CD}
 J_{k,1}^{(n'+1)} @>\iota_1 >> M_{k-\frac12}^{+(n'+1)} \\
 @VVV
 @VVV \\
 J_{k,\mathcal{M}}^{(n')} @>\iota_{\mathcal{M}}
 >> J_{k-\frac12,m}^{+(n')} 
\end{CD}
$$
is commutative, where two down arrows are given by the Fourier-Jacobi expansions.
Thus this lemma follows from the definitions of $e_{k-\frac12,m}^{(n')}$ and 
$e_{k,\M}^{(n')}$.
\end{proof}

We now describe $e_{k,\M}^{(n')}$ as a linear combination of the Jacobi-Eisenstein
series $\{ E_{k,\M'}^{(n')} \}_{\M'}$.

We denote by $h_{k-\frac12}(m)$ the $m$-th Fourier coefficient of 
the Cohen-Eisenstein series $\mathcal{H}_{k-\frac12}^{(1)}$.
It means $\displaystyle{\mathcal{H}_{k-\frac12}^{(1)}(\tau) = \sum_{m} h_{k-\frac12}(m) e(m\tau)}$.

Let $m$ be a natural number such that $-m = D_0 f^2$ with a fundamental discriminant
$D_0$ and with a natural number $f$.
It is obvious that $m \equiv 0, 3 \!\! \mod 4$.
We define
\begin{eqnarray*}
  g_k(m) &:=& \sum_{d|f} \mu(d)\, h_{k-\frac12}\!\left( \frac{m}{d^2} \right),
\end{eqnarray*}
where $\mu$ is the M\"obius function.

We will use the following lemma for the proof of Proposition~\ref{prop:e_u_tilde}.
\begin{lemma}\label{lem:gk}
Let $m'$ be a natural number such that $-m' \equiv 0$, $1 \mod 4$. 
Then for any prime $p$ we have
\begin{eqnarray*}
 g_k(p^2m') &=& 
  \left(p^{2k-3} - \left(\frac{-m'}{p} \right) p^{k-2} \right) g_k(m') . 
\end{eqnarray*}
\end{lemma}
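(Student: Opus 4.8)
\medskip

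\noindent\textbf{Proof proposal.} The plan is to reduce the statement to Cohen's explicit formula for the Fourier coefficients $h_{k-\frac12}(m)$ and then to a short computation with Dirichlet convolutions, localized at the prime $p$. First I would recall that $\mathcal{H}_{k-\frac12}^{(1)}$ is Cohen's Eisenstein series \cite{Co}, so that for a natural number $m$ with $-m = D_0 f^2$ ($D_0$ a fundamental discriminant, $f \in \Z_{>0}$) one has
\begin{eqnarray*}
  h_{k-\frac12}(m) &=& L(2-k, \chi_{D_0}) \sum_{e | f} \mu(e)\, \chi_{D_0}(e)\, e^{k-2}\, \sigma_{2k-3}(f/e),
\end{eqnarray*}
where $\chi_{D_0} = \left(\frac{D_0}{\cdot}\right)$ is the Kronecker character and $\sigma_{2k-3}(t) = \sum_{a | t} a^{2k-3}$. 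Writing $c := (\mu\,\chi_{D_0}\,\mathrm{id}^{k-2}) * \sigma_{2k-3}$ for the multiplicative ``conductor factor'' (here $*$ denotes Dirichlet convolution), this reads $h_{k-\frac12}(m) = L(2-k,\chi_{D_0})\, c(f)$.

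Next, since every divisor $d \mid f$ gives $-m/d^2 = D_0 (f/d)^2$ with the same fundamental discriminant $D_0$, the definition of $g_k$ becomes
\begin{eqnarray*}
  g_k(m) &=& L(2-k, \chi_{D_0}) \sum_{d | f} \mu(d)\, c(f/d) \ = \ L(2-k,\chi_{D_0})\, b(f),
\end{eqnarray*}
where $b := \mu * c = \mu * (\mu\,\chi_{D_0}\,\mathrm{id}^{k-2}) * \sigma_{2k-3}$ is again multiplicative. I would then compute the Bell series of $b$ at each prime $p$: the three convolution factors contribute $1-X$, $1 - \chi_{D_0}(p) p^{k-2} X$ and $((1-X)(1-p^{2k-3}X))^{-1}$ respectively, so that
\begin{eqnarray*}
  \sum_{j \geq 0} b(p^j)\, X^j &=& \frac{1 - \chi_{D_0}(p)\, p^{k-2}\, X}{1 - p^{2k-3}\, X},
\end{eqnarray*}
whence $b(1) = 1$ and $b(p^j) = p^{(2k-3)(j-1)}\left(p^{2k-3} - \chi_{D_0}(p)\, p^{k-2}\right)$ for $j \geq 1$.

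Finally I would conclude by localizing at $p$. Write $-m' = D_0 (f')^2$, so that $-p^2 m' = D_0 (p f')^2$ has the same $D_0$ and conductor $pf'$. Factoring $f' = p^a f''$ with $p \nmid f''$ and using multiplicativity of $b$, the desired identity $g_k(p^2 m') = (\cdots)\, g_k(m')$ reduces to $b(p^{a+1}) = (\cdots)\, b(p^a)$, and the formula for $b(p^j)$ gives $b(p^{a+1}) = \left(p^{2k-3} - \chi_{D_0}(p)p^{k-2}\right) b(p^a)$ when $a = 0$ and $b(p^{a+1}) = p^{2k-3}\, b(p^a)$ when $a \geq 1$. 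It remains to match $\chi_{D_0}(p)$ with $\left(\frac{-m'}{p}\right)$: if $p \nmid f'$ then $\left(\frac{-m'}{p}\right) = \left(\frac{D_0 (f')^2}{p}\right) = \chi_{D_0}(p)$ and the $p$-local factor is $p^{2k-3} - \chi_{D_0}(p) p^{k-2}$, while if $p \mid f'$ then $p \mid m'$ forces $\left(\frac{-m'}{p}\right) = 0$ and the $p$-local factor is $p^{2k-3}$. In both cases the factor equals $p^{2k-3} - \left(\frac{-m'}{p}\right) p^{k-2}$, which gives the claim.

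The computation is entirely routine once Cohen's formula is in hand; the only point requiring care is the final matching step, where one must treat the cases $p \nmid f'$ and $p \mid f'$ separately and keep track of the $p = 2$ convention for the Kronecker symbol fixed in \S\ref{s:notation}. I expect this case analysis, rather than the convolution bookkeeping, to be the main (and minor) obstacle.
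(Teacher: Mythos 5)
Your proof is correct. The paper gives no argument of its own for this lemma---it simply cites \cite[Lemma 3.2]{CE_N}---and the proof there rests on the same ingredients you use, namely Cohen's explicit formula $h_{k-\frac12}(m)=L(2-k,\chi_{D_0})\sum_{e|f}\mu(e)\chi_{D_0}(e)e^{k-2}\sigma_{2k-3}(f/e)$ together with multiplicativity localized at $p$; your Bell-series computation and the case split $p\nmid f'$ versus $p\mid f'$ (where $\left(\frac{-m'}{p}\right)=0$, consistently with the $p=2$ convention of \S\ref{s:notation}) is exactly the required verification, so you have in effect supplied the details the paper outsources.
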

\begin{proof}
See~\cite[Lemma 3.2]{CE_N}.
\end{proof}

\begin{lemma}\label{lem:e_E}
  For $\M = \begin{pmatrix} * & * \\ * & 1  \end{pmatrix} \in L_2^+$
  we set $m = \det(2\M)$.
  Let $D_0$ and $f$ be as above.
  If $k > n' + 3$, then
  \begin{eqnarray*}
 e_{k,\mathcal{M}}^{(n')}(\tau,z)
 &=&
 \sum_{d|f} g_k\!\left(\frac{m}{d^2}\right) E_{k,\mathcal{M}[{W_d}^{-1}]}^{(n')}(\tau,z {^t W_d}) ,
\end{eqnarray*}
where we chose a matrix $W_d \in \Z^{(2,2)}$ for each $d$ which satisfies the conditions
$\det(W_d) = d$,
$\mathcal{M} \left[ {W_d}^{-1} \right] \in  L_2^+$
and $\mathcal{M} \left[ { W_d}^{-1} \right] = \begin{pmatrix} * & * \\ * & 1 \end{pmatrix}$.
If $W_d$ satisfies these conditions, the right hand side of the identity does not depend
on the choice of $W_d$.
\end{lemma}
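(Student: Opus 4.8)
The plan is to prove the identity by comparing Fourier coefficients, after first checking that both sides are Jacobi forms of weight $k$ and index $\M$ of degree $n'$. Throughout, the hypothesis $k > n'+3$ is what guarantees absolute convergence of $E_{k,1}^{(n'+1)}$ and of each $E_{k,\M[W_d^{-1}]}^{(n')}$ (recall $E_{k,\mathcal{N}}^{(n')}$ requires $k > n'+r+1 = n'+3$ when $r=2$).

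First I would verify that the right-hand side is well defined and lands in $J_{k,\M}^{(n')}$. For a holomorphic $\psi$ of index $\mathcal{N}$ and an integral $A$, the substitution $z \mapsto zA$ produces a Jacobi form of index $A\,\mathcal{N}\,{}^t A$; taking $A = {}^t W_d$ and $\mathcal{N} = \M[W_d^{-1}]$ gives ${}^t W_d\,\M[W_d^{-1}]\,W_d = \M$, so each summand $E_{k,\M[W_d^{-1}]}^{(n')}(\tau, z\,{}^t W_d)$ indeed belongs to $J_{k,\M}^{(n')}$ (this is exactly why the admissibility conditions $\det W_d = d$ and $\M[W_d^{-1}]\in L_2^+$ of the prescribed shape are imposed). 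For independence of the choice of $W_d$: two admissible matrices differ by $W_d' = U W_d$ with $U \in GL_2(\Z)$, and then $\M[W_d'^{-1}] = (\M[W_d^{-1}])[U^{-1}]$ while the elliptic variable is further twisted by ${}^t U$; the standard covariance of Jacobi-Eisenstein series under simultaneous transformation of index and elliptic variable shows the summand is unchanged.

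Next I would pass to the half-integral (plus-space) side through the isomorphism $\iota_\M$ of Proposition~\ref{prop:matrix_integer_isom}, where the coefficient bookkeeping is cleanest. By Lemma~\ref{lem:iota_jacobi_eisen} we have $\iota_\M(e_{k,\M}^{(n')}) = e_{k-\frac12,m}^{(n')}$, the $m$-th Fourier-Jacobi coefficient of the degree $n'+1$ Cohen-Eisenstein series. Choosing for each $d$ a representative $W_d$ adapted to Lemma~\ref{lem:iota_u} (permissible by the independence just established), one converts each substitution $z \mapsto z\,{}^t W_d$ into a $U_d$-type operator on the plus-space, so that applying $\iota_\M$ turns the asserted identity into an equality among Fourier-Jacobi coefficients $e_{k-\frac12,m/d^2}^{(n')}$ of the Cohen-Eisenstein series. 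The statement then reduces to matching the Fourier coefficients $C(N',R')$ of these plus-space Jacobi forms, which are governed by the degree-one Cohen numbers $h_{k-\frac12}$ together with explicit local (Siegel-series) factors indexed by the primes dividing the content $f$.

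Finally I would carry out this coefficient comparison. The point is that $g_k(m/d^2) = \sum_{e \mid (f/d)} \mu(e)\, h_{k-\frac12}\!\left(m/(d^2 e^2)\right)$ extracts, by M\"obius inversion, the \emph{primitive} part of the relevant Cohen numbers, and summing over $d \mid f$ reassembles the full coefficient carried by $E_{k,1}^{(n'+1)}$. The arithmetic engine of this reassembly is the recursion of Lemma~\ref{lem:gk}, namely $g_k(p^2 m') = \left(p^{2k-3} - \left(\tfrac{-m'}{p}\right) p^{k-2}\right) g_k(m')$, which matches precisely the local effect of the index-scaling $\M \mapsto \M[W_d^{-1}]$ at each prime dividing $f$. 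I expect the main obstacle to be exactly this last step: showing that the lattice/coset sum defining the index-$\M$ Fourier-Jacobi coefficient of the index-$1$, degree-$(n'+1)$ Eisenstein series reorganizes, prime by prime, into the finite sum over $d \mid f$ of the scaled index-$\M[W_d^{-1}]$ Eisenstein series with weights $g_k(m/d^2)$. The combination of the M\"obius inversion built into $g_k$ and the recursion of Lemma~\ref{lem:gk} should be exactly what closes this combinatorial matching.
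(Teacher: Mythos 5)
Your proposal does not close the one step that constitutes the actual content of this lemma. Note first what the paper does: it gives no internal proof at all, but quotes the result from \cite[Proposition 3.3]{CE_N}, observing that the formula goes back to B\"ocherer \cite[Satz 7]{Bo}. So a genuine blind proof would have to reproduce B\"ocherer's computation: decompose the coset sum defining $E_{k,1}^{(n'+1)}$ along the Fourier--Jacobi direction (in practice via the theta decomposition of \S\ref{s:lin_iso_jacobi}), compute the resulting coefficient functions $f_R$ explicitly --- this is where the local Siegel-series / Cohen-number input enters --- and identify them with the Eisenstein series $E_{k,\M[W_d^{-1}]}^{(n')}$ weighted by $g_k(m/d^2)$. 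Your outline stops exactly at this point: you write that the M\"obius structure of $g_k$ together with the recursion of Lemma~\ref{lem:gk} ``should be exactly what closes this combinatorial matching.'' But Lemma~\ref{lem:gk} is only a recursion among the scalars $g_k(\cdot)$; it carries no information about the Fourier(--Jacobi) coefficients of $E_{k,1}^{(n'+1)}$ or of the index-$\M[W_d^{-1}]$ Eisenstein series, and it cannot by itself produce the identity. (Indeed, in the paper's logical order Lemma~\ref{lem:gk} is used \emph{after} Lemma~\ref{lem:e_E}, in the proof of Proposition~\ref{prop:e_u_tilde}, not to establish it.) The transfer to the plus-space via $\iota_\M$ and Lemma~\ref{lem:iota_jacobi_eisen} is legitimate but merely relocates the problem: you would then need the explicit Fourier coefficients of the generalized Cohen--Eisenstein series $\mathcal{H}_{k-\frac12}^{(n'+1)}$, which is the same nontrivial computation in different clothing. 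As it stands, the core identity is asserted, not proved.

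Two smaller points. Your well-definedness discussion is the right idea, but the assertion that any two admissible matrices satisfy $W_d' = U W_d$ with $U \in GL_2(\Z)$ is not automatic: admissible $W_d$ correspond to overlattices of $\Z^{(2,1)}$ of index $d$ on which the form $\M$ stays half-integral, and you must either prove this overlattice is unique (true, by a local computation prime by prime, using that the lower-right entry of $\M$ is $1$) or argue via the single-$GL_2(\Z)$-orbit structure of matrices $\smat{*}{*}{*}{1} \in L_2^+$ of fixed determinant together with the covariance $E_{k,\mathcal{N}[U]}^{(n')}(\tau,z) = E_{k,\mathcal{N}}^{(n')}(\tau, z\,{}^tU)$; as written this is a gap, though a repairable one. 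Second, your convergence remark is correct ($k > n'+3$ is exactly $k > n'+r+1$ with $r=2$), and the reduction of $z \mapsto z\,{}^tW_d$ to a $U_d$-type operator for lower-triangular $W_d$ matches how the paper itself passes from Lemma~\ref{lem:e_E} to Lemma~\ref{lem:e_E_half} --- but these framing steps are the easy part, and the B\"ocherer computation they frame is missing.
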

\begin{proof}
The reader is referred to~\cite[Proposition 3.3]{CE_N}.
This formula has originally been given in~\cite[Satz 7]{Bo}.
\end{proof}

\begin{df}
  For $\M = \begin{pmatrix} * & * \\ * & 1  \end{pmatrix} \in L_2^+$ we set
  $m = \det(2\M)$.
  We define
  \begin{eqnarray*}
    E_{k-\frac12,m}^{(n')} &:=& \iota_{\M}(E_{k,\M}^{(n')}).
  \end{eqnarray*}
\end{df}

\begin{lemma}\label{lem:e_E_half}
Let the symbols $\M$, $m$ and $f$ be as above.
Then
we have
$E_{k-\frac12,m}^{(n')} \in J_{k-\frac12,m}^{+(n')}$ and
\begin{eqnarray*}
  e_{k-\frac12,m}^{(n')}(\tau,z)
  &=&
   \sum_{d|f} g_k\!\left(\frac{m}{d^2}\right) E_{k-\frac12,\frac{m}{d^2}}^{(n')}(\tau,dz).
\end{eqnarray*}
\end{lemma}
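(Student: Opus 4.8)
The first assertion is immediate from the definitions. Since $E_{k,\M}^{(n')}\in J_{k,\M}^{(n')}$ is a Jacobi--Eisenstein series (here $k>n'+3$ as in Lemma~\ref{lem:e_E}), and since for $r=2$ the index $\mathfrak{M}=\det(2\M)=m$, Proposition~\ref{prop:matrix_integer_isom} shows that $\iota_{\M}$ sends $J_{k,\M}^{(n')}$ into $J_{k-\frac12,\mathfrak{M}}^{+(n')}=J_{k-\frac12,m}^{+(n')}$. Hence $E_{k-\frac12,m}^{(n')}=\iota_{\M}(E_{k,\M}^{(n')})\in J_{k-\frac12,m}^{+(n')}$.

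For the displayed identity the plan is to apply the linear map $\iota_{\M}$ to both sides of Lemma~\ref{lem:e_E}. On the left-hand side Lemma~\ref{lem:iota_jacobi_eisen} gives $\iota_{\M}(e_{k,\M}^{(n')})=e_{k-\frac12,m}^{(n')}$. By linearity of $\iota_{\M}$ it then suffices to prove, for each $d\mid f$, the term-by-term identity
\begin{eqnarray*}
 \iota_{\M}\!\left(E_{k,\M[W_d^{-1}]}^{(n')}(\cdot,\ \cdot\,{}^tW_d)\right)
 &=&
 E_{k-\frac12,\frac{m}{d^2}}^{(n')}(\cdot,\ d\,\cdot),
\end{eqnarray*}
after which multiplying by $g_k(m/d^2)$ and summing yields the lemma. (Note that by the index computation $\psi(\tau,z\,{}^tW_d)\in J_{k,\M}^{(n')}$ for $\psi\in J_{k,\M[W_d^{-1}]}^{(n')}$, so $\iota_{\M}$ is indeed the correct map to apply.)

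To establish this term-by-term identity I would first rewrite the right-hand side as $E_{k-\frac12,m/d^2}^{(n')}|U_d=\iota_{\M[W_d^{-1}]}(E_{k,\M[W_d^{-1}]}^{(n')})\,|\,U_d$, which is legitimate because $\det(2\,\M[W_d^{-1}])=m/d^2$ and the lower-right entry of $\M[W_d^{-1}]$ is $1$, so this is an instance of the definition of $E_{k-\frac12,m/d^2}^{(n')}$. Thus, writing $\psi=E_{k,\M[W_d^{-1}]}^{(n')}$, the desired identity reduces to the compatibility
\begin{eqnarray*}
 \iota_{\M}\!\left(\psi(\cdot,\ \cdot\,{}^tW_d)\right)
 &=&
 \iota_{\M[W_d^{-1}]}(\psi)\,|\,U_d,
\end{eqnarray*}
namely that the substitution $z\mapsto z\,{}^tW_d$ on the integral-weight side descends, through the theta-decomposition maps $\iota$, to the single scaling operator $U_d$ on the plus-space side. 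I would prove this by comparing Fourier coefficients via the explicit theta decomposition defining $\iota_{\M}$, along the lines of the proof of Lemma~\ref{lem:iota_u} and \cite[Proposition 4.3]{CE_N}. Here one takes $W_d$ lower triangular, $W_d=\smat{d}{0}{b}{1}$ with $b$ chosen modulo $d$ so that $\M[W_d^{-1}]$ is half-integral with lower-right entry $1$ (possible since $d\mid f$); this is a permissible representative by the choice-independence asserted in Lemma~\ref{lem:e_E}.

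The main obstacle is exactly this last compatibility. With the lower-triangular $W_d$ the substitution reads $z=(z_1,z_2)\mapsto(dz_1,\ bz_1+z_2)$, so the retained variable $z_1$ is scaled by $d$ while the theta variable $z_2$ is \emph{shifted} by $bz_1$. This is not literally the substitution appearing in Lemma~\ref{lem:iota_u}, where it is instead the retained variable that is shifted by the theta variable, so that lemma cannot be quoted verbatim. The real content is to check that the shift $z_2\mapsto bz_1+z_2$ is absorbed by the theta series together with the plus-space condition, leaving only the scaling $z_1\mapsto dz_1$, that is the operator $U_d$, after passing through $\iota_{\M}$. Once this Fourier-coefficient bookkeeping is carried out, the two displayed identities above follow and the lemma is proved.
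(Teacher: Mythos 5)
Your proof is correct and follows essentially the same route as the paper, whose own proof consists of nothing more than the citation of Proposition~\ref{prop:matrix_integer_isom}, Lemma~\ref{lem:iota_jacobi_eisen} and Lemma~\ref{lem:e_E}. The term-by-term compatibility $\iota_{\M}\bigl(\psi(\cdot,\cdot\,{}^tW_d)\bigr)=\iota_{\M[W_d^{-1}]}(\psi)|U_d$ that you isolate and propose to verify by comparing Fourier coefficients is exactly the intended content of Lemma~\ref{lem:iota_u} (up to the transpose convention in the substitution $z\mapsto z\smat{a}{0}{b}{1}$, which is indeed stated loosely there), so your additional care only makes explicit a step the paper leaves implicit.
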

\begin{proof}
 This lemma follows from Proposition~\ref{prop:matrix_integer_isom},
 Lemma~\ref{lem:iota_jacobi_eisen}
 and Lemma~\ref{lem:e_E}.
\end{proof}

\begin{lemma}\label{lem:E_U_N}
Let the symbols be as above.
We have
\begin{eqnarray*}
  E_{k-\frac12,mN^2}^{(n')}|U_{N}^{*}
  &=&
  N^{-n'} E_{k-\frac12,m}^{(n')}
\end{eqnarray*}
for any natural number $N$,
where $U^*_N$ is the adjoint map of $U_N$ introduced in \S\ref{ss:adjoint_map_of_u_half}. 
\end{lemma}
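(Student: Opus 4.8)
The plan is to reduce the half-integral weight identity to a statement about Jacobi-Eisenstein series of integral weight, and then to establish the latter by a coset computation.

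First I would unwind the definitions. Since $\det\!\left(2\M\left[\smat{N}{0}{0}{1}\right]\right) = mN^2$, I may take $\M' := \M\left[\smat{N}{0}{0}{1}\right]$ as the index matrix representing $mN^2$, so that $E_{k-\frac12,mN^2}^{(n')} = \iota_{\M'}(E_{k,\M'}^{(n')})$ and hence $\iota_{\M'}^{-1}(E_{k-\frac12,mN^2}^{(n')}) = E_{k,\M'}^{(n')}$. Feeding this into Definition~\ref{df:U*_N} gives
\begin{eqnarray*}
  E_{k-\frac12,mN^2}^{(n')}|U_N^*
  &=&
  \iota_{\M}\!\left( E_{k,\M'}^{(n')}|U_{\smat{N}{0}{0}{1}}^* \right).
\end{eqnarray*}
Thus, after applying $\iota_\M$, the lemma is equivalent to the integral-weight identity
\begin{eqnarray*}
  E_{k,\M\left[\smat{N}{0}{0}{1}\right]}^{(n')}|U_{\smat{N}{0}{0}{1}}^*
  &=&
  N^{-n'} E_{k,\M}^{(n')}.
\end{eqnarray*}

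Next I would prove this integral-weight identity directly. Expanding $U_{\smat{N}{0}{0}{1}}^*$ by its definition and $E_{k,\M'}^{(n')}$ as its defining sum over $M \in \Gamma_\infty^{(n')}\backslash\Gamma_{n'}$ and $\lambda \in \Z^{(n',2)}$, I would track the effect of $U_{\smat{N^{-1}}{0}{0}{1}}$ (which rescales the first column of $z$) and of the translations $[((\lambda_1,0),(\mu_1,0)),0_2]$ on each summand. The decisive point is that $U_{\smat{N^{-1}}{0}{0}{1}}$ sends the index $\M'$ back to $\M$, while the outer averaging over $\lambda_1,\mu_1 \in (\Z/N\Z)^{(n',1)}$ together with the inner $\lambda$-summation of $E_{k,\M'}^{(n')}$ should reparametrize into the full $\lambda$-summation defining $E_{k,\M}^{(n')}$. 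Collecting the normalizations then yields the claimed factor $N^{-n'}$.

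The main obstacle is precisely the bookkeeping of these normalizations: the prefactor $N^{-2n'}$ in the definition of $U_{\smat{N}{0}{0}{1}}^*$, the automorphy factor picked up under $U_{\smat{N^{-1}}{0}{0}{1}}$ and the translations, and the multiplicity with which the translated summands reconstruct $E_{k,\M}^{(n')}$ must combine to give exactly $N^{-n'}$. One expects the $\lambda_1$-average to restore the cosets of the first column of $\lambda$ missing after the index reduction, while the $\mu_1$-average supplies a multiplicity $N^{n'}$, so that $N^{-2n'}\cdot N^{n'} = N^{-n'}$. An alternative that sidesteps the coset manipulation is to compare Fourier coefficients of both sides, using the explicit action of $U_{\smat{N}{0}{0}{1}}^*$ on Fourier coefficients and the known Fourier expansion of the Jacobi-Eisenstein series; this reduces the claim to a local identity at each index $(N',R')$.
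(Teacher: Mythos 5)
Your proposal is correct and takes essentially the same route as the paper: the paper also reduces via $\iota_{\M}$ and Definition~\ref{df:U*_N} to the integral-weight identity $E_{k,\M[\smat{N}{0}{0}{1}]}^{(n')}\,\big|\,U^*_{\smat{N}{0}{0}{1}} = N^{-n'} E_{k,\M}^{(n')}$, and proves it by the very coset computation you outline. The only detail you gloss over, which the paper records explicitly, is that moving the translation $[((\lambda',0),(\mu',0)),0_2]$ past $M \in \Gamma_{n'}$ replaces $(\lambda',\mu')$ by $\left(\begin{smallmatrix}\lambda''\\ \mu''\end{smallmatrix}\right) = {}^tM^{-1}\left(\begin{smallmatrix}\lambda'\\ \mu'\end{smallmatrix}\right)$, a bijection on residue pairs mod $N$, after which your count ($\lambda$-average restores the first column of the lattice sum, $\mu$-average contributes multiplicity $N^{n'}$, hence $N^{-2n'}\cdot N^{n'}=N^{-n'}$) goes through exactly as claimed.
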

\begin{proof}
Since
\begin{eqnarray*}
  E_{k-\frac12,mN^2}^{(n')}|U_{N}^{*}
  &=&
  \iota_{\M}\left( \iota_{\M[\smat{N}{0}{0}{1} ]}^{-1}(E_{k-\frac12,mN^2}^{(n')} ) 
  | U^*_{[\smat{N}{0}{0}{1}]}
   \right) \\
  &=&
  \iota_{\M}\left( E_{k,\M[\smat{N}{0}{0}{1} ]}^{(n')} | U^*_{[\smat{N}{0}{0}{1}]} \right),
\end{eqnarray*}
it is enough to show
\begin{eqnarray*}
  E_{k,\M[\smat{N}{0}{0}{1} ]}^{(n')} | U^*_{[\smat{N}{0}{0}{1}]}
  &=&
  N^{-n'} E_{k,\M}^{(n')}.
\end{eqnarray*}
From the definition we have
\begin{eqnarray*}
  E_{k,\M[\smat{N}{0}{0}{1} ]}^{(n')}(\tau,z)
  &=&
  \sum_{M \in \Gamma_{\infty}^{(n')}} \sum_{\lambda \in \Z^{(n',2)}}
  J_{k,\M\smat{N}{0}{0}{1} }\left( 
  \left( [(\lambda, 0),0_2], M \right)
  , (\tau,z )\right)^{-1} \\
  &=&
  \sum_{M \in \Gamma_{\infty}^{(n')}} \sum_{\lambda \in \Z^{(n',2)}}
  J_{k,\M}\left( 
  \left( [(\lambda \smat{N}{0}{0}{1} , 0),0_2], M \right)
  , (\tau,z \smat{N}{0}{0}{1} )\right)^{-1}.
\end{eqnarray*}
We write $X_{\lambda',\mu'} = [((\lambda',0),(\mu',0)),0_2]$ with $\lambda'$, $\mu'$ $\in \Z^{(n',1)}$.
Then
\begin{eqnarray*}
  \left(E_{k,\M[\smat{N}{0}{0}{1} ]}^{(n')}|U_{\smat{N^{-1}}{0}{0}{1} } |_{k,\M}X_{\lambda',\mu'} \right)(\tau,z) 
  &=&
  \sum_{M \in \Gamma_{\infty}^{(n')}} \sum_{\lambda \in \Z^{(n',2)}}
  J_{k,\M}\left( 
  \gamma
  , (\tau,z )\right)^{-1},
\end{eqnarray*}
where we write
$\gamma = \left( [(\lambda \smat{N}{0}{0}{1}+(\lambda'',0) , (\mu'',0)),0_2], M \right)$
and where
\begin{eqnarray*}
  \begin{pmatrix}   \lambda'' \\ \mu'' \end{pmatrix}
  &=&
  ^t {M ^{-1}}
  \begin{pmatrix} \lambda' \\ \mu''  \end{pmatrix}.
\end{eqnarray*}
Thus we obtain
\begin{eqnarray*}
  E_{k,\M[\smat{N}{0}{0}{1} ]}^{(n')} | U^*_{[\smat{N}{0}{0}{1}]}
  &=&
  N^{-2n'}
  \sum_{\lambda', \mu' \in (\Z/ N\Z)^{(n',1)}}
    \left(E_{k,\M[\smat{N}{0}{0}{1} ]}^{(n')}|U_{\smat{N^{-1}}{0}{0}{1} } |_{k,\M}X_{\lambda',\mu'} \right) \\
  &=&
  N^{-n'} E_{k,\M}^{(n')}.
\end{eqnarray*}
\end{proof}

\begin{df}
  Let $N$ be a natural number and let $\nu = \mbox{ord}_p N$ be the largest integer such that
  $p^{\nu} | N$.
  We define
  \begin{eqnarray*}
  \Psi_p^{(n')}(N,X)
  &:=&
  \Psi_{p,D_0}^{(n')}(N,X)
  \ = \  
  \frac{X^{\nu+1} - X^{-(\nu+1)}}{X - X^{-1}}
  -
  \left( \frac{D_0}{p}  \right) p^{-\frac{n'}{2} - \frac12}
  \frac{X^{\nu} - X^{-\nu}}{X - X^{-1}}.
\end{eqnarray*}
\end{df}

\begin{prop}\label{prop:e_u_tilde}
Let the symbols $m$, $D_0$ and $f$ be as in Lemma~\ref{lem:e_E}.
Recall $D_0$ is the fundamental discriminant such that $f = \sqrt{m/|D_0|}$ is a natural number.
If natural number $N$ is coprime to $f$, then we obtain
\begin{eqnarray*}
    e_{k-\frac12,mN^2}^{(n')}|U^*_{N}
  &=&
  N^{(k-\frac{n'}{2}-\frac32)}
  \prod_{p|N}
    \Psi_p^{(n')}(p^{\nu},p^{k-\frac{n'}{2}-\frac32})
  e_{k-\frac12,m}^{(n')},
\end{eqnarray*}
where $\nu = \mbox{ord}_p N$.
\end{prop}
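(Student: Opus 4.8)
The plan is to reduce the identity to the Jacobi--Eisenstein series, for which $U_N^*$ is already understood by Lemma~\ref{lem:E_U_N}, and then to collapse the resulting divisor sum by means of the recursion for $g_k$ in Lemma~\ref{lem:gk}. First I would expand both Fourier--Jacobi coefficients of the generalized Cohen--Eisenstein series through Lemma~\ref{lem:e_E_half}. Since $E_{k-\frac12,m/d^2}^{(n')}(\tau,dz)=(E_{k-\frac12,m/d^2}^{(n')}|U_d)(\tau,z)$ by the definition of $U_d$, that lemma reads
\[
 e_{k-\frac12,m}^{(n')}
 =\sum_{d\mid f} g_k\!\left(\frac{m}{d^2}\right) E_{k-\frac12,\frac{m}{d^2}}^{(n')}\big|U_d .
\]
For the index $mN^2$ the associated conductor is $fN$, and since $(N,f)=1$ every divisor of $fN$ factors uniquely as $dc$ with $d\mid f$ and $c\mid N$. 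Writing $m_d:=m/d^2$, this gives
\[
 e_{k-\frac12,mN^2}^{(n')}
 =\sum_{d\mid f}\sum_{c\mid N} g_k\!\left(m_d\,(N/c)^2\right) E_{k-\frac12,\,m_d(N/c)^2}^{(n')}\big|U_{dc} .
\]

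Next I would apply $U_N^*$ to each summand. As $U_{dc}=U_cU_d$ and $d$ is coprime to $N$, Lemma~\ref{lem:u_adj_2}(3) moves $U_d$ past $U_N^*$, while Lemma~\ref{lem:u_adj_2}(2), applied with base index $m_d$ and with $N/c$ in place of $N$, rewrites $U_c|U_N^*$ as $U_{N/c}^*$. Lemma~\ref{lem:E_U_N} then evaluates $E_{k-\frac12,m_d(N/c)^2}^{(n')}|U_{N/c}^*=(N/c)^{-n'}E_{k-\frac12,m_d}^{(n')}$. After re-indexing by $e=N/c$ this produces
\[
 e_{k-\frac12,mN^2}^{(n')}\big|U_N^*
 =\sum_{d\mid f}\left(\sum_{e\mid N} g_k(m_d e^2)\,e^{-n'}\right) E_{k-\frac12,m_d}^{(n')}\big|U_d .
\]
Comparing with the expansion of $e_{k-\frac12,m}^{(n')}$, it then suffices to show that the inner sum equals $g_k(m_d)$ times a scalar that is independent of $d$.

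To evaluate the inner sum I would use Lemma~\ref{lem:gk}. The key observation is that for any prime $p\mid N$ one has $p\nmid f/d$ because $(N,f)=1$, so from $-m_d=D_0(f/d)^2$ we get $\left(\frac{-m_d}{p}\right)=\left(\frac{D_0}{p}\right)$, which does not depend on $d$; moreover $\left(\frac{-m_d p^{2i}}{p}\right)=0$ for $i\geq 1$. Iterating Lemma~\ref{lem:gk} therefore gives $g_k(m_d p^{2j})=(p^{2k-3})^{j-1}\bigl(p^{2k-3}-\left(\frac{D_0}{p}\right)p^{k-2}\bigr)g_k(m_d)$ for $j\geq 1$, so $g_k(m_d e^2)/g_k(m_d)$ is multiplicative in $e$ with these local factors. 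Consequently the inner sum factors as $g_k(m_d)\prod_{p\mid N}S_p$, with $\nu=\mathrm{ord}_p N$ and, crucially, $S_p$ independent of $d$:
\[
 S_p = 1+\sum_{j=1}^{\nu}(p^{2k-3})^{j-1}\Bigl(p^{2k-3}-\left(\tfrac{D_0}{p}\right)p^{k-2}\Bigr)p^{-jn'} .
\]

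Finally I would verify the algebraic identity $S_p=p^{\nu(k-\frac{n'}{2}-\frac32)}\,\Psi_p^{(n')}\!\bigl(p^{\nu},p^{k-\frac{n'}{2}-\frac32}\bigr)$. Setting $X=p^{k-\frac{n'}{2}-\frac32}$, so that $X^2=p^{2k-3-n'}$ and $\left(\frac{D_0}{p}\right)p^{k-2-n'}=\left(\frac{D_0}{p}\right)p^{-\frac{n'}{2}-\frac12}X$, the geometric sum defining $S_p$ telescopes to $\frac{X^{2\nu+2}-1}{X^2-1}-\left(\frac{D_0}{p}\right)p^{-\frac{n'}{2}-\frac12}X\,\frac{X^{2\nu}-1}{X^2-1}$, and clearing denominators in the definition of $\Psi_p^{(n')}$ shows this equals $X^\nu\Psi_p^{(n')}(p^\nu,X)$. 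Taking the product over $p\mid N$ turns $\prod_p p^{\nu(k-\frac{n'}{2}-\frac32)}$ into $N^{k-\frac{n'}{2}-\frac32}$, and pulling the resulting $d$-independent scalar out of the sum over $d$ recovers precisely $e_{k-\frac12,m}^{(n')}$, giving the claim. I expect the principal difficulty to lie in the bookkeeping of the second paragraph, namely combining and commuting $U_c$, $U_d$ and $U_N^*$ on the correct index spaces, together with the observation that $\left(\frac{-m_d}{p}\right)=\left(\frac{D_0}{p}\right)$ is independent of $d$, since it is exactly this independence that allows a single scalar to be factored out uniformly over all $d\mid f$.
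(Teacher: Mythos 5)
Your proposal is correct and follows essentially the same route as the paper's proof: expanding both sides via Lemma~\ref{lem:e_E_half}, commuting the $U$-operators past $U_N^*$ with Lemma~\ref{lem:u_adj_2}, evaluating on Eisenstein series by Lemma~\ref{lem:E_U_N}, iterating Lemma~\ref{lem:gk} using $\left(\frac{-m_d}{p}\right)=\left(\frac{D_0}{p}\right)$, and closing with the same algebraic identity for $\Psi_p^{(n')}$. The only (cosmetic) difference is that you treat general $N$ in one pass via the factorization of divisors of $fN$, where the paper computes the prime-power case $N=p^{\nu}$ and then appeals to induction in $p$ and $m$.
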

\begin{proof}
We assume that $f$ and $p$ are coprime.
Due to Lemma~\ref{lem:e_E_half}, Lemma~\ref{lem:gk}, Lemma~\ref{lem:u_adj_2}
and Lemma~\ref{lem:E_U_N}, we have
\begin{eqnarray*}
 e_{k-\frac12,mp^{2\nu}}^{(n')} | U_{p^{\nu}}^{*}
 &=&
 \sum_{d|f} \sum_{i=0}^{\nu} g_k\left(\frac{m}{d^2} p^{2(\nu - i )} \right) E_{k-\frac12,\frac{m}{d^2}p^{2(\nu - i)}}^{(n')} |U_{d p^i} | U_{p^{\nu}}^{*}\\
 &=&
 \sum_{d|f} g_k\left(\frac{m}{d^2} \right) \\
 &&
 \left\{ \sum_{i=0}^{\nu-1} 
 \left(p^{(2k-3)(\nu-i)}
   - \left(\frac{D_0}{p}\right)p^{k-2 + (2k-3)(\nu-i-1)}\right)
   E_{k-\frac12,\frac{m}{d^2}p^{2(\nu - i)}}^{(n')} |U_{d} |  U_{p^{\nu-i}}^{*}
 \right. \\
 &&
 \left.
  +  E_{k-\frac12,\frac{m}{d^2}}^{(n')} |U_{d} 
 \right\}\\
 &=&
 \sum_{d|f} g_k\left(\frac{m}{d^2} \right)  E_{k-\frac12,\frac{m}{d^2}}^{(n')} |U_{d} \\
 &&
 \left\{
 1  + \sum_{i=0}^{\nu-1} 
 \left(p^{(2k-3)(\nu-i)}
   - \left(\frac{D_0}{p}\right)p^{k-2 + (2k-3)(\nu-i-1)}\right)
  p^{ - n'(\nu - i)}
 \right\}\\
 &=&
 e_{k-\frac12, m}^{(n')} 
 \left\{ p^{(k-\frac{n'}{2}-\frac32)\nu}
 \frac{p^{(k-\frac{n'}{2}-\frac32)(\nu+1)} - p^{-(k-\frac{n'}{2}-\frac32)(\nu+1)}}{
 p^{(k-\frac{n'}{2}-\frac32)} - p^{-(k-\frac{n'}{2}-\frac32)}} \right.\\
 &&
 \left.
 - \left( \frac{D_0}{p}  \right) p^{k-2-n'+(k-\frac{n'}{2}-\frac32)(\nu-1)}
 \frac{p^{(k-\frac{n'}{2}-\frac32)\nu} - p^{-(k-\frac{n'}{2}-\frac32)\nu}}{
 p^{(k-\frac{n'}{2}-\frac32)} - p^{-(k-\frac{n'}{2}-\frac32)}} 
 \right\}
 .
 \end{eqnarray*}
 Thus we have
 \begin{eqnarray*}
  e_{k-\frac12,mp^{2\nu}}^{(n')} | U_{p^{\nu}}^{*}
  &=&
  p^{(k-\frac{n'}{2}-\frac32)\nu}
  \Psi_p^{(n')}(p^{\nu},p^{k-\frac{n'}{2}-\frac32})
  e_{k-\frac12,m}^{(n')}.
\end{eqnarray*}
By induction with respect to $m$ and $p$, we obtain this proposition.
\end{proof}

\section{Proof of main theorem}\label{s:proof_of_main_theorem}
In this section we shall prove Theorem~\ref{th:main_dshalf}.
Let the symbols $G$ and $\phi_m$ be as in \S\ref{s:intro} and \S\ref{s:Ikeda_lift}.
The adjoint map $U_N^*$ was introduced in Definition~\ref{df:U*_N}.
The index-shift map $D_{2n-2}(N^2)$ was denoted in \S\ref{s:maass_relation}.
The adjoint map $\tilde{D}_{2n-2}(N^2)^*$ of $D_{2n-2}(N^2)$ was obtained
in Proposition~\ref{prop:adjoint_D}.

We write $-m = D_0 f^2$ with a fundamental discriminant $D_0$ and with
a natural number $f$.

\begin{lemma}\label{lem:phi_U*}
We assume that $f$ and $N$ are coprime. Then we have
\begin{eqnarray*}
  \phi_{mN^2}|U^*_{N}
  &=&
  N^{k-n-\frac12} \prod_{p|N} \Psi_p^{(2n-2)}(p^{\nu}, \alpha_p) \phi_m.
\end{eqnarray*}
\end{lemma}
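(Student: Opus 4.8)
The plan is to run the same argument as in Proposition~\ref{prop:e_u_tilde}, with the cusp-form Fourier--Jacobi coefficients $\phi_{mp^{2\nu}}$ playing the role of the Eisenstein coefficients $e_{k-\frac12,mp^{2\nu}}^{(n')}$ and with the generalized Maass relation (Proposition~\ref{prop:maass_relation} together with Proposition~\ref{prop:maass_relation_D}) taking over the structural role played by the explicit expansion of Lemma~\ref{lem:e_E_half}. The crucial difference, which the computation must produce, is that the second argument of $\Psi_p^{(2n-2)}$ becomes the Satake parameter $\alpha_p$ rather than the power $p^{k-n-\frac12}$ of the Eisenstein case.

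First I would reduce to the case $N=p^{\nu}$ of a prime power. Since $U_N = U_{p^{\nu_p}}U_{N/p^{\nu_p}}$ implies that the adjoint maps factor over the primes dividing $N$, and since the factors at distinct primes commute by Lemma~\ref{lem:u_adj_2}(3),(4), one applies the prime-power case successively. At each step the base index passes from $mN^2$ to $m(N/p^{\nu_p})^2$; its associated fundamental discriminant is still $D_0$ and the relevant Satake datum is still $\alpha_p$, so the successive factors $p^{(k-n-\frac12)\nu_p}\,\Psi_p^{(2n-2)}(p^{\nu_p},\alpha_p)$ multiply out to the asserted product. Note also that $-m=D_0 f^2$ with $p\nmid f$ forces $\phi_{m/p^2}=0$, which anchors $\phi_m$ at the bottom of the tower $\{\phi_{mp^{2j}}\}_j$.

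For the prime-power case I would use Proposition~\ref{prop:maass_relation_D} to write $\phi_{mp^{2\nu}}=\phi_m|D_{n'}(p^{2\nu})$ and then apply $U_{p^{\nu}}^{*}$. Expanding $D_{n'}(p^{2\nu})$ through its defining generating series in terms of $\tilde V_{1,2n-3}(p^2)$ and $U_{p}$, I would push $U_{p^{\nu}}^{*}$ to the right using Lemma~\ref{lem:v_tilde_adj_2} (which converts $\tilde V_{1,2n-3}(p^2)$ into its adjoint at the cost of a $U_{p^2}^{*}$) together with the compatibilities of Lemma~\ref{lem:u_adj_2}; alternatively one may invoke Proposition~\ref{prop:adjoint_D} to recognize $U_{p^{2\nu}}^{*}$ inside the adjoint $D_{n'}^{*}(p^{2\nu})$ and then trade $U_{p^{2\nu}}^{*}$ for $U_{p^{\nu}}^{*}$ via Lemma~\ref{lem:u_adj_2}(2). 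Either way, the quadratic Euler factor $I-\hat D_{n'}(p^2)p^{-s}+U_{p^2}p^{2k-3-2s}$ defining $\{\hat D_{n'}(p^{2\delta})\}$ produces a three-term recursion in $\nu$ for the scalars $c_\nu$ determined by $\phi_{mp^{2\nu}}|U_{p^{\nu}}^{*}=c_\nu\,\phi_m$, and solving it yields the Chebyshev-type closed form $c_\nu=p^{(k-n-\frac12)\nu}\,\Psi_p^{(2n-2)}(p^{\nu},\alpha_p)$; here $\alpha_p$ enters precisely through the coefficient $a_1$, hence through $\hat D_{n'}(p^2)$, in the Maass relation.

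The main obstacle is the bookkeeping in this last step: one must verify that $\phi_{mp^{2\nu}}|U_{p^{\nu}}^{*}$ really collapses onto the single line $\C\phi_m$ (so that the scalars $c_\nu$ are well defined) and then pin down the recursion they satisfy. Both points rest on carefully commuting the adjoint maps $U_p^{*}$ past the index-shift map $\tilde V_{1,2n-3}(p^2)$ and the maps $U_p$, while tracking the powers of $p$ and the quadratic relations of Lemma~\ref{lem:DD}(1),(3); the vanishing $\phi_{m/p^2}=0$ starts the induction, exactly as in the proof of Proposition~\ref{prop:maass_relation_D}. Once the recursion is identified, matching it against the two-term definition of $\Psi_p^{(2n-2)}$ is routine.
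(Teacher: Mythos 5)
Your outer scaffolding is fine: the reduction to prime powers via Lemma~\ref{lem:u_adj_2}, the vanishing $\phi_{m/p^2}=0$ when $p\nmid f$, and the expectation of a Chebyshev-type three-term recursion all match the true shape of the statement. But the engine of your argument breaks at exactly the point you flag as ``the main obstacle,'' and it cannot be repaired with the tools you cite. Lemma~\ref{lem:v_tilde_adj_2} and Proposition~\ref{prop:adjoint_D} only \emph{rewrite} adjoints, as $\tilde{V}^*_{1,2n-3}(p^2)=\tilde{V}_{1,2n-3}(p^2)\,U^*_{p^2}$ and $D^*_{n'}(N^2)=D_{n'}(N^2)U^*_{N^2}$; neither evaluates a composition $\tilde{V}_{1,2n-3}(p^2)\,U^*_{p^a}$ on any given form, so ``pushing $U^*_{p^{\nu}}$ to the right'' through $D_{n'}(p^{2\nu})$ is circular. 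Concretely, since $\phi_{mp^2}=\phi_m|\hat{D}_{n'}(p^2)-\left(\frac{-m}{p}\right)p^{k-2}\phi_m|U_p$ and $\phi_m|U_p|U^*_p=\phi_m$, your base scalar $c_1$ is determined by $\phi_m|\tilde{V}_{1,2n-3}(p^2)|U^*_p$, i.e.\ by the Petersson projection of $\phi_{mp^2}$ onto $J^{+(2n-2)\,cusp}_{k-\frac12,m}|U_p$. That is spectral data about $G$ (an eigenvalue of the operator $\tilde{V}_{1,2n-3}(p^2)U_p^*$ at $\phi_m$), and neither it nor even the proportionality $\phi_{mp^{2\nu}}|U^*_{p^{\nu}}\in\C\,\phi_m$ is a formal consequence of the operator identities in Lemmas~\ref{lem:DD}, \ref{lem:u_adj_2}, \ref{lem:v_tilde_adj_2}.

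Moreover, your proposed mechanism for the appearance of $\alpha_p$ --- ``through the coefficient $a_1$'' --- demonstrably cancels. Although $a_1,a_2$ depend on $\alpha_p$, on the tower $\{\phi_{mp^{2j}}\}_j$ the relation (\ref{id:prop_3_4}) says precisely $\phi_{m'}|\hat{D}_{n'}(p^2)=p^{2k-3}\phi_{m'/p^2}|U_{p^2}+p^{k-2}\left(\frac{-m'}{p}\right)\phi_{m'}|U_p+\phi_{m'p^2}$, in which $a_1,a_2$ no longer occur; hence every identity available to your recursion is $\alpha_p$-free, whereas the target scalar genuinely depends on the Hecke eigenvalue, e.g.\ $c_1=p^{k-n-\frac12}\Psi_p^{(2n-2)}(p,\alpha_p)=a_f(p)-\left(\frac{D_0}{p}\right)p^{k-2n}$. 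So no bookkeeping can close the recursion: an input beyond the Maass relation is mandatory. The paper supplies it from the Eisenstein side, where the analogue of your missing base case \emph{is} computable: Lemma~\ref{lem:E_U_N} gives $E^{(n')}_{k-\frac12,mN^2}|U_N^*=N^{-n'}E^{(n')}_{k-\frac12,m}$ by an explicit coset manipulation, Proposition~\ref{prop:e_u_tilde} then yields the asserted identity with $p^{k'-n-\frac12}$ in place of $\alpha_p$ for infinitely many weights $k'$, and the standard Ikeda-lift argument (Fourier coefficients of $e^{(2n-2)}_{k'-\frac12,m}$ and of $\phi_m$ are values of the same Laurent polynomials $\tilde{F}_p(T,X)$ at $X=p^{k'-n-\frac12}$ and $X=\alpha_p$, while $U_N^*$ acts on Fourier expansions weight-independently) upgrades the identity at infinitely many points to a polynomial identity, which one specializes at $X=\alpha_p$. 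To salvage your route you would have to prove directly that $\phi_m$ is an eigenform of $\tilde{V}_{1,2n-3}(p^2)U^*_p$ with the explicit eigenvalue above --- which amounts to redoing this transfer.
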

\begin{proof}
Due to Proposition~\ref{prop:e_u_tilde} we have
\begin{eqnarray*}
    e_{k'-\frac12,mN^2}^{(2n-2)}|U^*_{N}
  &=&
  N^{k'-n-\frac12}
  \prod_{p|N}
    \Psi_p^{(2n-2)}(p^{\nu},p^{k'-n-\frac12})
  e_{k'-\frac12,m}^{(2n-2)}
\end{eqnarray*}
for infinitely many natural number $k'$.
By a standard argument for Ikeda lifts, we obtain this lemma.
\end{proof}

\begin{lemma}\label{lem:phi_m_DD}
We assume that $f$ and $N$ are coprime. Then we have
\begin{eqnarray*}
  &&
   \phi_m|D_{2n-2}(N^2)|D_{2n-2}^*(N^2)  \\
   &=&
     \sum_{d | N} \left(Nd^{-1}\right)^{2k-2n-1}  d^{2k-3}
  \sum_{d_1 | \frac{N}{d}} \mu(d_1) \left( \frac{D_0}{d_1} \right)
  d_1^{n-\frac32}
  \prod_{p | \frac{N^2}{d^2 d_1}} \Psi_p^{(2n-2)}(p^{\nu},\alpha_p)
  \phi_{m}.
\end{eqnarray*}
\end{lemma}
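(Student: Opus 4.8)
Throughout set $n'=2n-2$ and write $D_{n'}(N^2)=D_{n',m}(N^2)$ for the fixed $m$-version of the index-shift map, so that every $D$-operator appearing below is the same one. The plan is to peel off the adjoint via Proposition~\ref{prop:adjoint_D}, expand the resulting self-composition with the multiplication rule of Lemma~\ref{lem:DD}, and then collapse the surviving $U$-operators. First I would apply Proposition~\ref{prop:adjoint_D} to the form $\phi_m|D_{n'}(N^2)=\phi_{mN^2}$ (the equality by Proposition~\ref{prop:maass_relation_D}, legitimate since $\gcd(N,f)=1$), giving
\[
  \phi_m|D_{n'}(N^2)|D_{n'}^*(N^2) \;=\; \phi_m|D_{n'}(N^2)\,D_{n'}(N^2)|U_{N^2}^*.
\]
Since both factors are the same $m$-version, Lemma~\ref{lem:DD}(4) with $M=N$ applies verbatim: writing $M_0:=N^2/(d^2d_1)$,
\[
  D_{n'}(N^2)\,D_{n'}(N^2) \;=\; \sum_{d|N} d^{2k-3}\sum_{d_1|\frac{N}{d}} \mu(d_1)\,d_1^{k-2}\left(\frac{-m}{d_1}\right) U_{d^2 d_1}\,D_{n'}(M_0^2).
\]

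Next, since $D_{n'}(M_0^2)$ commutes with $U_{d^2 d_1}$ (compatibility of the $D$- and $U$-maps) and $\phi_m|D_{n'}(M_0^2)=\phi_{mM_0^2}$ by Proposition~\ref{prop:maass_relation_D}, each summand reduces to $\phi_{mM_0^2}|U_{d^2d_1}|U_{N^2}^*$. Because $d^2 d_1\cdot M_0=N^2$, I would invoke Lemma~\ref{lem:u_adj_2}(2), read with its $N$ taken to be $M_0$ and its $M$ taken to be $d^2d_1$, to collapse the two $U$-operators into one:
\[
  \phi_{mM_0^2}|U_{d^2 d_1}|U_{N^2}^* \;=\; \phi_{mM_0^2}|U_{M_0}^*.
\]
As $M_0\mid N^2$ and $\gcd(N,f)=1$, the pair $(M_0,f)$ is coprime, so Lemma~\ref{lem:phi_U*} evaluates this as $M_0^{\,k-n-\frac12}\prod_{p|M_0}\Psi_p^{(2n-2)}(p^{\nu},\alpha_p)\,\phi_m$ with $\nu=\mbox{ord}_p M_0$.

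Finally I would substitute $M_0=N^2/(d^2d_1)$ and gather constants. The product $\prod_{p|M_0}$ becomes $\prod_{p\mid N^2/(d^2d_1)}\Psi_p^{(2n-2)}(p^{\nu},\alpha_p)$ directly, while the scalar simplifies as $d^{2k-3}d_1^{k-2}M_0^{\,k-n-\frac12}=N^{2k-2n-1}d^{2n-2}d_1^{\,n-\frac32}=(Nd^{-1})^{2k-2n-1}d^{2k-3}d_1^{\,n-\frac32}$; moreover $\left(\frac{-m}{d_1}\right)=\left(\frac{D_0}{d_1}\right)$ because $-m=D_0f^2$ and $\gcd(d_1,f)=1$ force $\left(\frac{f^2}{d_1}\right)=1$. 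Substituting these yields exactly the asserted formula. The only step needing genuine care is the collapse $U_{d^2d_1}|U_{N^2}^*=U_{M_0}^*$: there I must track indices so that Lemma~\ref{lem:u_adj_2}(2) is applied to a form of the correct index $mM_0^2$, and keep straight that every $D$-operator in sight is the single fixed $m$-version (the same subtlety underlying Proposition~\ref{prop:maass_relation_D}); the remaining manipulations are routine bookkeeping of exponents and multiplicativity of the Jacobi symbol.
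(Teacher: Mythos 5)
Your proof is correct and takes essentially the same route as the paper's: replace $D_{2n-2}^*(N^2)$ by $D_{2n-2}(N^2)|U^*_{N^2}$ via Proposition~\ref{prop:adjoint_D}, expand with Lemma~\ref{lem:DD}(4), use Proposition~\ref{prop:maass_relation_D} and the $U$--$D$ compatibility, collapse $U_{d^2d_1}|U^*_{N^2}=U^*_{N^2/(d^2d_1)}$ by Lemma~\ref{lem:u_adj_2}(2), evaluate with Lemma~\ref{lem:phi_U*}, and gather exponents exactly as in the paper. Your explicit verification that $\left(\frac{-m}{d_1}\right)=\left(\frac{D_0}{d_1}\right)$ when $(d_1,f)=1$ is a small detail the paper leaves implicit.
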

\begin{proof}
By virtue of Proposition~\ref{prop:adjoint_D}, Lemma~\ref{lem:DD}(4),
Proposition~\ref{prop:maass_relation_D},
Lemma~\ref{lem:u_adj_2}(2)
and Lemma~\ref{lem:phi_U*},
we obtain
\begin{eqnarray*}
  &&
  \phi_m|D_{2n-2}(N^2)|D_{2n-2}^*(N^2) \\
  &=&
   \phi_m|D_{2n-2}(N^2) | D_{2n-2}(N^2) | U^*_{N^2} \\
  &=&
  \phi_m| \sum_{d | N} d^{2k-3}
  \sum_{d_1 | \frac{N}{d}} \mu(d_1) \left( \frac{D_0}{d_1} \right)
  d_1^{k-2}
  U_{d^2 d_1}
  | D_{2n-2}\left(\frac{N^4}{d^4 d_1^2} \right)
   | U^*_{N^2}\\
  &=&
   \sum_{d | N} d^{2k-3}
  \sum_{d_1 | \frac{N}{d}} \mu(d_1) \left( \frac{D_0}{d_1} \right)
  d_1^{k-2}
  \phi_{\frac{mN^4}{d^4d_1^2}} |U_{d^2 d_1} |  U^*_{N^2}\\
  &=&
   \sum_{d | N} d^{2k-3}
  \sum_{d_1 | \frac{N}{d}} \mu(d_1) \left( \frac{D_0}{d_1} \right)
  d_1^{k-2}
  \phi_{\frac{mN^4}{d^4d_1^2}} | U^*_{\frac{N^2}{d^2 d_1}}\\
  &=&
   \sum_{d | N} d^{2k-3}
  \sum_{d_1 | \frac{N}{d}} \mu(d_1) \left( \frac{D_0}{d_1} \right)
  d_1^{k-2}
  \left( N^2 d^{-2} d_1^{-1} \right)^{k-n-\frac12}
  \prod_{p | \frac{N^2}{d^2 d_1}} \Psi_p^{(2n-2)}(p^{\nu},\alpha_p)
  \phi_{m} \\
  &=&
  \sum_{d | N} \left(Nd^{-1}\right)^{2k-2n-1}  d^{2k-3}
  \sum_{d_1 | \frac{N}{d}} \mu(d_1) \left( \frac{D_0}{d_1} \right)
  d_1^{n-\frac32}
  \prod_{p | \frac{N^2}{d^2 d_1}} \Psi_p^{(2n-2)}(p^{\nu},\alpha_p)
  \phi_{m} .
\end{eqnarray*}
\end{proof}

We shall now prove Theorem~\ref{th:main_dshalf}.
We have
\begin{eqnarray*}
     \sum_{\begin{smallmatrix} m \in \Z_{>0} \\ -m \equiv 0,1 \!\! \mod 4 \end{smallmatrix}}
     \frac{\langle \phi_m, \phi_{m}\rangle}{m^{s+k-n-\frac12}} 
    &=&
    \sum_{D_0} \frac{1}{|D_0|^{s+k-n-\frac12}}
      \sum_{N = 1}^{\infty} \frac{\langle \phi_{|D_0| N^2}, \phi_{|D_0|N^2}\rangle}{N^{2(s+k-n-\frac12)}},
\end{eqnarray*}
where $D_0$ runs over all negative fundamental discriminants.
Due to Proposition~\ref{prop:maass_relation_D},
we have $\phi_{|D_0| N^2} = \phi_{|D_0|}|D_{2n-2}(N^2)$
for any natural number $N$.
Here $D_{2n-2}(N^2)$ is defined for a fixed $|D_0|$.
Thus,  by virtue of Lemma~\ref{lem:phi_m_DD}, we have
\begin{eqnarray*}
  &&
  \sum_{N = 1}^{\infty} \frac{\langle \phi_{|D_0| N^2}, \phi_{|D_0|N^2}\rangle}{N^{2s}} \\
  &=&
  \sum_{N = 1}^{\infty} \frac{\langle \phi_{|D_0|}|D_{2n-2}(N^2), \phi_{|D_0|}|D_{2n-2}(N^2)\rangle}{N^{2s}} \\
  &=&
  \sum_{N = 1}^{\infty} \frac{\langle \phi_{|D_0|}|D_{2n-2}(N^2) D_{2n-2}^*(N^2), \phi_{|D_0|}\rangle}{N^{2s}} \\
  &=&
  \langle \phi_{|D_0|}, \phi_{|D_0|}\rangle \sum_{N = 1}^{\infty}
    \sum_{d | N} \left(\frac{N}{d}\right)^{-2s + 2k-2n-1}  d^{-2s + 2k - 3}  \\
  &&
  \times
  \sum_{d_1 | \frac{N}{d}} \mu(d_1) \left( \frac{D_0}{d_1} \right)
  d_1^{n-\frac32}
  \prod_{p | \frac{N^2}{d^2 d_1}} \Psi_p^{(2n-2)}(p^{\nu},\alpha_p)\\
  &=&
  \langle \phi_{|D_0|}, \phi_{|D_0|}\rangle \zeta(2s-2k+3)\\
  &&
  \times \sum_{N = 1}^{\infty}
    N^{-2s + 2k-2n-1}
  \sum_{d_1 | N} \mu(d_1) \left( \frac{D_0}{d_1} \right)
  d_1^{n-\frac32}
  \prod_{p | \frac{N^2}{d_1}} \Psi_p^{(2n-2)}(p^{\nu},\alpha_p)\\
  &=&
  \langle \phi_{|D_0|}, \phi_{|D_0|}\rangle \zeta(2s-2k+3)\\
  &&
  \times
  \prod_{p} \left\{
    1 +
    \sum_{\delta = 1}^{\infty}
    p^{\delta(-2s + 2k-2n-1)}
    \left(
     \Psi_p^{(2n-2)}(p^{2\delta},\alpha_p)
     - \left( \frac{D_0}{p} \right) p^{n-\frac32} \Psi_p^{(2n-2)}(p^{2\delta -1}, \alpha_p)
    \right)
    \right\} \\
    &=&
    \langle \phi_{|D_0|}, \phi_{|D_0|}\rangle \,  \zeta(2s-2k+3)\\
    &&
    \times
    \prod_p
    \left\{ (1 - \alpha_p^2 p^{-2s + 2k - 2n - 1}) (1 - \alpha_p^{-2} p^{-2s + 2k - 2n - 1}) \right\}^{-1} \\
    &&
    \times
    \left\{
      1 + p^{-2s + 2k - 2n - 1} - \left(\frac{D_0}{p} \right) p^{-2s + 2k - 3n - \frac12}(\alpha_p + \alpha_p^{-1})
    \right. \\
    &&
    \left.
    + \left( \frac{D_0}{p} \right)^2 p^{-2s + 2k - 2n - 2} ( 1 + p^{-2s + 2k - 2n - 1})
    - \left( \frac{D_0}{p} \right) p^{-2s + 2k - n - \frac52} (\alpha_p + \alpha_p^{-1})
    \right\} \\
    &=&
    \langle \phi_{|D_0|}, \phi_{|D_0|}\rangle \, \zeta(2s-2k+3) \, L(2s-2k+2n+1, f, Ad) \\
    &&
    \times
    \prod_p
    \left\{
      \left(1 - p^{-4s+4k-4n-2} \right) \left( 1 + \left( \frac{D_0}{p} \right)^2 p^{-2s + 2k - 2n - 2}\right)
    \right. \\
    &&
    \left.
    \qquad
     - \left( \frac{D_0}{p} \right) \left( 1 - p^{-2s+2k-2n-1} \right)
     \left( 1 + p^{2n-2} \right)
     p^{-2s+2k-3n-\frac12}(\alpha_p + \alpha_p^{-1})
     \right\}\\
     &=&
    \langle \phi_{|D_0|}, \phi_{|D_0|}\rangle \, \zeta(2s-2k+3)
    \, \zeta(4s - 4k + 4n + 2)^{-1}  
     L(2s-2k+2n+1, f, Ad) \\
    &&
    \times
    \prod_{p \nmid D_0}
    \left\{
       1 + p^{-2s + 2k - 2n - 2}
     - \frac{\left( \frac{D_0}{p} \right)
     \left( 1 + p^{2n-2} \right)
     p^{-k+1} a_f(p) }{
      p^{2s-2k+2n+1} + 1
     }
     \right\}.     
\end{eqnarray*}
Therefore we conclude Theorem~\ref{th:main_dshalf}.


\vspace{1cm}

\noindent
Department of Mathematics, Joetsu University of Education,\\
1 Yamayashikimachi, Joetsu, Niigata 943-8512, JAPAN\\
e-mail hayasida@juen.ac.jp

\end{document}